\font\teneufm=eufm10 \font\seveneufm=eufm7
\font\fiveeufm=eufm5
\def\1{\mbox{\bf 1}}
\newtheorem{stheorem}{Theorem}[section]
\newtheorem{question}[stheorem]{Question}
\newtheorem{déf}[stheorem]{Définition}
\newtheorem{rap}[stheorem]{Rappel}
\newtheorem{prop}[stheorem]{Proposition}
\newtheorem{lem}[stheorem]{Lemme}
\newtheorem{coro}[stheorem]{Corollaire}
\newtheorem{thm}[stheorem]{Théorème}
\theoremstyle{remark}
\newtheorem{rmq}[stheorem]{Remarque}
\newcommand{\GG}{\mathbb{G}}
\newcommand{\Ker}{\mathrm{Ker}}
\newcommand{\Spec}{\mathrm{Spec}}
\newcommand{\Specm}{\mathrm{Specm}}
\newcommand{\Aut}{\mathrm{Aut}}
\newcommand{\Gal}{\mathrm{Gal}}
\newcommand{\Kh}{\widehat{K}}
\newcommand{\Rh}{\widehat{R}}
\newcommand{\Kt}{\widetilde{K}}
\newcommand{\Ktnr}{\Kt^{\mathrm{n.r.}}}
\newcommand{\Rt}{\widetilde{R}}
\newcommand{\Rtnr}{\Rt^{\mathrm{n.r.}}}
\newcommand{\Galnr}{\Gamma^\mathrm{n.r.}}
\newcommand{\Gc}{\mathcal{G}}
\newcommand{\Oc}{\mathcal{O}}
\newcommand{\mm}{\mathfrak{m}}
\newcommand{\Xf}{\mathfrak{X}} 
\newcommand{\Ac}{\mathcal{A}} 
\newcommand{\Cc}{\mathcal{C}}
\newcommand{\ImmBT}{\mathscr{B}}
\newcommand{\stacks}[1]{\citepalias[\href{https://stacks.math.columbia.edu/tag/#1}{Tag #1}]{stacks-project}}
\newcommand{\sga}[1]{\citepalias[#1]{SGA3}}
\begin{document}

\title[Arithmétique des schémas en groupes de Bruhat-Tits (cas semi-local)]{Arithmétique des schémas en groupes de Bruhat-Tits sur un anneau de Dedekind semi-local}

\author[A.\ Zidani]{Anis Zidani}
\address{Anis Zidani\\
Christian-Albrechts-Universität zu Kiel\\
Mathematisches Seminar\\
Heinrich-Hecht-Platz 6\\
24118 Kiel, Deutschland\\
and Institute of Mathematics ”Simion Stoilow” of the Romanian Academy\\
21 Calea Grivitei Street\\
010702 Bucharest\\
Romania}

\date{\today}

\begin{abstract} 
L'objectif de cet article est de poser les bases de l'étude cohomologique des schémas en groupes de Bruhat-Tits sur un anneau de Dedekind semi-local. On obtient notamment une preuve simplifiée de la conjecture de Grothendieck-Serre dans ce cas de figure et également un résultat analogue pour les schémas en groupes de Bruhat-Tits d'un groupe semi-simple simplement connexe.
\end{abstract}

\maketitle

\bigskip
\bigskip

\noindent{\bf Mots clés :} Groupes algébriques, Groupes réductifs, Théorie de Bruhat-Tits, Conjecture de Grothendieck-Serre, Torseurs, Modèles entiers, Anneaux de Dedekind semi-locaux, Approximation faible.

\medskip

\noindent{\bf MSC: 20G10, 20G15, 14L10, 14L15}.

\bigskip

\tableofcontents

\section*{Introduction}

Le point de départ de cet article provient de la question posée par Eva Bayer-Fluckiger et Uriya A. First dans \cite{BFF} sur des objets qui généralisent les schémas en groupes de Bruhat-Tits sur des anneaux de Dedekind semi-locaux. 
\medskip

Considérons donc $R$, un anneau de Dedekind semi-local connexe de dimension $1$ et $K$ son corps de fractions. Par définition, un idéal maximal $\mm$ de $R$ définit par localisation un anneau de valuation discrète $R_{\mm}$, de complété noté $\Rh_{\mm}$. Notons également $\Kh_{\mm}$, le corps de fractions de $\Rh_{\mm}$.
\medskip

Introduisons la définition suivante :

\begin{déf}\label{defGroupesBT}
    Soit $G$ un groupe algébrique réductif sur $K$ et $\Gc$ un schéma en groupes lisse sur $R$ tel que $G:=\Gc_{K}$. On dit que $\Gc$ est un \textbf{schéma en groupes stabilisateur d'une facette} (resp. \textbf{schéma en groupes parahorique}) de $G$ si pour tout idéal maximal $\mm$ de $R$, le schéma en groupes $\Gc_{\Rh_{\mm}}$ est stabilisateur d'une facette dans l'immeuble de Bruhat-Tits $\ImmBT(G_{\Kh_{\mm}})$, cf. \cite[Définition 3.9.]{Article1} (resp. est parahorique, cf. \cite[Définition 6.2.]{Article1}).
\end{déf}

Notons que cette définition coïncide avec celle prise par Heinloth dans \cite{Heinloth}, dans le cas semi-simple, et dans le cas où la base est une courbe projective lisse sur un corps. 

Par ailleurs, dans le cas des tores, un schéma en groupes stabilisateur d'une facette peut correspondre au modèle de Néron du tore (sachant que l'immeuble d'un tore est réduit à un sommet). Notons d'ailleurs qu'il s'agit d'un exemple où le modèle considéré n'est pas nécessairement affine.
\medskip

La question de Bayer-Fluckiger et First sur les torseurs rationnellement triviaux s'énonce donc ainsi :

\begin{question}[{\cite[Question 6.4]{BFF}}] \label{QuestionBFF}
    Soit $\Gc$, un schéma en groupes sur $R$ tel que $G:=\Gc_{K}$ est réductif.
    Est-ce que le morphisme de changement de base :
    $$H^1_{\textrm{\upshape ét}}(R,\Gc) \rightarrow H^1_{\textrm{\upshape ét}}(K,G)$$
    est injectif lorsque $\Gc$ est :
    \begin{enumerate}
        \item un schéma en groupes stabilisateur d'une facette de $G$ ?
        \item un schéma en groupes parahorique de $G$ ?
    \end{enumerate}
\end{question}

Dans l'article, les auteurs supposent également que les corps résiduels de $R$ sont parfaits, mais précisent toutefois que cela est simplement une hypothèse simplificatrice.
\medskip

Dans \cite{BFFH}, les mêmes auteurs ont trouvé un contre-exemple dans le cas où le groupe $G$ est non connexe et de composante neutre adjointe. Ce contre-exemple est plus précisément construit en \cite[§4.]{BFFH}. Ceci les a conduit à formuler une conjecture plus faible dans le dernier paragraphe de \cite[§5.]{BFFH} : est-ce que la question (1) de \ref{QuestionBFF} est satisfaite lorsque la facette considérée est une chambre et $G$ est résiduellement quasi-déployé sur chaque $\Kh_\mm$ ? (cf. \cite[Définition 3.4.]{Article1}). Ceci était déjà connu de Bruhat et Tits dans le cas complet à corps résiduel parfait (cf. \cite[3.9. Lemme]{BT3}). On répond positivement à cette conjecture dans cet article. C'est l'objet du théorème \ref{ThmChambres}.
\medskip

Comme signalé dans \cite{Article1}, il s'avère qu'un contre-exemple où $G$ est connexe avait déjà été trouvé pour le cas $(1)$ de la question \ref{QuestionBFF} dans le cas d'un anneau de valuation discrète complet et d'un groupe adjoint quasi-déployé de type ${}^2A_3$ et déployé par une extension non ramifiée par Bruhat et Tits dans \cite[5.2.13.]{BT2}.

Dans l'article \cite{Article1}, on a alors généralisé ce contre-exemple et calculé tous les noyaux possibles dans le cas quasi-déployé et adjoint sur un corps valué hensélien (cf. \cite[Théorème 6.15.]{Article1}). On a également montré que le noyau du morphisme de la question \ref{QuestionBFF} dans le cas $(2)$ est trivial dans ce cas de figure. On se propose dans le présent article de généraliser ces résultats pour n'importe quel groupe $G$ adjoint sur $K$ et quasi-déployé sur chaque $\Kh_{\mm}$ (cf. les théorèmes \ref{ThmStabQSplit} et \ref{ThmParahoQSplit}).

Malgré nos efforts, le cas $(2)$ de la question \ref{QuestionBFF} est toujours une question ouverte.
\medskip

Notons également que cette question \ref{QuestionBFF} est une généralisation de la conjecture de \linebreak Grothendieck-Serre dans le cas d'un anneau de valuation discrète. En effet, il s'agit du cas où le schéma en groupes parahorique est associé à un sommet hyperspécial (dans ce cas, le schéma en groupes est réductif, cf. \cite[Lemme 5.1.]{Article1}).
\medskip

La première tentative de preuve de ce cas est due à Nisnevich dans sa thèse \cite{TheseNisnevich}. L'idée est d'utiliser les techniques de recollements pour montrer que le problème se ramène au cas complet et à un problème de décomposition.

Dans notre cas de figure, en reprenant les notations de la question \ref{QuestionBFF}, un problème de décomposition reviendrait à se demander si l'égalité suivante est satisfaite : $$\prod_{\mm} G(\Kh_\mm)=G(K)\,\prod_\mm \Gc(\Rh_\mm):=\left\{(g_Kg_\mm)_{\mm} \mid (g_K,(g_\mm)_\mm)\in G(K)\times \prod_\mm \Gc(\Rh_\mm)\right\}.$$
Le problème de décomposition de Nisnevich est alors le cas où $\Gc$ est supposé réductif.

Notons d'ailleurs qu'obtenir cette décomposition signifie également que le \textit{groupe de classes} (qui est a priori seulement un ensemble pointé) $c(\Gc):= \prod_\mm\Gc( \Rh_\mm)\backslash \prod_\mm G(\Kh_\mm)/G(K)$ est trivial. Cet objet a aussi été étudié par Nisnevich dans sa thèse (cf. \cite[Chapter I]{TheseNisnevich}).

Ensuite, dans la note \cite{Nisnevich}, Nisnevich apporte des améliorations à sa tentative et indique un résultat de Bruhat et Tits non encore publié à l'époque qui donne le cas semi-simple complet.

Ce résultat (et sa preuve) va ensuite être publié dans \cite[5.2.14. Proposition.]{BT2}, bien qu'il ne soit pas formulé de manière cohomologique. On a montré dans \cite{Article1} que c'est effectivement équivalent à l'énoncé du cas complet, et que le cas réductif peut être également obtenu en ajustant la preuve (cf. \cite[Proposition 5.4.]{Article1}).

Le cas des tores a été ensuite prouvé plus tard par Colliot-Thélène et Sansuc dans \cite[Theorem. 4.1.]{CTS} mais dans un contexte bien plus général. Il s'avère que dans notre contexte on peut en fournir une preuve bien plus simple dans le cas complet : cela est l'objet de \cite[Lemme 5.3.(ii)]{Article1}.

Enfin, Guo dans \cite{Gu} clarifie la preuve de Nisnevich tout en optant cette fois pour une autre preuve du cas complet en passant par une technique de réduction au cas anisotrope. Il ajoute également le cas où l'anneau est de plus semi-local.
\medskip

On propose également dans cet article d'obtenir une preuve simplifiée et nouvelle de ce résultat en obtenant une autre preuve du problème de décomposition, et en combinant cela au cas complet que l'on a déjà traité dans \cite[Proposition 5.4.]{Article1}.
\medskip

Notre objectif principal est donc de répondre de la manière la plus exhaustive possible à la question \ref{QuestionBFF}. Les corps résiduels de $R$ ne sont donc pas supposés parfaits (sauf mention explicite du contraire). 
\medskip

Notre stratégie reprend essentiellement celle de Nisnevich. On utilise les techniques de recollements pour découper le problème en deux : résoudre le cas complet (ce qui a déjà été exploré dans \cite{Article1}) et résoudre un problème de décomposition. Le fait de sortir du cas réductif nécessite toutefois d'utiliser de nouvelles méthodes (ou d'utiliser de manière plus astucieuse celles déjà connues). 
\medskip

Discutons désormais du problème de décomposition. La stratégie que l'on adopte dans ce papier utilise, pour tout $\mm$, le groupe $G(\Kh_\mm)^+$ engendré par les $\Kh_\mm$-points des sous-groupes de racines de $G_{\Kh_\mm}$. Elle repose sur le fait de montrer que $\prod_\mm G(\Kh_\mm)^+ \subset G(K)\,\prod_\mm \Gc(\Rh_\mm)$, ce qui simplifie grandement la problématique, car $\prod_\mm G(\Kh_\mm)^+$ est en pratique suffisamment gros pour conclure sur un certain nombre de cas.

Lorsque $G$ est $K$-isotrope, il était déjà connu dans la littérature que l'on pouvait approcher $\prod_\mm G(\Kh_\mm)^+$ avec des éléments de $G(K)^+$ (cf. \cite[Lemme 5.6.]{BourbakiGille}). Le cas où $G$ est \linebreak $K$-anisotrope et où il existe un $\mm$ tel que $G$ soit $\Kh_\mm$-isotrope est nettement plus délicat et n'a pas été étudié dans la littérature.

L'idée novatrice dans cet article réside alors en l'utilisation du théorème de Prasad  (cf. proposition \ref{Prasad}) pour montrer que $\prod_\mm G(\Kh_\mm)^+ \subset G(K)\,\prod_\mm \Gc(\Rh_\mm)$, et ce, même si $G$ est $K$-anisotrope : le problème de décomposition est ainsi simplifié dans tous les cas de figure.
\medskip

Le plan de cet article est le suivant :
\begin{enumerate}
    \item La première partie est dédiée aux techniques de recollements. On y généralise ce qui a déjà été fait par Nisnevich et Guo pour inclure le cas de schémas en groupes plus généraux non nécessairement affines (en particulier ceux qui nous intéressent).
    \item La seconde partie est dédiée aux techniques d'approximation. On y développe des résultats qui simplifient considérablement l'étude du problème de décomposition. 
    \item La troisième partie est dédiée à l'établissement de lemmes cruciaux et des principaux théorèmes de l'article.
\end{enumerate}
\medskip

On peut déjà annoncer tout de suite que dans le cas où les corps résiduels sont parfaits, et que le groupe $G$ est semi-simple simplement connexe, la question \ref{QuestionBFF} admet une réponse positive :

\begin{thm}\label{CasParfaitSSSC}
    Supposons que les corps résiduels de $R$ soient parfaits. Soit $G$ un groupe semi-simple simplement connexe.
    Alors les schémas en groupes stabilisateur d'une facette et parahoriques pour $G$ coïncident et lorsque $\Gc$ en est un, le morphisme de changement de base :
    $$H^1_{\textrm{\upshape ét}}(R,\Gc) \rightarrow H^1_{\textrm{\upshape ét}}(K,G)$$
    est injectif.
\end{thm}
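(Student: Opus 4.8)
Le plan est de suivre la stratégie de recollement esquissée dans l'introduction, après quelques réductions standards.

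\emph{Réductions préliminaires et coïncidence des deux notions.} Puisque $G$ est semi-simple simplement connexe, on l'écrit comme produit de restrictions à la Weil $\prod_i \mathrm{Res}_{L_i/K}G_i$ de groupes absolument presque simples simplement connexes $G_i$ sur des extensions finies séparables $L_i/K$. La clôture intégrale de $R$ dans $L_i$ est encore un anneau de Dedekind semi-local, et la formation des schémas en groupes parahoriques, les ensembles $H^1_{\text{ét}}$ ainsi que le problème de décomposition sont compatibles aux produits et à la restriction des scalaires ; on se ramène ainsi à $G$ absolument presque simple. D'autre part, la coïncidence des deux notions est locale en $\mm$ : il s'agit de voir que, pour un groupe simplement connexe sur le corps complet $\Kh_\mm$ à corps résiduel parfait, le stabilisateur d'une facette est déjà connexe, son groupe des composantes étant trivial ; il s'identifie donc au schéma en groupes parahorique associé.

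\emph{Réduction au problème de décomposition.} Par les techniques de recollement de la première partie, tout torseur représentant une classe du noyau de $H^1_{\text{ét}}(R,\Gc) \to H^1_{\text{ét}}(K,G)$ est trivial sur chaque $\Rh_\mm$ grâce au cas complet \cite[Proposition 5.4.]{Article1}, et ce noyau s'identifie alors au groupe de classes $c(\Gc) = \prod_\mm \Gc(\Rh_\mm) \backslash \prod_\mm G(\Kh_\mm)/G(K)$. L'injectivité annoncée équivaut donc à la trivialité de $c(\Gc)$, c'est-à-dire à l'égalité $\prod_\mm G(\Kh_\mm) = G(K)\,\prod_\mm \Gc(\Rh_\mm)$.

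\emph{Le problème de décomposition.} En chaque $\mm$, si $G$ est $\Kh_\mm$-anisotrope alors $G(\Kh_\mm)$ est borné et coïncide avec le parahorique $\Gc(\Rh_\mm)$, tandis que $G(\Kh_\mm)^+ = 1$ ; si $G$ est $\Kh_\mm$-isotrope, la conjecture de Kneser-Tits sur le corps local $\Kh_\mm$ (à corps résiduel parfait) donne $G(\Kh_\mm)^+ = G(\Kh_\mm)$. Dans les deux cas on a $G(\Kh_\mm) = G(\Kh_\mm)^+\,\Gc(\Rh_\mm)$, d'où, $G(\Kh_\mm)^+$ étant distingué, l'égalité $\prod_\mm G(\Kh_\mm) = \prod_\mm G(\Kh_\mm)^+\cdot \prod_\mm \Gc(\Rh_\mm)$. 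Comme $G(K)\,\prod_\mm \Gc(\Rh_\mm)$ est stable à droite sous le groupe $\prod_\mm \Gc(\Rh_\mm)$, il suffit d'établir l'inclusion cruciale $\prod_\mm G(\Kh_\mm)^+ \subset G(K)\,\prod_\mm \Gc(\Rh_\mm)$.

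\emph{L'inclusion cruciale et le point dur.} Lorsque $G$ est $K$-isotrope, on approche un élément $(g_\mm)_\mm$ de $\prod_\mm G(\Kh_\mm)^+$ par un $g \in G(K)^+$ assez proche en chaque place pour que $g^{-1}g_\mm$ appartienne au sous-groupe ouvert $\Gc(\Rh_\mm)$ pour tout $\mm$ (approximation faible de $G^+$, cf. \cite[Lemme 5.6.]{BourbakiGille}), puis on écrit $(g_\mm)_\mm = g\cdot (g^{-1}g_\mm)_\mm \in G(K)\,\prod_\mm \Gc(\Rh_\mm)$. Le cas réellement délicat est celui où $G$ est $K$-anisotrope tout en étant $\Kh_\mm$-isotrope pour au moins un $\mm$ : alors $G(K)^+ = 1$ et l'argument d'approximation faible s'effondre. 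C'est ici qu'intervient le théorème de Prasad (proposition \ref{Prasad}), qui fournit l'approximation nécessaire pour établir l'inclusion cruciale même en l'absence de $K$-points unipotents ; c'est l'étape principale et le verrou technique de toute la démonstration.
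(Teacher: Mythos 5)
Your overall architecture (patching to reduce to the complete case plus a decomposition problem, then Prasad's theorem for the decomposition problem) is the same as the paper's, but two steps as you state them are genuinely broken. First, the claim that Kneser--Tits over $\Kh_\mm$ gives $G(\Kh_\mm)^+=G(\Kh_\mm)$ in the isotropic case is false at this level of generality: $\Kh_\mm$ is only a complete discretely valued field with perfect residue field, not a locally compact local field, and Platonov's counterexamples (e.g. $\mathrm{SL}_n(D)$ over $\QQ_p((x))((y))$, whose residue field $\QQ_p((x))$ is perfect) show the Whitehead group need not vanish there. The equality you actually need, $G(\Kh_\mm)=G(\Kh_\mm)^+\,\Gc(\Rh_\mm)$, is obtained in the paper not from Kneser--Tits but from the Bruhat--Tits decomposition $G(\Kh_\mm)=G(\Kh_\mm)^+\,G(\Kh_\mm)_{(\Ac,\Cc)}$ together with the fact that, $G$ being simply connected, the action on the building preserves types, so the pointwise stabilizer of a pair (appartement, chambre) lands in $\Gc(\Rh_\mm)$. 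Second, and more seriously, the ``verrou technique'' you correctly identify is not actually unlocked: Prasad's proposition only says that an \emph{open unbounded} subgroup of $H(L)$ contains $H(L)^+$, so to conclude $\prod_\mm G(\Kh_\mm)^+\subset \overline{G(K)}$ one must exhibit, inside each $\Kh_\mm$-almost-simple isotropic factor $G_{\mm,i}$, an open unbounded subgroup contained in $\overline{G(K)}$. This is the real content of the paper's argument: openness of $\overline{RG(K)}\cap G_{\mm,i}(\Kh_\mm)$ is proved by Raghunathan's trick using a flasque resolution of a maximal torus and weak approximation for quasi-trivial tori, and unboundedness by transporting an isotropic subtorus to one defined over $K$ and using $R$-triviality of $\GG_m$. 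Your proposal invokes Prasad as a black box ``qui fournit l'approximation nécessaire'' without constructing this subgroup, so the $K$-anisotropic case is not established.

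Two further points. Injectivity is not equivalent to the triviality of $c(\Gc)$: since $H^1$ is only a pointed set, triviality of the double quotient controls the fibre over the base point only, and one must run the twisting argument, which requires checking that $\Gc^{\Xf}$ is again a parahoric group scheme of its generic fibre for every class $[\Xf]\in H^1_{\textrm{ét}}(R,\Gc)$ (the paper does this via the stability results of \cite{Article1}); your sentence ``L'injectivité annoncée équivaut donc à la trivialité de $c(\Gc)$'' skips this. Also, the complete case you cite, \cite[Proposition 5.4.]{Article1}, covers the reductive (hyperspecial) situation; for a general parahoric of a simply connected group the relevant input is the quasi-splitness of $G$ over $\Ktnr$ (automatic here since the residue fields are perfect, by \cite[5.1.1.]{BT2}) combined with \cite[Corollaire 4.7.]{Article1} --- this is in fact all the paper's own proof consists of, as it simply specializes its Corollaire \ref{ThmInjSSSC}.
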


Notons également que, bien que l'on ait une preuve essentiellement uniforme, les difficultés que l'on rencontre dans cet article (et plus généralement dans ce sujet), qui sont de nature aussi bien techniques que conceptuelles, sont dues au fait que la théorie de Bruhat-Tits a été peu examinée dans le cas d'un groupe sur un corps complet valué discrètement non quasi-déployé par une extension non ramifiée.
\medskip

\noindent{\bf Remerciements.}
${}$
\medskip

L'auteur remercie Philippe Gille et Ralf Köhl, pour leurs soutiens, leurs accompagnements et leur relecture du présent article. Il remercie également Ofer Gabber pour avoir répondu à la question en \ref{RmqIndQuasiAff}. 

L'auteur est également reconnaissant vis-à-vis de la Studienstiftung des deutschen Volkes pour avoir supporté financièrement ce projet. Il a été également soutenu par le projet “Group schemes, root systems, and related representations”, financé par l'Union Européenne - NextGenerationEU à travers le "Romania’s National Recovery and Resilience Plan" (PNRR) call no. PNRR-III-C9-2023-I8, Project CF159/31.07.2023, et coordonné par le Ministre de la Recherche, de l'Innovation et de la Numérisation (MCID) de Roumanie. 

\section*{Notations et conventions} 

Pour tout corps $k$, la notation $k^s$ désigne une clôture séparable de $k$.
\medskip

Rappelons que tout schéma $X$ localement de présentation finie séparé sur un schéma intègre $S$ de corps de fonctions $k$ est tel que $X(S)\rightarrow X(k)$ est injectif. Cette inclusion est implicite tout le long du document (cf. \cite[Corollary 9.9.]{GortzWedhorn}).
\medskip

Nous utilisons la définition de groupe réductif de Chevalley et Borel (cf. \cite{BorelLinAlgGroups}). En particulier, ils sont affines, lisses et connexes.
\medskip

Le premier ensemble non abélien de cohomologie étale et fppf considéré dans cet article est défini par Milne dans \cite[III.\S 4.]{MilneEtale} par le procédé de Čech. De manière équivalente, ils sont donnés par les classes d'isomorphismes de torseurs faisceautiques, et sont donc a priori non nécessairement représentables par des schémas (cf. \cite[III. Proposition 4.6.]{MilneEtale}).
\medskip

Dans toute la suite on considère $R$ un anneau de Dedekind semi-local connexe de dimension $1$ et $K$ son corps de fractions. Tout ce qui va suivre dans cet article se généralise trivialement au cas non connexe et au cas où une composante est de dimension $0$. 
\medskip

Pour tout idéal maximal $\mm$ dans $R$, notons que $R_{\mm}$ est un anneau de valuation discrète et qu'il munit donc $K$ d'une valuation discrète. On note $\Kh_{\mm}$ et $\Rh_{\mm}$ les complétés associés respectifs de $K$ et $R_{\mm}$. Notons également $R_{\mm}^h$, l'hensélisé de $R_{\mm}$ et $K_{\mm}^h$ son corps de fractions (cf. \stacks{0BSK}).
\medskip

Étant donné un idéal maximal $\mm$ dans $R$, on considère un corps valué par $\mm$, noté $\Kt_{\mm}$, compris entre $K$ et $\Kh_\mm$. Le corps $K$ est donc dense dans $\Kt_{\mm}$ pour la topologie $\mm$-adique. Ce corps est également supposé hensélien. On a donc : $K\subset K_{\mm}^h\subset \Kt_{\mm}\subset \Kh_{\mm}$. Son anneau d'entiers est noté $\Rt_\mm$.

Il arrive parfois dans l'article d'alléger les hypothèses faites sur les $\Kt_{\mm}$. Ceci est alors mentionné explicitement.
\medskip

L'ensemble $\Specm(R)$ désigne le spectre maximal de $R$, c'est-à-dire l'ensemble de ses idéaux maximaux. Observons que dans le cas de $R$, il s'agit également des idéaux premiers non nuls. Notons dans ce cas $\Rt:= \prod_{\mm\in \Specm(R)}\Rt_\mm$ et $\Kt:= \prod_{\mm\in \Specm(R)}\Kt_\mm$.
Pour tout $\mm\in \Specm(R)$, le corps résiduel de $\Rt_{\mm}$ est noté $\kappa_{\mm}$. Notons alors $\kappa:= \prod_{\mm\in \Specm(R)}\kappa_\mm$.
Observons que $R\subset \Rt$ et $K\subset \Kt$ au travers de l'inclusion diagonale. Cette inclusion est implicite tout le long du document.
\medskip

Dans le cas où l'on a $\Rt_{\mm} = \Rh_{\mm}$ pour tout $\mm\in \Specm(R)$, observons que $\Rh:=\Rt$ est aussi le complété de $R$ par son radical de Jacobson (cf. \cite[Theorem 8.15.]{Matsumura}).
\medskip

Notons également $\Ktnr:=\prod_{\mm\in \Specm(R)}\Ktnr_\mm$ et $\Rtnr:=\prod_{\mm\in \Specm(R)}\Rtnr_\mm$ respectivement les produits des extensions maximales non ramifiées et les produits des hensélisés stricts. Posons aussi $\kappa^s:=\prod_{\mm\in \Specm(R)}\kappa_\mm^s$, le produit des clôtures séparables. Notons également $\Galnr:= \prod_{\mm\in \Specm(R)}\Galnr_{\mm}:= \prod_{\mm\in \Specm(R)}\Gal(\Ktnr_\mm/\Kt_\mm)$.
\medskip

Soulignons que l'extension maximale non ramifiée d'un corps complet n'est pas toujours complète. Par exemple, l'extension maximale non ramifiée de $\kappa((t))$ n'est pas $\kappa^s((t))$ si $\kappa^s/\kappa$ est infini.
\medskip

Étant donné un groupe réductif $G$ sur $\Kt$ (ce qui est équivalent à se donner des groupes réductifs sur les corps $\Kt_{\mm}$), on note $\ImmBT(G):=\prod_{\mm\in \Specm(R)}\ImmBT(G_{\Kt_\mm})$, le produit des immeubles de Bruhat-Tits des $G_{\Kt_\mm}$ (ils existent d'après \cite[Proposition 1.1.]{Article1}). Le groupe $G(\Kt)=\prod_{\mm\in \Specm(R)}G(\Kt_{\mm})$ agit naturellement sur $\ImmBT(G):=\prod_{\mm\in \Specm(R)}\ImmBT(G_{\Kt_\mm})$. Une facette (resp. chambre, resp. appartement) dans $\ImmBT(G)$ est le produit de facettes (resp. chambres, resp. appartements) dans chacun des facteurs.

De la même manière, en considérant tout facteur par facteur, on généralise la notion de sous-groupes parahoriques, de sous-groupes stabilisateurs, de schémas en groupes de Bruhat-Tits, etc.
\medskip

Notons aussi que $\Galnr:=\prod_{\mm\in \Specm(R)}\Galnr_{\mm}$ agit naturellement sur $G(\Ktnr):=\prod_{\mm\in \Specm(R)}G(\Ktnr_{\mm})$. 

\pagebreak

\section{Découpage du problème et techniques de recollements}

On se donne comme objectif dans cette partie d'utiliser les techniques de recollements (ou \textit{patching} en anglais) pour séparer le problème qui nous intéresse en deux questions intermédiaires.
\medskip

Plus précisément, on reprend l'idée développée par Nisnevich (\cite{TheseNisnevich}, \cite{Nisnevich}) et Guo (\cite{Gu}). Autrement dit, essayer de se ramener au cas où $R$ est local et complet (ou toute autre situation plus élémentaire) et comprendre l'injectivité dans ce cas de figure. Ceci utilise donc les techniques de recollements.
\medskip

On a par ailleurs fait le choix dans cette section de travailler avec les espaces algébriques au lieu des schémas affines. En effet, comme les techniques de recollements ne sont pas disponibles pour les schémas quelconques, travailler avec les espaces algébriques permet de contourner cette difficulté et d'obtenir tout de même des résultats utiles pour notre problème. Dans une première approche, le lecteur peut donc considérer seulement des schémas affines.
\medskip

Dans cette partie, les corps valués $\Kt_{\mm}$ sont seulement supposés contenir $K$ et avoir les mêmes corps résiduels que $K$ sous les valuations $\mm$-adiques (c'est-à-dire les $\kappa_{\mm}$). Ils ne sont donc ni nécessairement henséliens, ni nécessairement dans $\Kh_\mm$.
\medskip

Soit $\Gc$ un schéma en groupes sur $R$ séparé et localement de présentation finie. Notons également $G:=\Gc_K$.

\begin{question} \label{MainDiagram}
Considérons le diagramme commutatif suivant :
\[\begin{tikzcd}
	{H^1_{*}(R,\Gc)} & {H^1_*(\Rt,\Gc)} & {\displaystyle\prod_{\mm\in \Specm(R)}{H^1_{*}(\Rt_{\mm},\Gc)}} \\
	{H^1_{*}(K,G)} & {H^1_*(\Kt,\Gc)} & {\displaystyle\prod_{\mm\in \Specm(R)}{H^1_{*}(\Kt_{\mm},G)}}
	\arrow[from=1-1, to=1-2]
	\arrow[from=1-1, to=2-1]
	\arrow[Rightarrow, no head, from=1-2, to=1-3]
	\arrow[from=1-2, to=2-2]
	\arrow[from=2-1, to=2-2]
	\arrow[Rightarrow, no head, from=2-2, to=2-3]
\end{tikzcd}\]
avec $*\in \{\mathrm{fppf}, \textrm{\upshape ét}\}$. Quelle est l'obstruction de ce diagramme à être cartésien ?
\end{question}

Faisons un rappel sur les techniques de recollements :

\begin{rap}[Techniques de recollements] \label{Patching}
    Le foncteur suivant est une équivalence de catégories : \upshape
    \begin{align*}
        \left\{
        \begin{gathered}
          \text{Catégorie des } R\text{-espaces }\\
          \text{algébriques séparés et}  \\
          \text{loc. de présentation finie}
        \end{gathered}
        \right\}
        & \rightarrow
        \left\{
        \begin{gathered}
        \text{Catégorie des triplets } (X',\Xf',\tau:X'_{\Kt}\rightarrow\Xf'_{\Kt})\\
        \text{où } X' \text{ (resp. } \Xf' \text{) est un espace algébrique séparé}\\
        \text{loc. de prés. finie sur }K \text{ (resp. } \Rt\text{) et }
         \tau \text{ un isomorphisme}
        \end{gathered}
        \right\}\\
        \Xf &\mapsto \left(\Xf_K,\Xf_{\Rt}, (\Xf_K)_{\Kt}\overset{\sim}{\rightarrow} (\Xf_{\Rt})_{\Kt}\right).
    \end{align*}
\end{rap}

\begin{proof}
    Il s'agit d'une conséquence de \cite[Corollaire 5.6.(1)]{MB}, puisque \linebreak $\Spec(\Rt)\rightarrow \Spec(R)$ est plat d'après \cite[Corollary 1.2.14.]{liu2006algebraic} et induit un isomorphisme au niveau des points fermés, puisque $R$ et $\Rt$ ont les mêmes corps résiduels.
\end{proof}

Par fonctorialité, on vérifie qu'un tel foncteur se restreint et corestreint aux espaces algébriques en groupes. D'après \cite[4.B. Théorème]{EspAlgGrp}, les espaces algébriques en groupes considérés sont représentables par des schémas. On obtient donc :

\begin{prop}
    Le foncteur suivant est une équivalence de catégories : \upshape
    \begin{align*}
        \left\{
        \begin{gathered}
          \text{Catégorie des } R\text{-schémas }\\
          \text{en groupes séparés et}  \\
          \text{loc. de présentation finie}
        \end{gathered}
        \right\}
        & \rightarrow
        \left\{
        \begin{gathered}
        \text{Catégorie des triplets } (G',\mathfrak{G}',\tau:G'_{\Kt}\rightarrow\mathfrak{G}'_{\Kt})\\
        \text{où } G' \text{ (resp. } \mathfrak{G}' \text{) est un schéma en groupes séparé}\\
        \text{loc. de prés. finie sur }K \text{ (resp. } \Rt\text{) et }
         \tau \text{ un isomorphisme}
        \end{gathered}
        \right\}\\
        \mathfrak{G}& \mapsto \left(\mathfrak{G}_K,\mathfrak{G}_{\Rt}, (\mathfrak{G}_K)_{\Kt}\overset{\sim}{\rightarrow} (\mathfrak{G}_{\Rt})_{\Kt}\right).
    \end{align*}
\end{prop}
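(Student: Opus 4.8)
Le plan consiste à déduire cet énoncé de l'équivalence de recollement \ref{Patching} en passant aux objets en groupes. Le foncteur $F$ de \ref{Patching} est donné par les foncteurs de changement de base $(-)_K = (-)\times_R K$ et $(-)_{\Rt} = (-)\times_R \Rt$, accompagnés de l'isomorphisme de comparaison canonique au-dessus de $\Kt$. Comme les produits fibrés commutent au changement de base, $F$ commute aux produits finis et envoie l'objet final sur l'objet final ; étant de plus une équivalence, $F$ préserve et reflète toutes les limites finies existantes. Par conséquent, $F$ transporte les objets en groupes sur les objets en groupes et induit une équivalence entre la catégorie des objets en groupes de la source et celle des objets en groupes du but.

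Il reste à expliciter ces objets en groupes de part et d'autre. Dans la source, un objet en groupes n'est autre qu'un espace algébrique en groupes séparé et localement de présentation finie sur $R$. Dans le but, les produits finis de la catégorie des triplets se calculent composante par composante et l'objet final est $(\Spec K,\Spec \Rt,\id)$ ; un objet en groupes y est donc exactement un triplet $(G',\mathfrak{G}',\tau)$ où $G'$ (resp. $\mathfrak{G}'$) est un espace algébrique en groupes séparé et localement de présentation finie sur $K$ (resp. $\Rt$), et où $\tau$ est un isomorphisme d'espaces algébriques en groupes au-dessus de $\Kt$. La version en groupes de \ref{Patching} fournit alors le foncteur de l'énoncé, mais avec des espaces algébriques en groupes à la place des schémas en groupes.

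Je conclurais en appliquant le théorème de représentabilité \cite[4.B. Théorème]{EspAlgGrp}. Les bases $R$, $K$ et $\Rt$ sont respectivement un anneau de Dedekind, un corps et un produit fini d'anneaux de valuation discrète henséliens, toutes de dimension au plus $1$ ; un espace algébrique en groupes séparé et localement de présentation finie sur une telle base est représentable par un schéma. Ceci remplace simultanément les espaces algébriques en groupes par des schémas en groupes des deux côtés de l'équivalence, l'isomorphisme $\tau$ au-dessus de $\Kt$ devenant alors automatiquement un isomorphisme de schémas, ce qui donne la Proposition.

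Le point délicat sera de vérifier que les hypothèses de \cite[4.B. Théorème]{EspAlgGrp} sont bien satisfaites au-dessus de chacune des trois bases, en particulier au-dessus de $\Rt$ qui est un produit fini d'anneaux de valuation discrète henséliens, ainsi que de s'assurer que les produits de la catégorie des triplets se calculent réellement composante par composante, de sorte qu'un objet en groupes s'y décompose en un triplet d'objets en groupes. Le reste, à savoir le fait qu'un changement de base préserve les produits finis et l'objet final, est formel.
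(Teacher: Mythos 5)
Your proposal is correct and follows essentially the same route as the paper: the paper's proof likewise consists of restricting and corestricting the patching equivalence of \ref{Patching} to group objects by functoriality, then invoking \cite[4.B. Théorème]{EspAlgGrp} to represent the resulting algebraic group spaces by schemes. You merely spell out in more detail the formal steps (preservation of finite products, componentwise computation of products in the category of triples) that the paper leaves implicit.
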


Que dire maintenant des torseurs ? On a besoin de quelques lemmes qui sont d'ailleurs valides sur une base quelconque $S$ (qui est un schéma) ; $\Gc$ est donc supposé être un schéma en groupes sur $S$ non nécessairement séparé, ni nécessairement localement de présentation finie.

\begin{déf}
    On dit que $\Xf$, un faisceau fppf sur $S$, est un pseudo $\Gc$-torseur sur $S$ si $\Xf$ est muni d'une action libre et transitive de $\Gc$. Autrement dit, une action telle que $\Gc\times_S \Xf\rightarrow \Xf \times_S \Xf$, $(g,x)\mapsto (g.x,x)$ est un isomorphisme.
    
    On note $H^1_{\mathrm{Pseudo}}(S,\Gc)$ (resp. $H^1_{\mathrm{SLPF}}(S,\Gc)$) l'ensemble des classes d'isomorphismes de pseudo $\Gc$-torseurs sur $S$ représentables par des espaces algébriques (resp. de pseudo \linebreak $\Gc$-torseurs sur $S$ représentables par des espaces algébriques séparés et localement de présentation finie).

    Par ailleurs, on définit un $\Gc$-torseur sur $S$ pour la topologie fppf (resp. étale) comme étant un faisceau muni d'une action de $\Gc$, localement isomorphe à $\Gc$ muni de son action par translation (à gauche ou à droite selon la convention que l'on prend).

    On note $[\mathfrak{X}]$ la classe d'isomorphisme de $\mathfrak{X}$. Notons qu'il n'y a pas (ni ici, ni dans la suite) d’ambiguïté sur la catégorie ambiante dans notre situation. 
\end{déf}

\begin{rmq}
    Un pseudo $\Gc$-torseur sur $S$ est isomorphe au pseudo torseur trivial ($\Gc$ muni de son action par translation) si et seulement s'il admet une section sur $S$ (cf. \stacks{03AI}).
\end{rmq}

\begin{lem}\label{Torseurs}
    Tout torseur pour la topologie fppf/étale est représentable par un espace algébrique qui est un pseudo-torseur.
    On a donc les inclusions naturelles \linebreak $H^1_{\textrm{\upshape ét}}(S,\Gc)\subset H^1_\mathrm{fppf}(S,\Gc)\subset H^1_{\mathrm{Pseudo}}(S,\Gc)$. Plus précisément :    
    \begin{enumerate}
        \item Si $\Gc$ est plat et localement de présentation finie (resp. et aussi séparé), l'ensemble pointé $H^1_\mathrm{fppf}(S,\Gc)$ est égal à l'ensemble pointé des classes d'isomorphismes d'espaces algébriques pseudo $\Gc$-torseurs sur $S$ fidèlement plats et loc. de présentation finie (resp. et aussi séparé).
        
        \item Si $\Gc$ est lisse (resp. et aussi séparé), l'ensemble pointé $H^1_{\textrm{\upshape ét}}(S,\Gc)$ est égal à l'ensemble pointé des classes d'isomorphismes d'espaces algébriques pseudo $\Gc$-torseurs sur $S$ lisses et surjectifs (resp. et aussi séparé).
    \end{enumerate}
\end{lem}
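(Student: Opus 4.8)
On procéderait de la manière suivante. La pièce maîtresse serait la descente des espaces algébriques pour la topologie considérée : un faisceau qui devient représentable par un espace algébrique après un recouvrement (étale ou fppf) est lui-même représentable par un espace algébrique. On l'appliquerait à la trivialisation locale : si $\Xf$ est un $\Gc$-torseur, il existe un recouvrement $S'\to S$ (étale ou fppf) et un isomorphisme $\Gc$-équivariant $\Xf_{S'}\simeq \Gc_{S'}$ ; comme $\Gc_{S'}$ est un espace algébrique, la descente donnerait la représentabilité de $\Xf$, qui est alors automatiquement un faisceau fppf. Pour voir que $\Xf$ est un pseudo-torseur, on remarquerait que le morphisme $\Gc\times_S\Xf\to \Xf\times_S\Xf$ devient, après le changement de base fidèlement plat $S'\to S$, le morphisme analogue pour le torseur trivial $\Gc_{S'}$, qui est un isomorphisme ; or la propriété d'être un isomorphisme d'espaces algébriques se teste fppf-localement sur le but, d'où la conclusion.

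Les inclusions deviendraient alors formelles. Un recouvrement étale étant fppf, un torseur étale est un torseur fppf, et par ce qui précède un torseur fppf fournit un pseudo-torseur représentable ; les deux flèches $H^1_{\textrm{\upshape ét}}(S,\Gc)\to H^1_\mathrm{fppf}(S,\Gc)\to H^1_{\mathrm{Pseudo}}(S,\Gc)$ sont donc bien définies. Pour l'injectivité, on utiliserait que, $\Xf$ et $\mathfrak{Y}$ étant représentables par des espaces algébriques, les morphismes $\Gc$-équivariants de l'un vers l'autre sont les mêmes dans les topos étale et fppf (Yoneda), et que la propriété d'être un isomorphisme ne dépend pas de la topologie ; comme un isomorphisme de pseudo-torseurs est exactement un isomorphisme équivariant, c'est-à-dire un isomorphisme de torseurs, les deux flèches sont injectives.

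Pour les points (1) et (2), le sens direct résulterait de la descente des propriétés de morphismes. Le morphisme $\Gc\to S$ possédant la section unité, il est surjectif ; s'il est plat et localement de présentation finie (resp. lisse, resp. séparé), il en va de même de $\Gc_{S'}\to S'$, propriétés qui descendent le long du recouvrement fidèlement plat $S'\to S$, de sorte que $\Xf\to S$ est fidèlement plat et localement de présentation finie (resp. lisse et surjectif, resp. séparé). Réciproquement, un pseudo-torseur $\Xf$ dont le morphisme structural est fidèlement plat et localement de présentation finie se trivialise au-dessus de lui-même : le long du recouvrement fppf $\Xf\to S$, on a $\Xf\times_S\Xf\simeq \Gc\times_S\Xf$ de manière équivariante, cet isomorphisme étant précisément celui qui définit la structure de pseudo-torseur ; donc $\Xf$ est un torseur fppf, ce qui donne (1). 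Si de plus $\Xf\to S$ est lisse et surjectif, il admet des sections étale-localement sur $S$ ; $\Xf$ est alors étale-localement trivial, donc un torseur étale, ce qui donne (2). La condition de séparation se lit des deux côtés par descente.

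La principale difficulté conceptuelle résiderait dans l'étape de représentabilité, c'est-à-dire dans l'invocation correcte de la descente des espaces algébriques, qui constitue le seul véritable contenu non formel de l'énoncé. Le reste reposerait sur la réciproque des points (1) et (2) — passer d'un pseudo-torseur à un torseur par trivialisation au-dessus de lui-même — et, pour distinguer la topologie étale de la topologie fppf dans le point (2), sur l'existence de sections étale-locales d'un morphisme lisse et surjectif.
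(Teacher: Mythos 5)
Votre démonstration est correcte et suit essentiellement la même démarche que celle du texte : représentabilité par effectivité de la descente, vérification locale de la propriété de pseudo-torseur, descente des propriétés des morphismes pour le sens direct, et auto-trivialisation du pseudo-torseur au-dessus de lui-même pour la réciproque. La seule différence, purement cosmétique, est que le texte passe explicitement par un atlas étale $U\rightarrow\Xf$ représentable par un schéma pour obtenir le recouvrement trivialisant, alors que vous utilisez directement $\Xf\rightarrow S$ et l'existence de sections étale-locales d'un morphisme lisse surjectif.
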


\begin{proof}
    Notons que les torseurs pour la topologie fppf/étale sont représentables par des espaces algébriques car la descente fppf/étale est toujours effective pour eux (cf. \stacks{0ADV}).

    Montrer que $\Gc\times_S \Xf\rightarrow \Xf \times_S \Xf$ est un isomorphisme peut se faire après localisation fppf/étale. Comme les torseurs fppf/étales sont triviaux fppf/étale localement, on a donc le résultat.

    Comme être plat, localement de présentation finie, lisse ou encore séparé est local pour la topologie fppf ou étale, si $\Gc$ l'est, alors les torseurs fppf ou étales le sont. Notons d'ailleurs que $\Gc$ est toujours surjectif sur $S$ puisque le morphisme $\Gc\rightarrow S$ admet une section.
    \medskip

    Réciproquement, soit $\Xf$ un pseudo $\Gc$-torseur sur $S$. Notons que $\Xf\times_S \Xf\rightarrow \Xf$ est un $\Gc_{\Xf}$-torseur trivial puisqu'il possède une section.

    Considérons maintenant un morphisme étale surjectif $U\rightarrow \Xf$ où $U$ est représentable par un schéma. On en déduit que $(\Xf\times_S \Xf)\times_{\Xf} U=\Xf\times_S U \rightarrow U$ est également un $\Gc_U$-torseur trivial sur $U$.

    Donc $U\rightarrow S$ est un recouvrement trivialisant $\Xf$. Si $\Xf$ est fidèlement plat et localement de présentation finie, $U$ l'est également aussi par composition. Donc $\Xf$ est trivialisé par un recouvrement fppf. De même, si $\Xf$ est lisse et surjectif, $U$ aussi et donc $\Xf$ est trivialisé par un recouvrement étale.
\end{proof}

On en déduit donc le résultat suivant :

\begin{coro}
    Si $\Gc$ est lisse, alors $H^1_{\textrm{\upshape ét}}(S,\Gc)=H^1_\mathrm{fppf}(S,\Gc)$.
\end{coro}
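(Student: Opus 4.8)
The plan is to read the equality straight off Lemma~\ref{Torseurs}. The inclusion $H^1_{\textrm{\upshape ét}}(S,\Gc)\subset H^1_\mathrm{fppf}(S,\Gc)$ is already recorded there, so only the reverse inclusion requires an argument. First I would pick an arbitrary class $[\Xf]\in H^1_\mathrm{fppf}(S,\Gc)$. Since $\Gc$ is smooth it is in particular flat and locally of finite presentation, so part~(1) of Lemma~\ref{Torseurs} applies and represents the class by an algebraic-space pseudo $\Gc$-torseur $\Xf$ over $S$ that is faithfully plat and locally of finite presentation.

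The crux is to upgrade flatness to smoothness, and for this I would exploit the defining isomorphism of a pseudo-torseur, namely $\Gc\times_S\Xf\xrightarrow{\sim}\Xf\times_S\Xf$, $(g,x)\mapsto(g.x,x)$. Composing this isomorphism with the second projection $\mathrm{pr}_2\colon\Xf\times_S\Xf\to\Xf$ recovers exactly the second projection $\mathrm{pr}_2\colon\Gc\times_S\Xf\to\Xf$, which is the base change of the smooth morphism $\Gc\to S$ along $\Xf\to S$, hence smooth. As the isomorphism is in particular smooth, I deduce that $\mathrm{pr}_2\colon\Xf\times_S\Xf\to\Xf$ is smooth. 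But this $\mathrm{pr}_2$ is precisely the base change of the structure morphism $f\colon\Xf\to S$ along $f$ itself, and $f$ is an fppf cover (being faithfully plat and locally of finite presentation by the previous paragraph). Since smoothness descends along fppf covers, I conclude that $f\colon\Xf\to S$ is smooth.

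Finally, $f$ is surjective because it is faithfully plat, so $\Xf$ is a smooth surjective algebraic-space pseudo $\Gc$-torseur; part~(2) of Lemma~\ref{Torseurs} then places $[\Xf]$ in $H^1_{\textrm{\upshape ét}}(S,\Gc)$. This yields $H^1_\mathrm{fppf}(S,\Gc)\subset H^1_{\textrm{\upshape ét}}(S,\Gc)$ and hence the equality. The only genuinely substantive step is the smoothness-descent argument of the second paragraph; everything else is a mechanical application of the two descriptions furnished by Lemma~\ref{Torseurs}. The one point I would take care to check is that fppf descent of smoothness is indeed available in the category of algebraic spaces, so that representing the torseur by an algebraic space rather than by a scheme introduces no difficulty — which it does not, by the standard descent results.
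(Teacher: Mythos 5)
Your proof is correct and follows essentially the same route as the paper: both reduce to showing that an fppf torsor under a smooth $\Gc$ is smooth and surjective via fppf descent of smoothness, then invoke part (2) of Lemma \ref{Torseurs}. The only cosmetic difference is that you descend along the cover $\Xf\to S$ itself (using the pseudo-torsor isomorphism), while the paper descends along an arbitrary trivializing fppf cover; the key idea is identical.
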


\begin{proof}
    Un torseur fppf $\Xf$ est fppf localement isomorphe à $\Gc$. Par descente fppf, $\Xf$ est également lisse et surjectif. D'après le lemme précédent, $[\Xf]\in H^1_{\textrm{\upshape ét}}(S,\Gc)$.
\end{proof}

Revenons maintenant au cas où $S=\Spec(R)$ et $\Gc$ séparé et localement de présentation finie. On peut enfin énoncer les techniques de recollements pour les torseurs :

\begin{prop} \label{PatchingTorseurs}
    Le foncteur suivant est une équivalence de catégories : \upshape
    \begin{align*}
        \left\{
        \begin{gathered}
          \text{Catégorie des } R\text{-esp. alg. }\\
          \text{pseudo }\Gc\text{-torseur sép.}  \\
          \text{et loc. de prés. finie}
        \end{gathered}
        \right\}
        & \rightarrow
        \left\{
        \begin{gathered}
        \text{Catégorie des triplets } (X',\Xf',\tau:X'_{\Kt}\rightarrow\Xf'_{\Kt})\\
        \text{où } X' \text{ (resp. }\Xf'\text{) est un esp. alg. pseudo torseur sur }G\text{ (resp. }\Gc_{\Rt})\\
        \text{sép. loc. de prés. finie sur }K \text{ (resp. } \Rt\text{) et }
         \tau \text{ un isomorphisme}
        \end{gathered}
        \right\}\\
        \Xf &\mapsto \left(\Xf_K,\Xf_{\Rt}, (\Xf_K)_{\Kt}\overset{\sim}{\rightarrow} (\Xf_{\Rt})_{\Kt}\right).
    \end{align*}
    
    \textit{Si de plus $\Gc$ est plat (resp. lisse), alors l'équivalence de catégories précédente en induit également une au niveau des torseurs fppf (resp. torseurs étales).}
\end{prop}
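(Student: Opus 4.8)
Le plan est d'améliorer l'équivalence de catégories du rappel \ref{Patching} (et sa variante pour les schémas en groupes) en une équivalence \emph{équivariante}, puis de vérifier que la condition de pseudo-torseur ainsi que les propriétés géométriques pertinentes se descendent le long du recouvrement $\{\Spec(K),\Spec(\Rt)\}$ de $\Spec(R)$. Je commencerais par observer que ce dernier est bien un recouvrement fpqc : la réunion disjointe $\Spec(K)\sqcup\Spec(\Rt)\to\Spec(R)$ est plate (cf. la preuve de \ref{Patching}), quasi-compacte (la source est affine car $\Specm(R)$ est fini) et surjective (le point générique est atteint par $\Spec(K)$ et les points fermés par $\Spec(\Rt)$). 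Par conséquent, le fait pour un morphisme d'espaces algébriques d'être un isomorphisme, ainsi que les propriétés d'être plat, lisse, localement de présentation finie, séparé ou surjectif, peuvent se tester après ce changement de base.

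Le foncteur est bien défini : si $\Xf$ est un pseudo $\Gc$-torseur séparé et localement de présentation finie, alors $\Xf_K$, $\Xf_{\Rt}$ et l'isomorphisme canonique $\tau$ héritent des structures d'action par changement de base, et comme $G_{\Kt}=\Gc_{\Kt}=(\Gc_{\Rt})_{\Kt}$, $\tau$ est un isomorphisme de pseudo-torseurs. Pour la pleine fidélité, un morphisme de pseudo-torseurs est un morphisme d'espaces algébriques compatible aux actions ; la pleine fidélité de \ref{Patching} donne déjà la bijection sur les morphismes d'espaces algébriques sous-jacents, et la fidélité permet de vérifier l'équivariance facteur par facteur (sur $K$ et sur $\Rt$), d'où la pleine fidélité équivariante.

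Pour l'essentielle surjectivité, je partirais d'un triplet $(X',\Xf',\tau)$ comme dans l'énoncé. En oubliant les actions, \ref{Patching} fournit un espace algébrique $\Xf$ sur $R$ séparé et localement de présentation finie, recollant $X'$ et $\Xf'$. Il reste à recoller les actions : comme le foncteur de \ref{Patching} commute aux produits fibrés (les changements de base les préservent), le triplet associé à $\Gc\times_R\Xf$ est $(G\times_K X',\ \Gc_{\Rt}\times_{\Rt}\Xf',\ \ldots)$, et la paire d'actions $(a_{X'},a_{\Xf'})$, compatible sur $\Kt$ via l'équivariance de $\tau$, définit un morphisme de triplets. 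Par pleine fidélité, elle se relève en un unique morphisme $a\colon\Gc\times_R\Xf\to\Xf$. Les axiomes d'action (associativité, unité) étant des égalités de morphismes vérifiées sur $K$ et $\Rt$, la fidélité les propage à $R$ ; et la condition de pseudo-torseur, qui affirme que $\Gc\times_R\Xf\to\Xf\times_R\Xf$ est un isomorphisme, se teste sur $K$ et $\Rt$ où elle est satisfaite par hypothèse.

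Enfin, pour le raffinement fppf/étale, j'invoquerais le lemme \ref{Torseurs} : $\Gc$ étant séparé et localement de présentation finie, lorsqu'il est de plus plat (resp. lisse), les $\Gc$-torseurs fppf (resp. étales) s'identifient aux pseudo-torseurs séparés fidèlement plats et localement de présentation finie (resp. lisses et surjectifs). Ces propriétés étant fpqc-locales sur la base, un pseudo-torseur recollé est fppf (resp. étale) si et seulement si ses restrictions à $K$ et à $\Rt$ le sont, de sorte que l'équivalence se restreint et corestreint comme annoncé. Le point le plus délicat me semble être la vérification que les actions se recollent correctement, c'est-à-dire l'usage conjoint de la pleine fidélité de \ref{Patching} et de sa compatibilité aux produits fibrés, le reste se ramenant à une application routinière de la descente fpqc le long de $\{\Spec(K),\Spec(\Rt)\}$.
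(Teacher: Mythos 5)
Votre preuve est correcte et suit essentiellement la même démarche que celle de l'article : elle ne fait qu'expliciter ce que l'article résume par « fonctorialité des techniques de recollements » (le recollement des actions via la compatibilité de l'équivalence du rappel \ref{Patching} aux produits fibrés, puis le test des axiomes et de la condition de pseudo-torseur après restriction à $K$ et $\Rt$), et conclut comme l'article par le lemme \ref{Torseurs} et la descente fpqc des propriétés fidèlement plat / lisse et surjectif.
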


\begin{proof}
    Le premier résultat est évident par définition des pseudo torseurs et par fonctorialité des techniques de recollements (rappel \ref{Patching}) : on peut restreindre et corestreindre sans difficulté.

    Pour le second résultat, lorsque $\Gc$ est plat (resp. lisse) on peut également restreindre et corestreindre aux pseudo torseurs qui sont de plus fidèlement plats (resp. lisses et surjectifs). En effet, comme $\Spec(\Rt)\rightarrow \Spec(R)$ est fidèlement plat et quasi-compact, si un pseudo torseur est tel que $\Xf_{\Rt}$ est fidèlement plat (resp. lisse et surjectif), alors $\Xf$ l'est également par descente fpqc.
    
    On a donc le résultat d'après le lemme \ref{Torseurs}.
\end{proof}

Utilisons donc les techniques de recollements pour reformuler notre problème. Nous donnons alors une variante de \cite[Théorème 2.1.]{Nisnevich}, ou encore de \cite[Proposition 10.]{Gu} :

\begin{thm} \label{DoubleQuotientPatching}
    Prenons $*\in \{\mathrm{SLPF}, \mathrm{fppf}, \textrm{\upshape ét}\}$ (supposant de plus que $\Gc$ est plat (resp. lisse) si $*=\mathrm{fppf}$ (resp. $\textrm{\upshape ét}$)). Désignons par $\tau_g : G_{\Kt}\cong G_{\Kt}$ l'isomorphisme de torseurs obtenu en translatant (à gauche) par un élément $g\in G(\Kt)$. 
    L'application $g\mapsto (G,\Gc_{\Rt},\tau_g)$ induit par recollement la bijection d'ensembles pointés suivant :
$${\Gc(\Rt)\backslash G(\Kt)/G(K)}\cong\Ker\left(H^1_*(R,\Gc)\rightarrow H^1_*(K,G)\times_{H^1_*(\Kt,G)}H^1_*(\Rt,\Gc)\right).$$
Par conséquent, on a la suite exacte naturelle :
\[\begin{tikzcd}
	1 & {\Gc(\Rt)\backslash G(\Kt)/G(K)} & 
{H^1_*(R,\Gc)} & 
{H^1_*(K,G)\times_{H^1_*(\Kt,G)}H^1_*(\Rt,\Gc)} & 1.
	\arrow[from=1-2, to=1-3]
	\arrow[from=1-1, to=1-2]
	\arrow[from=1-3, to=1-4]
	\arrow[from=1-4, to=1-5]
\end{tikzcd}\]
\end{thm}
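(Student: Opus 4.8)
Le plan est de déduire l'énoncé de l'équivalence de catégories de la proposition \ref{PatchingTorseurs}, en traduisant la notion de triple recollé en termes de double classe. L'argument est uniforme en $*\in\{\mathrm{SLPF},\mathrm{fppf},\textrm{\upshape ét}\}$ : c'est seulement le sens de $H^1_*$ (via le lemme \ref{Torseurs}) qui change, l'hypothèse de platitude ou de lissité servant à garantir que l'on reste dans la bonne classe de pseudo-torseurs.

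D'abord, je décrirais explicitement le noyau. Un élément de ce noyau est la classe $[\Xf]\in H^1_*(R,\Gc)$ d'un pseudo-torseur $\Xf$ dont l'image dans le produit fibré est le point base, c'est-à-dire tel que $\Xf_K$ soit un $G$-torseur trivial sur $K$ et $\Xf_{\Rt}$ un $\Gc_{\Rt}$-torseur trivial sur $\Rt$. Via l'équivalence de la proposition \ref{PatchingTorseurs}, un tel $\Xf$ correspond à un triple $(\Xf_K,\Xf_{\Rt},\tau)$. En choisissant des trivialisations $\Xf_K\cong G$ et $\Xf_{\Rt}\cong \Gc_{\Rt}$, et en observant que $(\Xf_K)_{\Kt}=G_{\Kt}=(\Gc_{\Rt})_{\Kt}=(\Xf_{\Rt})_{\Kt}$, l'isomorphisme de recollement $\tau$ devient un automorphisme du $G_{\Kt}$-torseur trivial, donc la translation à gauche par un unique $g\in G(\Kt)$. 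Ainsi tout élément du noyau provient d'un triple $(G,\Gc_{\Rt},\tau_g)$, ce qui établit la surjectivité de l'application $g\mapsto (G,\Gc_{\Rt},\tau_g)$ sur le noyau, la double classe triviale s'envoyant sur le point base.

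Ensuite, je déterminerais quand deux éléments $g,g'\in G(\Kt)$ donnent des triples isomorphes, donc la même classe dans $H^1_*(R,\Gc)$. Un isomorphisme de triples est la donnée d'isomorphismes $\alpha\colon G\to G$ sur $K$ et $\beta\colon \Gc_{\Rt}\to \Gc_{\Rt}$ sur $\Rt$, compatibles avec les recollements après extension à $\Kt$. Comme les automorphismes du torseur trivial sont exactement les translations à gauche, $\alpha$ est la translation par un $a\in G(K)$ et $\beta$ par un $b\in \Gc(\Rt)$. La condition de compatibilité $\tau_{g'}\circ \alpha_{\Kt}=\beta_{\Kt}\circ \tau_g$ se calcule alors, dans $G(\Kt)$, comme $g'a=bg$, soit $g'=bga^{-1}$ avec $b\in \Gc(\Rt)$ et $a^{-1}\in G(K)$. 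On obtient ainsi la bijection bien définie $\Gc(\Rt)\backslash G(\Kt)/G(K)\xrightarrow{\sim}\Ker(\cdots)$.

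Pour la suite exacte, l'injectivité de la première flèche et l'exactitude en $H^1_*(R,\Gc)$ résultent directement de cette bijection. La surjectivité de la dernière flèche est une reformulation de la surjectivité essentielle du foncteur de recollement : un élément du produit fibré est représenté par un $G$-torseur $X'$ sur $K$ et un $\Gc_{\Rt}$-torseur $\Xf'$ sur $\Rt$ coïncidant sur $\Kt$ ; le choix d'un isomorphisme $\tau\colon X'_{\Kt}\to \Xf'_{\Kt}$ fournit un triple que l'on recolle en un torseur sur $R$ d'image $([X'],[\Xf'])$. Le point le plus délicat sera de fixer soigneusement les conventions (torseurs à droite, automorphismes donnés par translation à gauche, sens de composition) afin que la condition de compatibilité produise exactement la relation de double classe $g'\in \Gc(\Rt)\,g\,G(K)$, et de justifier proprement les inclusions $G(K)\hookrightarrow G(\Kt)$ et $\Gc(\Rt)\hookrightarrow G(\Kt)$ servant à former cette double classe, qui proviennent de la séparation et de la présentation finie.
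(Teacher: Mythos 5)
Votre démonstration suit essentiellement la même démarche que celle de l'article : description du noyau via l'équivalence de recollement de la proposition \ref{PatchingTorseurs} comme triples $(G,\Gc_{\Rt},\tau_g)$, identification des triples isomorphes par la relation $g'=b\,g\,a^{-1}$ donnant le double quotient, et surjectivité de la dernière flèche par surjectivité essentielle du foncteur de recollement. Le seul point de vigilance (conventions gauche/droite pour les translations), que vous signalez vous-même, est également traité dans l'article par une remarque indiquant que les deux doubles quotients sont en bijection via $g\mapsto g^{-1}$.
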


\begin{proof}
    On a un morphisme naturel $H^1_*(R,\Gc)\rightarrow H^1_*(K,G)\times_{H^1_*(\Kt,G)}H^1_*(\Rt,\Gc)$ donné par $[\Xf]\rightarrow ([\Xf_{\Rt}],[\Xf_{\Kt}])$. Ce morphisme est en fait surjectif. En effet, prenons $([X],[\Xf'])$ dans le produit fibré. Par définition, $X$ et $\Xf'$ ont même classe dans $H^1_*(\Kt,G)$. Cela signifie qu'il existe un isomorphisme de torseurs $\tau : X_{\Kt}\rightarrow \Xf'_{\Kt}$. On peut donc utiliser les techniques de recollements (proposition \ref{PatchingTorseurs}) pour obtenir un $R$-torseur $\Xf$ sur $\Gc$ qui recolle $X$ et $\Xf'$ et donc tel que $[\Xf]$ s'envoie sur $([X],[\Xf'])$ comme souhaité. D'où la surjectivité.
    \medskip

    Que peut-on dire du noyau de ce morphisme ? On recherche donc les $R$-torseurs sur $\Gc$ qui sont triviaux sur $\Rt$ et $\Kt$, à isomorphisme près. D'après les techniques de recollements, cela revient à comprendre les triplets de la forme $(G,\Gc_{\Rt},\tau: G_{\Kt}\overset{\sim}{\rightarrow}(\Gc_{\Rt})_{\Kt}=G_{\Kt})$ à isomorphisme près. L'isomorphisme $\tau$ est d'ailleurs déterminé par l'image de l'élément neutre qui est un élément de $G(\Kt)$. Réciproquement, tout élément $g\in G(\Kt)$ détermine un isomorphisme $\tau_g$ en translatant par cet élément. Les triplets que l'on cherche sont donc exactement déterminés par un élément de $G(\Kt)$.
    \medskip

    Comprenons maintenant les triplets isomorphes. Un triplet $(G,\Gc_{\Rt},\tau_{\widehat{g}})$ est isomorphe à un triplet $(G,\Gc_{\Rt},\tau_{\widehat{g}'})$ si et seulement s'il existe $g\in G(K)$ et $p\in \Gc(\Rt)$ tel que le carré suivant commute :
    \[\begin{tikzcd}
    	{G_{\Kt}} & {({\Gc_{\Rt}})_{\Kt}} \\
    	{G_{\Kt}} & {({\Gc_{\Rt}})_{\Kt}}
    	\arrow["{\tau_{\widehat{g}}}", from=1-1, to=1-2]
    	\arrow["{\tau_g}"', from=1-1, to=2-1]
    	\arrow["{\tau_p}", from=1-2, to=2-2]
    	\arrow["{\tau_{\widehat{g'}}}"', from=2-1, to=2-2]
    \end{tikzcd}\]
    En d'autres termes, $\tau_{\widehat{g}'}=\tau_{p}\circ \tau_{\widehat{g}} \circ \tau_{g}^{-1}$.
    
    En évaluant en l'élément neutre, on a alors $\widehat{g}'=p\,\widehat{g}\,g^{-1}$, supposant que l'on manipule des torseurs à gauche. Les classes d'isomorphismes sont donc données par $\Gc(\Rt)\backslash G(\Kt)/G(K)$.
\end{proof}

\begin{rmq}
    Remarquons que ${\Gc(\Rt)\backslash G(\Kt)/G(K)}$ est en bijection d'ensembles pointés avec $G(K)\backslash G(\Kt)/{\Gc(\Rt)}$ grâce à $g\mapsto g^{-1}$. On obtient en fait l'un ou l'autre ensemble par les calculs précédents en fonction de si l'on souhaite travailler avec des torseurs à gauche ou à droite. Ce choix n'a aucune importance.
\end{rmq}

\begin{rmq}
    Il est intéressant de noter que, lorsque $\Gc$ est affine, $G$ est lisse, et $\Kt=\Kh$, le double quotient $\Gc(\Rh)\backslash G(\Kh)/G(K)$ est isomorphe à $H^1_{\mathrm{Nis}}(R,\Gc)$ d'après \cite[2.8. Theorem]{TheseNisnevich}. Si $\Gc$ est de plus plat, alors d'après \cite[1.3. Proposition]{TheseNisnevich}, on a \linebreak ${\Gc(R^h)\backslash G(K^h)/G(K)}={\Gc(\Rh)\backslash G(\Kh)/G(K)}$.
\end{rmq}

Notons que dire que le diagramme de la question \ref{MainDiagram} est cartésien est équivalent à dire que l'on a une bijection d'ensembles pointés $H^1(R,\Gc)\overset{\sim}{\rightarrow} H^1(\Rt,\Gc)\times_{H^1(\Kt,G)}H^1(K,G)$. \linebreak En particulier, $\Ker\left(H^1(R,\Gc)\rightarrow H^1(\Rt,\Gc)\times_{H^1(\Kt,G)}H^1(K,G)\right)$, donc ${\Gc(\Rt)\backslash G(\Kt)/G(K)}$, doit être trivial.
\medskip

Rappelons cependant que l'on manipule des ensembles pointés et non des groupes a priori. Par conséquent, le noyau ne suffit pas à comprendre les fibres du morphisme. 

Toutefois, les techniques dites de torsion (ou \textit{twist} en anglais) permettent de comprendre ses fibres. Supposons désormais à partir de maintenant que $\Gc$ est plat. Il peut alors être identifié avec le faisceau fppf qu'il représente. Par ailleurs, dans toute la suite, on prend $*\in \{\mathrm{fppf}, \textrm{\upshape ét}\}$ (en supposant que $\Gc$ est de plus lisse si $*=\textrm{\upshape ét}$). Faisons quelques rappels :
\medskip

Prenons un torseur $\Xf$ avec donc $[\Xf] \in H^1_*(R,\Gc)$, et considérons le groupe tordu de $\Gc$ par $\Xf$ par automorphismes intérieurs, noté $\Gc^\Xf$ (cf. \cite[2.1.]{GilleSemisimpleSchémas}). C'est une forme fppf (ou étale) sur $R$ de $\Gc$ telle que sa classe dans $H^1_*(R,\Aut(\Gc)))$ est donnée par l'image de $[\Xf]$ par l'application naturelle $H^1_*(R,\Gc)\rightarrow H^1_*(R,\Aut(\Gc)))$ (cf. \cite[Chapitre III, Corollaire 2.5.4.]{Giraud}). En conséquence, deux torseurs isomorphes induisent des tordus isomorphes.

Il est aussi tel qu'il existe une bijection canonique $\varphi_\Xf$ de $H^1_*(R,\Gc^\Xf)$ dans $H^1_*(R,\Gc)$ qui envoie la classe du torseur trivial vers $[\Xf]$ (cf. \cite[Chapitre III, 2.6.]{Giraud}). Notons par ailleurs que tordre $\Gc^\Xf$ par un torseur $\mathfrak{Y}$ avec donc $[\mathfrak{Y}] \in H^1_*(R,\Gc^\Xf)$ donne à isomorphisme près le même groupe que si l'on tordait $\Gc$ par un torseur dans la classe $\varphi_\Xf([\mathfrak{Y}]) \in H^1_*(R,\Gc)$.

Enfin, observons que grâce à \cite[4.A. Théorème]{EspAlgGrp}, un tordu fppf/étale d'un schéma en groupes sur $R$, plat, séparé et loc. de présentation finie (qui, par descente, est un $R$-espace algébrique en groupes plat, séparé et loc. de présentation finie) est en fait représentable par un $R$-schéma en groupes. Dans la suite, on peut alors réutiliser pour les tordus de $\Gc$ ce que l'on a déjà fait.
\medskip 

De ceci, on en déduit les lemmes suivants :

\begin{lem}\label{TwistEtFibresKer}
    Soit $[\Xf] \in H^1_*(R,\Gc)$. On a : $$\Ker\left(H^1_*(R,\Gc^\Xf)\rightarrow H^1_*(K,\Gc^\Xf)\right)\cong g^{-1}(g([\Xf])),$$ où $g$ désigne $H^1_*(R,\Gc)\rightarrow H^1_*(K,G)$.
\end{lem}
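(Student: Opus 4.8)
The plan is to run the standard torsion argument built on the canonical bijection $\varphi_\Xf$ recalled just above, the crucial feature being that $\varphi_\Xf$ sends the base point (the trivial torsor) to $[\Xf]$ rather than to a base point; it will therefore trade a \emph{kernel} on the side of $\Gc^\Xf$ against a \emph{fibre} on the side of $\Gc$.

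First I would identify the generic fibre of the twisted group scheme. Since twisting by inner automorphisms commutes with any base change, one has $(\Gc^\Xf)_K = (\Gc_K)^{\Xf_K} = G^{\Xf_K}$, where $\Xf_K$ denotes the restriction of $\Xf$ to $K$, so that in particular $[\Xf_K] = g([\Xf])$. The arrow occurring in the statement is thus the restriction map $g^\Xf \colon H^1_*(R,\Gc^\Xf) \to H^1_*(K,G^{\Xf_K})$ for the twisted group, and by definition $\Ker(g^\Xf) = (g^\Xf)^{-1}(*)$, where $*$ denotes the class of the trivial torsor.

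Next I would assemble the commutative square
\[
\begin{CD}
H^1_*(R,\Gc^\Xf) @>{g^\Xf}>> H^1_*(K,G^{\Xf_K})\\
@V{\varphi_\Xf}VV @VV{\varphi_{\Xf_K}}V\\
H^1_*(R,\Gc) @>{g}>> H^1_*(K,G),
\end{CD}
\]
whose vertical maps are the canonical bijections of \cite[Chapitre III, 2.6.]{Giraud}, with $\varphi_\Xf(*) = [\Xf]$ and $\varphi_{\Xf_K}(*) = [\Xf_K] = g([\Xf])$. Because both $\varphi_\Xf$ and $\varphi_{\Xf_K}$ are bijections, $\varphi_\Xf$ carries $(g^\Xf)^{-1}(S)$ bijectively onto $g^{-1}(\varphi_{\Xf_K}(S))$ for every subset $S$ of the target. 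Taking $S = \{*\}$ and using $\varphi_{\Xf_K}(*) = g([\Xf])$ then yields $\varphi_\Xf\bigl(\Ker(g^\Xf)\bigr) = g^{-1}(g([\Xf]))$, which is precisely the asserted bijection.

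The only point that genuinely requires verification is the commutativity of this square, i.e.\ that twisting and then restricting to $K$ agrees, through the bijections $\varphi$, with restricting and then twisting by $\Xf_K$. I expect this to be a formal consequence of the functoriality of Giraud's twisting construction with respect to the base change $\Spec(K) \to \Spec(R)$, exactly the compatibility already invoked in the discussion preceding the statement.
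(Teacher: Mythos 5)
Your proposal is correct and is essentially the paper's own argument: the paper also exhibits the commutative square with the twisting bijections as vertical arrows (phrased as a morphism of exact sequences of pointed sets, with $[\Xf]$ and $[\Xf_K]$ taken as the new base points) and reads off the identification of the kernel with the fibre $g^{-1}(g([\Xf]))$. The commutativity you flag as the one point to check is likewise taken as a formal consequence of the functoriality of the twisting construction under base change, exactly as in the discussion preceding the statement.
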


\begin{proof}
    On a le diagramme commutatif suivant à flèches verticales bijectives :
    \[\begin{adjustbox}{max size={1\textwidth}{1\textheight}}
    \begin{tikzcd}
    	1 & {\Ker\left(H^1_*(R,\Gc^\Xf)\rightarrow H^1_*(K,\Gc^\Xf)\right)} & {H^1_*(R,\Gc^\Xf)} & {H^1_*(K,\Gc^\Xf)} & 1 \\
    	1 & {g^{-1}(g([\Xf]))} & {H^1_*(R,\Gc)} & {H^1_*(K,G)} & 1.
    	\arrow[from=1-1, to=1-2]
        \arrow[from=1-2, to=2-2]
    	\arrow[from=1-2, to=1-3]
    	\arrow[from=1-3, to=1-4]
    	\arrow["{1\mapsto [\Xf]}", from=1-3, to=2-3]
    	\arrow[from=1-4, to=1-5]
    	\arrow["{1\mapsto [\Xf_K]}", from=1-4, to=2-4]
    	\arrow[from=2-1, to=2-2]
    	\arrow[from=2-2, to=2-3]
    	\arrow["f",from=2-3, to=2-4]
    	\arrow[from=2-4, to=2-5]
    \end{tikzcd}
    \end{adjustbox}\]
   La première ligne est exacte. La seconde ligne l'est également en choisissant $[\Xf]$ et $[\Xf_K]$ comme éléments neutres. D'où le résultat.    
\end{proof}

\begin{lem}\label{TwistEtFibres}
    Soit $[\Xf] \in H^1_*(R,\Gc)$. On a : $${(\Gc^\Xf)(\Rt)\backslash (\Gc^\Xf)(\Kt)/(\Gc^\Xf)(K)}\cong f^{-1}(f([\Xf])),$$ où $f$ désigne $H^1_*(R,\Gc)\rightarrow H^1_*(\Rt,\Gc)\times_{H^1_*(\Kt,G)}H^1_*(K,G)$.
\end{lem}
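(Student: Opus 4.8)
The plan is to mirror the proof of Lemma \ref{TwistEtFibresKer}, but with the map $g$ replaced by $f$ and the target $H^1_*(K,G)$ replaced by the fibre product, and then to feed the resulting kernel into Theorem \ref{DoubleQuotientPatching} applied to the twisted group scheme $\Gc^\Xf$. So I would first introduce the analogue of $f$ attached to $\Gc^\Xf$,
\[
f^\Xf : H^1_*(R,\Gc^\Xf)\longrightarrow H^1_*(\Rt,\Gc^\Xf)\times_{H^1_*(\Kt,\Gc^\Xf)}H^1_*(K,\Gc^\Xf),
\]
and show that the twisting bijection $\varphi_\Xf$ carries $\Ker(f^\Xf)$ onto the fibre $f^{-1}(f([\Xf]))$.

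For this I would write down the commutative square with bijective vertical arrows
\[
\begin{tikzcd}
	{H^1_*(R,\Gc^\Xf)} & {H^1_*(\Rt,\Gc^\Xf)\times_{H^1_*(\Kt,\Gc^\Xf)}H^1_*(K,\Gc^\Xf)} \\
	{H^1_*(R,\Gc)} & {H^1_*(\Rt,\Gc)\times_{H^1_*(\Kt,G)}H^1_*(K,G)}
	\arrow["{f^\Xf}", from=1-1, to=1-2]
	\arrow["{\varphi_\Xf}"', from=1-1, to=2-1]
	\arrow[from=1-2, to=2-2]
	\arrow["f"', from=2-1, to=2-2]
\end{tikzcd}
\]
whose left vertical arrow sends the trivial class to $[\Xf]$. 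The right vertical arrow is built from the three twisting bijections over $\Rt$, $K$ and $\Kt$: since the formation of $\Gc^\Xf$ commutes with the base changes $R\to\Rt$, $R\to K$, $\Rt\to\Kt$ and $K\to\Kt$, these bijections are compatible with the restriction maps and therefore assemble into a bijection of the two fibre products sending the distinguished point to $f([\Xf])=([\Xf_{\Rt}],[\Xf_K])$. Once the square commutes, the same argument as in Lemma \ref{TwistEtFibresKer} shows that $\varphi_\Xf$ restricts to a bijection $\Ker(f^\Xf)\xrightarrow{\sim} f^{-1}(f([\Xf]))$, because $\Ker(f^\Xf)$ is precisely the fibre of $f^\Xf$ over the distinguished point.

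It then remains to identify $\Ker(f^\Xf)$. As recalled just before the statement, $\Gc^\Xf$ is again a flat, separated $R$-group scheme, locally of finite presentation, and smooth when $*=\textrm{\upshape ét}$, so Theorem \ref{DoubleQuotientPatching} applies to it verbatim and yields
\[
\Ker(f^\Xf)\cong (\Gc^\Xf)(\Rt)\backslash (\Gc^\Xf)(\Kt)/(\Gc^\Xf)(K),
\]
where I use that $(\Gc^\Xf)_K(\Kt)=(\Gc^\Xf)(\Kt)$ and $(\Gc^\Xf)_K(K)=(\Gc^\Xf)(K)$. Composing the two bijections then gives the claim.

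I expect the one genuinely substantive point to be the compatibility of the twisting bijections $\varphi_\Xf$ with the restriction maps defining the fibre products, i.e.\ that the right vertical arrow above is well defined and bijective. This is formal — it follows from the functoriality with respect to base change of the twisting construction of \cite[Chapitre III, 2.6.]{Giraud} — but it is the hinge on which the whole argument turns; everything else is a transcription of Lemma \ref{TwistEtFibresKer} together with the application of Theorem \ref{DoubleQuotientPatching} to $\Gc^\Xf$.
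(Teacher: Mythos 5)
Your proof is correct and follows essentially the same route as the paper: the paper's own argument is precisely the commutative diagram with bijective vertical twisting arrows comparing the exact sequence of Theorem \ref{DoubleQuotientPatching} for $\Gc^\Xf$ with the one for $\Gc$ re-pointed at $[\Xf]$ and $([\Xf_{\Rt}],[\Xf_K])$. Your additional remarks on the base-change compatibility of the twisting bijections only make explicit what the paper leaves implicit.
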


\begin{proof}
    On a le diagramme commutatif suivant à flèches verticales bijectives :
    \[\begin{adjustbox}{max size={1\textwidth}{1\textheight}}
    \begin{tikzcd}
    	1 & {(\Gc^\Xf)(\Rt)\backslash (\Gc^\Xf)(\Kt)/(\Gc^\Xf)(K)} & {H^1_*(R,\Gc^\Xf)} & {H^1_*(\Rt,\Gc^\Xf)\times_{H^1_*(\Kt,\Gc^\Xf)}H^1_*(K,\Gc^\Xf)} & 1 \\
    	1 & {f^{-1}(f([\Xf]))} & {H^1_*(R,\Gc)} & {H^1_*(\Rt,\Gc)\times_{H^1_*(\Kt,G)}H^1_*(K,G)} & 1.
    	\arrow[from=1-1, to=1-2]
        \arrow[from=1-2, to=2-2]
    	\arrow[from=1-2, to=1-3]
    	\arrow[from=1-3, to=1-4]
    	\arrow["{1\mapsto [\Xf]}", from=1-3, to=2-3]
    	\arrow[from=1-4, to=1-5]
    	\arrow["{(1,1)\mapsto([\Xf_{\Rt}],[\Xf_K])}", from=1-4, to=2-4]
    	\arrow[from=2-1, to=2-2]
    	\arrow[from=2-2, to=2-3]
    	\arrow["f",from=2-3, to=2-4]
    	\arrow[from=2-4, to=2-5]
    \end{tikzcd}
    \end{adjustbox}\]
   La première ligne est exacte. La seconde ligne l'est également en choisissant $[\Xf]$ et $([\Xf_{\Rt}],[\Xf_K])$ comme éléments neutres. D'où le résultat.
\end{proof}

Cela nous permet notamment d'avoir des résultats sur les noyaux des flèches du diagramme de la question \ref{MainDiagram} :

\pagebreak

\begin{prop}\label{CritereEgaliteKer}
    Notons $\mathfrak{C}$, l'ensemble des $\Gc^\Xf$ pour $[\Xf]$ parcourant l'ensemble \linebreak $\Ker\left(H^1_*(R,\Gc)\rightarrow H^1_*(K,G)\right)$ (en choisissant qu'un seul représentant pour chaque classe d'isomorphisme). On a :
    \begin{equation*}
    \begin{array}{c}
    \forall \Gc'\in \mathfrak{C},\\ 
    {\Gc'(\Rt)\backslash \Gc'(\Kt)/\Gc'(K)}\\
    \text{\upshape est trivial.}
    \end{array}  
    \iff
    \begin{array}{c}
    \text{\upshape les noyaux de } H^1_*(R,\Gc)\rightarrow H^1_*(K,G)\\
    \text{\upshape et } H^1_*(\Rt,\Gc)\rightarrow H^1_*(\Kt,G)\\
    \text{\upshape sont en bijection naturelle.}
    \end{array}   
\end{equation*}
\end{prop}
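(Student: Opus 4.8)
Le plan est d'analyser la flèche de changement de base $H^1_*(R,\Gc)\to H^1_*(\Rt,\Gc)$ restreinte aux noyaux. Notons $g : H^1_*(R,\Gc)\to H^1_*(K,G)$, puis $g' : H^1_*(\Rt,\Gc)\to H^1_*(\Kt,G)$, et $f : H^1_*(R,\Gc)\to P$ avec $P:=H^1_*(\Rt,\Gc)\times_{H^1_*(\Kt,G)}H^1_*(K,G)$, comme dans les lemmes \ref{TwistEtFibresKer} et \ref{TwistEtFibres}. Je commencerais par observer que la flèche de changement de base $[\Xf]\mapsto [\Xf_{\Rt}]$ envoie $\Ker(g)$ dans $\Ker(g')$ : en effet, si $[\Xf_K]$ est trivial, alors $[\Xf_{\Kt}]=[(\Xf_K)_{\Kt}]$ l'est aussi. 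C'est cette flèche qui constitue la « bijection naturelle » de l'énoncé, et il s'agit de déterminer quand c'en est une. Comme $f([\Xf])=([\Xf_{\Rt}],[\Xf_K])$ et que $[\Xf_K]=*$ pour $[\Xf]\in\Ker(g)$, la restriction $f|_{\Ker(g)}$ s'identifie précisément à cette flèche de changement de base, à valeurs dans $\Ker(g')\times\{*\}\subset P$.

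Ensuite, je montrerais que la flèche $\Ker(g)\to\Ker(g')$ est toujours surjective. Soit $a\in\Ker(g')$ ; alors $a$ s'envoie sur l'élément trivial de $H^1_*(\Kt,G)$, donc $(a,*)$ appartient au produit fibré $P$. Par surjectivité de $f$ (établie dans la preuve du théorème \ref{DoubleQuotientPatching}), il existe $[\Xf]$ avec $f([\Xf])=(a,*)$ ; cet $[\Xf]$ est dans $\Ker(g)$, puisque $[\Xf_K]=*$, et son image par changement de base est $a$. La question de la bijectivité se ramène donc à celle de l'injectivité.

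Enfin, j'identifierais les fibres. Pour $[\Xf]\in\Ker(g)$, la fibre de la flèche de changement de base au-dessus de $[\Xf_{\Rt}]$ est $\{[\mathfrak{Y}]\in\Ker(g)\mid [\mathfrak{Y}_{\Rt}]=[\Xf_{\Rt}]\}$, qui coïncide avec $f^{-1}(f([\Xf]))$ puisque la condition $[\mathfrak{Y}_K]=*$ y est automatique. Par le lemme \ref{TwistEtFibres}, cette fibre est en bijection avec le double quotient $(\Gc^\Xf)(\Rt)\backslash (\Gc^\Xf)(\Kt)/(\Gc^\Xf)(K)$. Ainsi, la flèche $\Ker(g)\to\Ker(g')$ est injective si et seulement si tous ces doubles quotients sont triviaux lorsque $[\Xf]$ parcourt $\Ker(g)$. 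Comme un isomorphisme $\Gc^\Xf\cong\Gc^{\mathfrak{Y}}$ de $R$-schémas en groupes induit une bijection entre les doubles quotients associés, cette condition ne dépend que des classes d'isomorphisme des tordus, c'est-à-dire des éléments de $\mathfrak{C}$. En combinant avec la surjectivité toujours acquise, on conclut que les deux noyaux sont en bijection naturelle si et seulement si $\Gc'(\Rt)\backslash \Gc'(\Kt)/\Gc'(K)$ est trivial pour tout $\Gc'\in\mathfrak{C}$.

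La principale difficulté, qui reste ici mineure, est le soin à apporter dans l'identification des fibres de $f$ restreint à $\Ker(g)$ avec celles de la flèche de changement de base, ainsi que la vérification que le double quotient ne dépend que de la classe d'isomorphisme du tordu : c'est ce dernier point qui autorise le passage de « pour tout $[\Xf]\in\Ker(g)$ » à « pour tout $\Gc'\in\mathfrak{C}$ » dans l'énoncé.
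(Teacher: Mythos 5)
Votre preuve est correcte et suit essentiellement la même démarche que celle de l'article : surjectivité de $\Ker(g)\to\Ker(g')$ via la surjectivité de $f$ établie dans le théorème \ref{DoubleQuotientPatching}, puis identification des fibres avec $f^{-1}(f([\Xf]))$ et donc avec les doubles quotients tordus via le lemme \ref{TwistEtFibres}. Votre remarque finale sur l'indépendance du double quotient vis-à-vis de la classe d'isomorphisme du tordu est un détail que l'article laisse implicite, mais qui est bien justifié.
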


\begin{proof}
    Soit $[\Xf']\in \Ker\left(H^1_*(\Rt,\Gc)\rightarrow H^1_*(\Kt,\Gc)\right)$. D'après le théorème précédent, \linebreak le couple $(1,[\mathfrak{X'}])$ dans $H^1_*(K,G)\times_{H^1_*(\Kt,G)}H^1_*(\Rt,\Gc)$ provient d'une classe $[\Xf] \in H^1_*(\Rt,\Gc)$. Par définition, son image dans $H^1_*(K,G)$ est triviale. D'où la surjectivité.

    Soit un élément $[\Xf]\in \Ker\left(H^1_*(R,\Gc)\rightarrow H^1_*(K,\Gc)\right)$. D'après le lemme \ref{TwistEtFibres}, on a l'isomorphisme $(\Gc^\Xf)(\Rt)\backslash (\Gc^\Xf)(\Kt)/(\Gc^\Xf)(K)\cong f^{-1}(f([\Xf]))$. En conséquence, le double quotient est trivial si et seulement si $f^{-1}(f([\Xf]))$ est trivial ; c'est-à-dire si et seulement si $[\Xf]$ est l'unique élément de $H^1_*(R,\Gc)$ qui s'envoie sur $([\Xf_{\Rt}],1)$ par $f$, ou encore, si et seulement si c'est l'unique élément de $\Ker\left(H^1_*(R,\Gc)\rightarrow H^1_*(K,\Gc)\right)$ valant $[\Xf_{\Rt}]$ dans $H^1_*(\Rt,\Gc)$. Ceci prouve l'équivalence.
\end{proof}

On en déduit finalement :

\begin{thm} \label{DecoupagePb}
    Notons $\mathfrak{C}$, l'ensemble des $\Gc^\Xf$ pour $[\Xf]$ parcourant l'ensemble $H^1_*(R,\Gc)$ (en choisissant qu'un seul représentant pour chaque classe d'isomorphisme). On a :
\begin{equation*}
    \begin{array}{c}
    \forall \Gc'\in \mathfrak{C},\\ 
    {\Gc'(\Rt)\backslash \Gc'(\Kt)/\Gc'(K)}\\
    \text{\upshape est trivial.}
    \end{array}  
    \Leftrightarrow
    \begin{array}{c}
    \forall \Gc'\in \mathfrak{C}, \text{\upshape les noyaux de }\\
    H^1_*(R,\Gc')\rightarrow H^1_*(K,\Gc')\\
    \text{\upshape et } H^1_*(\Rt,\Gc')\rightarrow H^1_*(\Kt,\Gc')\\
    \text{\upshape sont en bijection naturelle.}
    \end{array}
    \Leftrightarrow
    \begin{array}{c}
    \text{\upshape les fibres de }\\
    H^1_*(R,\Gc)\rightarrow H^1_*(K,G)\\
    \text{\upshape et } H^1_*(\Rt,\Gc)\rightarrow H^1_*(\Kt,G)\\
    \text{\upshape sont en bijection naturelle.}
    \end{array}
    \Leftrightarrow
    \begin{array}{c}
    \text{\upshape le diagramme de}\\
    \text{\upshape la question \ref{MainDiagram}}\\ 
    \text{\upshape est cartésien.}
    \end{array}  
\end{equation*}
\end{thm}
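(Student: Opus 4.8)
The plan is to prove the four conditions equivalent by reducing everything, through the twisting technique, to the combinatorics of the fibres of the three natural maps
$$a\colon H^1_*(R,\Gc)\to H^1_*(K,G),\qquad b\colon H^1_*(\Rt,\Gc)\to H^1_*(\Kt,G),\qquad d\colon H^1_*(K,G)\to H^1_*(\Kt,G),$$
of the comparison map
$$f\colon H^1_*(R,\Gc)\to H^1_*(K,G)\times_{H^1_*(\Kt,G)}H^1_*(\Rt,\Gc),$$
and of the base-change morphism $c\colon H^1_*(R,\Gc)\to H^1_*(\Rt,\Gc)$, with $b\circ c=d\circ a$. The cleanest link is between the first and the last condition, and it is almost formal from Lemme \ref{TwistEtFibres}: since $f$ is surjective (this was shown in the proof of Théorème \ref{DoubleQuotientPatching}), the diagram of Question \ref{MainDiagram} is cartesian if and only if $f$ is injective, i.e. all its fibres are singletons. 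Lemme \ref{TwistEtFibres} identifies the fibre through $[\Xf]$ with the double quotient $(\Gc^\Xf)(\Rt)\backslash(\Gc^\Xf)(\Kt)/(\Gc^\Xf)(K)$, and as $[\Xf]$ runs over $H^1_*(R,\Gc)$ the groups $\Gc^\Xf$ run over $\mathfrak{C}$ while the fibres through them exhaust all non-empty fibres of $f$. Hence every fibre of $f$ is trivial exactly when every double quotient attached to $\mathfrak{C}$ is trivial, giving the equivalence of the first and last conditions.

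Next I would establish the equivalence of the first two conditions, which is where Proposition \ref{CritereEgaliteKer} enters. For a fixed $\Gc'=\Gc^\Xf\in\mathfrak{C}$, that proposition says the kernels of $H^1_*(R,\Gc')\to H^1_*(K,\Gc')$ and $H^1_*(\Rt,\Gc')\to H^1_*(\Kt,\Gc')$ are in natural bijection if and only if the double quotient of every twist $(\Gc')^{\mathfrak Y}$ with $[\mathfrak Y]\in\Ker\!\left(H^1_*(R,\Gc')\to H^1_*(K,\Gc')\right)$ is trivial. Here I would invoke the transitivity of twisting recalled just before Lemme \ref{TwistEtFibresKer}: such a $(\Gc')^{\mathfrak Y}$ is isomorphic to $\Gc^{\mathfrak Z}$ with $[\mathfrak Z]=\varphi_\Xf([\mathfrak Y])$, so it again lies in $\mathfrak{C}$; conversely every member of $\mathfrak{C}$ is its own twist by the trivial (kernel) class. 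Thus the family of all kernel-twists of members of $\mathfrak{C}$ is precisely $\mathfrak{C}$, so the double quantifier collapses and ``for all $\Gc'\in\mathfrak{C}$, the kernels are in natural bijection'' becomes exactly ``for all $\Gc'\in\mathfrak{C}$, the double quotient is trivial''. This is the equivalence of conditions one and two.

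It remains to connect the second and third conditions and the third with ``cartesian''. For the latter, unwinding the universal property shows the diagram is cartesian if and only if $f$ is bijective; since $f$ is automatically surjective, this holds if and only if each base-change map $c\colon a^{-1}(\xi)\to b^{-1}(d(\xi))$ is injective for every $\xi\in H^1_*(K,G)$, and the surjectivity of $f$ simultaneously forces these maps to be onto (in particular $b^{-1}(d(\xi))$ is empty whenever $a^{-1}(\xi)$ is). Hence ``cartesian'' is equivalent to all these maps being bijective, which is the natural bijection of the fibres of $a$ and $b$, i.e. the third condition. For the second versus third, I would apply Lemme \ref{TwistEtFibresKer} over $R$ and over $\Rt$ to identify $\Ker\!\left(H^1_*(R,\Gc^\Xf)\to H^1_*(K,\Gc^\Xf)\right)$ with the fibre $a^{-1}([\Xf_K])$ and $\Ker\!\left(H^1_*(\Rt,\Gc^\Xf)\to H^1_*(\Kt,\Gc^\Xf)\right)$ with $b^{-1}([\Xf_{\Kt}])$, compatibly with $c$; as $[\Xf]$ runs over $H^1_*(R,\Gc)$ these cover all fibres of $a$ over the image of $a$, the remaining fibres being empty on both sides, so the two conditions coincide.

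The verifications I am glossing over are bookkeeping, but the point needing genuine care — the main obstacle — is to check that every ``natural bijection'' occurring in conditions two and three is indeed the one induced by the base-change morphism $c$, and that the pointed-set identifications of Lemmes \ref{TwistEtFibresKer} and \ref{TwistEtFibres} are compatible with $c$, $d$ and the chosen base points (the distinguished classes $[\Xf]$, $[\Xf_K]$, $([\Xf_{\Rt}],[\Xf_K])$). Because we manipulate pointed sets rather than groups, one must keep track of these base points throughout and use the surjectivity of $f$ to control empty fibres; once this is done, the quantification over $\mathfrak{C}$ faithfully reproduces ``all fibres'' and the four equivalences close up.
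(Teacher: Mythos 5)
Your proposal is correct and takes essentially the same route as the paper: the first equivalence via Proposition \ref{CritereEgaliteKer}, the second via Lemme \ref{TwistEtFibresKer}, and the last as the classical characterisation of cartesian squares of pointed sets, with surjectivity of $f$ (from Théorème \ref{DoubleQuotientPatching}) controlling the empty fibres. The only differences are that you add a redundant direct link between the first and fourth conditions via Lemme \ref{TwistEtFibres}, and that you spell out the transitivity-of-twisting bookkeeping (kernel-twists of members of $\mathfrak{C}$ remain in $\mathfrak{C}$, so the double quantifier collapses) which the paper treats as immediate.
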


\begin{proof}
    La première équivalence est une conséquence immédiate de la proposition \ref{CritereEgaliteKer}. La deuxième équivalence provient du lemme \ref{TwistEtFibresKer}. Enfin, la dernière équivalence est un résultat classique sur les diagrammes cartésiens.
\end{proof}

En résumé, pour répondre positivement à la question d'injectivité \ref{QuestionBFF} pour une certaine classe de groupes de Bruhat-Tits, grâce au théorème \ref{DecoupagePb}, la stratégie est d'établir les trois faits suivants :
\begin{enumerate}
    \item La classe des groupes de Bruhat-Tits que l'on étudie est stable par torsion intérieure\\
    (ces groupes sont toujours lisses et séparés) ;
    \item Le double quotient est trivial pour tout élément de cette classe ;
    \item La trivialité du noyau est réalisée sur $\Rt$ pour tout élément de cette classe.
\end{enumerate}
Bien entendu, on peut envisager une stratégie analogue si on est seulement intéressé par la trivialité du noyau grâce à la proposition \ref{CritereEgaliteKer}.

\begin{rmq}
    Comme annoncé en début de section, le lecteur peut éviter la notion d'espace algébrique en se limitant aux schémas affines (par exemple si le groupe étudié est semi-simple). Les preuves peuvent alors être simplifiées. En effet, on utilise d'une part que toute descente fpqc est effective pour les schémas affines, et d'autre part les techniques de recollements au niveau des schémas affines (cf. \cite[Theorème 1.1]{MB}).
\end{rmq}

\pagebreak

\begin{rmq}\label{RmqIndQuasiAff}
    Le point de vue des schémas ind-quasi-affines (\stacks{0AP5}) ne couvre pas non plus tous les cas qui nous intéressent bien qu'ils vérifient également la descente fpqc (\stacks{0APK}) et les techniques de recollements (cf. plus bas). En effet, le modèle de Néron $\Gc_m$ du tore $\GG_m$ (exemple simple d'un schéma en groupes de Bruhat-Tits non affine) n'est pas ind-quasi-affine comme nous allons l'établir ci-dessous (preuve communiquée par Gabber).
    \medskip
    
    Prenons $R$ local d'uniformisante $\pi$ pour simplifier. Il suffit de voir que l'union, que l'on note $U$, de $\pi^a \GG_{m,R}$, $\pi^b \GG_{m,R}$ et $\pi^c \GG_{m,R}$ dans $\Gc_m$ pour un choix $a,b,c$ d'entiers tous différents, n'est pas quasi-affine. En effet, $U$ est quasi-compact, donc le caractère ind-quasi-affine devrait impliquer que $U$ est quasi-affine par définition. Cela signifierait que $U\rightarrow \Spec(\Oc_U(R))$ est une immersion ouverte (cf. (4) de \stacks{01SM}) et donc que \linebreak $\pi^b \GG_{m,R} \rightarrow U\rightarrow \Spec(\Oc_U(R))$ l'est également.
    
    Par exemple, dans le cas où $(a,b,c)=(0,1,2)$, l'anneau des fonctions globales de $U$ vaut $R[X,\pi^2X^{-1}]$. En effet, le corps des fonctions de $\Gc_m$ est exactement $K(X,X^{-1})$. Les fonctions définies sur $\GG_{m,R}$ sont alors $R[X,X^{-1}]$. Pour être défini également sur $\pi \GG_{m,R}$ et $\pi^2\GG_{m,R}$, il faut préserver $\pi R^\times$ et $\pi^2 R^\times$. On réalise alors que les fonctions en question sont exactement $R[X,\pi^2 X^{-1}]$.

    Pour ce qui est de $\pi \GG_{m,R}$, on réalise qu'il s'agit de $R[\pi^{-1}X,\pi X^{-1}]$. Le morphisme $\pi \GG_{m,R}\rightarrow \Spec(\Oc_U(R))$ est alors donné au niveau des algèbres par l'inclusion \linebreak $R[X,\pi^2/X]\subset R[\pi^{-1}X,\pi X^{-1}]$.
    
    De manière un peu plus formelle, cela donne le morphisme suivant :
    \begin{align*}
        R[Y_1,Y_2]/(Y_1Y_2-\pi^2) & \overset{\varphi}{\rightarrow} R[Z_1,Z_2]/(Z_1Z_2-1)\\
        Y_1,Y_2&\mapsto \pi Z_1,\pi Z_2
    \end{align*}
    Au niveau des fibres spéciales, on a donc :
    \begin{align*}
        \kappa[Y_1,Y_2]/(Y_1Y_2) & \overset{\varphi}{\rightarrow} \kappa[Z_1,Z_2]/(Z_1Z_2-1)\\
        Y_1,Y_2&\mapsto 0,0
    \end{align*}
    Autrement dit, on a la factorisation : $(\pi\GG_{m,R})_{\kappa}\rightarrow \Spec(\kappa)\rightarrow \Spec(\Oc_U(R))_\kappa$. 
    
    Le morphisme $\pi \GG_{m,R}\rightarrow \Spec(\Oc_U(R))$ ne peut donc pas être une immersion ouverte, puisque cela n'est pas le cas sur $\kappa$.
\end{rmq}

\begin{prop}[Techniques de recollements sur les schémas ind-quasi-affines] ${}$ \linebreak
        Notons $\mathrm{INDQAFF}$, la catégorie fibrée des espaces algébriques ind-quasi-affines et reprenons le contexte de \cite[0.9]{MB} et l'hypothèse de platitude en \cite[1.0]{MB}. Le foncteur $\Phi_{\mathrm{INDQAFF}/S}$ est une équivalence de catégories.
\end{prop}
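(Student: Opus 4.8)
Mon plan est de reprendre la machinerie générale de recollements de \cite{MB} et de l'appliquer, non plus à la catégorie fibrée des espaces algébriques séparés et localement de présentation finie, mais à $\mathrm{INDQAFF}$. Le point de départ est l'observation suivante : dans la preuve de \cite[Corollaire 5.6.(1)]{MB} (celle-là même qui fournit le rappel \ref{Patching}), la seule propriété de la classe des espaces algébriques séparés et loc. de présentation finie qui intervienne réellement est son caractère de champ pour la topologie fpqc, c'est-à-dire le fait que ces propriétés soient locales sur la base pour la topologie fpqc et que la descente fpqc y soit effective. La configuration de base \cite[0.9]{MB} et l'hypothèse de platitude \cite[1.0]{MB} étant supposées acquises, il suffit donc de vérifier que $\mathrm{INDQAFF}$ est un champ pour la topologie fpqc, puis de recopier l'argument.

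Je commencerais par la pleine fidélité : les morphismes entre espaces algébriques ind-quasi-affines se recollent pour la topologie fpqc, les foncteurs Isom correspondants étant des faisceaux fpqc, ce qui résulte déjà de ce que les espaces algébriques forment un champ. Pour la surjectivité essentielle, j'utiliserais \stacks{0APK} : toute donnée de descente fpqc d'espaces algébriques ind-quasi-affines est effective, et surtout l'objet recollé reste ind-quasi-affine, le caractère ind-quasi-affine étant local sur la base pour la topologie fpqc. C'est précisément l'analogue, pour notre classe, du fait que le caractère séparé et localement de présentation finie descend, lequel sous-tend \cite[Corollaire 5.6.(1)]{MB}.

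La principale difficulté, et la raison pour laquelle on ne peut pas simplement citer \cite[Corollaire 5.6.(1)]{MB}, est que les espaces ind-quasi-affines ne sont pas en général de présentation finie (le modèle de Néron $\Gc_m$ de la remarque \ref{RmqIndQuasiAff} l'illustre déjà) : il faut donc effectivement reprendre l'argument de Moret-Bailly en remplaçant son ingrédient de descente par \stacks{0APK}. De façon plus concrète, on pourrait aussi écrire chaque objet comme réunion filtrante d'ouverts quasi-affines et tenter de recoller niveau par niveau ; le point délicat serait alors un argument de cofinalité permettant d'aligner, via l'isomorphisme de recollement $\tau$, les deux présentations ind (celle sur $K$ et celle sur $\Rt$), ce qui repose sur la quasi-compacité de chacun des ouverts quasi-affines en jeu. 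Dans les deux approches, le point central demeure l'effectivité de la descente fpqc fournie par \stacks{0APK}.
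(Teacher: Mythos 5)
Votre proposition est correcte et suit essentiellement la même démarche que l'article : tout repose sur le fait que les espaces algébriques ind-quasi-affines sont séparés, que la descente fpqc y est effective (\stacks{0APK}) et que le caractère ind-quasi-affine est local sur la base pour la topologie fpqc (\stacks{0AP8}), ce qui fait de $\mathrm{INDQAFF}$ un champ fpqc auquel la machinerie de Moret-Bailly s'applique. La seule différence est de présentation : plutôt que de recopier la preuve de \cite[Corollaire 5.6.(1)]{MB} comme vous le proposez, l'article invoque directement le critère général \cite[Corollaire 5.4.4.]{MB}, qui s'applique à toute catégorie fibrée vérifiant précisément ces trois propriétés, ce qui rend la vérification immédiate et dispense de l'argument de cofinalité que vous esquissez en fin de proposition.
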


\begin{proof}
        C'est une conséquence immédiate de \cite[Corollaire 5.4.4.]{MB} puisque les ind-quasi-affines sont séparés, vérifient la descente fpqc (\stacks{0APK}) et que c'est une propriété locale sur la base pour la topologie fpqc (\stacks{0AP8}).
\end{proof}

\section{Techniques d'approximation}

Dans cette partie, $K$ désigne un corps infini (non nécessairement le corps de fractions d'un anneau de Dedekind semi-local). Soit $G$ un groupe algébrique réductif sur $K$.

On considère $\Sigma$ un ensemble non vide (éventuellement infini) de valuations discrètes non triviales de $K$ deux à deux non équivalentes. 
Posons $K_\Sigma := \prod_{v\in \Sigma}K_v$, où les $K_v$ sont des corps henséliens pour la valuation $v$ contenant $K$. On suppose par ailleurs que $K$ est dense dans chacun des $K_v$. Posons alors $G(K_\Sigma):=\prod_{v\in \Sigma}G(K_v)$. Pour tout $v\in \Sigma$, on voit également $G(K_v)$ dans $G(K_\Sigma)$ en l'identifiant avec $G(K_v)\times \prod_{w\in \Sigma\backslash\{v\}}\{1\}$.

Notons d'ailleurs que les $G(K_v)$ sont munis de la topologie adique (cf. \cite[3.1]{GGMB}).

Rappelons que la notation $G(K)^+$ désigne le sous-groupe de $G(K)$ engendré par les $K$-points des groupes de racines de $G$ (réduit à $\{1\}$ s'il y en a pas), ou encore par les $K$-points des sous-groupes unipotents déployés de $G$, et que $RG(K)$ désigne l'ensemble des éléments $R$-équivalents à l'élément neutre dans $G(K)$ (cf. \cite[\S 3]{CTS}). Notons alors $G(K_\Sigma)^+:=\prod_{v\in \Sigma}G(K_v)^+$ et $RG(K_\Sigma):=\prod_{v\in \Sigma}RG(K_v)$.
\medskip

L'objectif de cette partie est de montrer que $G(K_\Sigma)^+\subset \overline{G(K)}$. La motivation sous-jacente étant que $G(K_\Sigma)^+$ est un objet à la fois très maniable et suffisamment gros dans $G(K_\Sigma)$ pour nous aider à montrer la trivialité du double quotient de la partie précédente.
On a même mieux. Désignons par $\overline{RG(K)}$ l'adhérence de $RG(K)$ dans $G(K_\Sigma)$. On va montrer que $G(K_\Sigma)^+ \subset \overline{RG(K)}$.
\medskip

Pour tout $v\in \Sigma$, considérons donc les sous-groupes $K_v$-presque simples $G_{v,i}$ de $D(G)_{K_v}$, pour $i$ dans un ensemble fini $I_v$ (cf. \cite[Theorem 21.51.]{Milne}).
\medskip

La proposition suivante de Prasad va jouer un rôle crucial :

\begin{prop}[{\cite[Proposition 2.2.14]{kaletha_prasad_2023}}] \label{Prasad}
    Soit $L$ un corps valué discrètement hensélien et $H$ un $L$-groupe $L$-presque simple. Tout sous-groupe ouvert non borné de $H(L)$ contient le sous-groupe $H(L)^+$.
\end{prop}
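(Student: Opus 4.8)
The plan is to reduce the statement to the behaviour of the relative root groups and then exploit the Bruhat–Tits filtrations. First I would dispose of the anisotropic case: if $H$ is $L$-anisotropic it has no nontrivial $L$-split torus and no relative root groups, so $H(L)^+=\{1\}$ and the inclusion holds vacuously. Hence I assume $H$ is $L$-isotropic and bring in the Bruhat–Tits apparatus. Fix a maximal $L$-split torus $S\subset H$, let $\Phi:={}_{L}\Phi(H,S)$ be the relative root system (irreducible, since $H$ is $L$-almost simple), let $U_a$ be the root group attached to $a\in\Phi$, and let $(U_{a,r})_{r\in\RR}$ be the filtration coming from the valuation of the root datum, so that $U_{a,r}$ decreases in $r$, with $\bigcap_r U_{a,r}=\{1\}$ and $\bigcup_r U_{a,r}=U_a(L)$. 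By definition $H(L)^+=\langle U_a(L) : a\in\Phi\rangle$, so it suffices to prove $U_a(L)\subset\Omega$ for every $a$. Since $\Omega$ is open, $\Omega\cap U_a(L)$ is open in $U_a(L)$, so there is an $r_a$ with $U_{a,r_a}\subset\Omega$; the whole content is to upgrade these deep pieces to the full root groups.

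The engine is the commutation relation for the filtration: for $t\in S(L)$ (more generally $t\in Z_H(S)(L)$) one has $t\,U_{a,r}\,t^{-1}=U_{a,\,r+\delta_a(t)}$, where $\delta_a(t)=a(\nu(t))$ and $\nu(t)\in X_*(S)\otimes\RR$ is the translation induced by $t$ on the standard apartment. Consequently, if $\Omega$ contains an element $t$ with $\delta_a(t)<0$, then for all $n\geq 0$ one has $t^{\,n}U_{a,r_a}t^{-n}=U_{a,\,r_a+n\delta_a(t)}\subset\Omega$, and letting $n\to\infty$ gives $U_a(L)=\bigcup_{n} U_{a,\,r_a+n\delta_a(t)}\subset\Omega$. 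Thus the proof reduces to producing, for each $a\in\Phi$, an element of $\Omega$ that contracts the filtration of $U_a$.

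Here is where unboundedness enters. By the Bruhat–Tits fixed-point theorem, a subgroup of $H(L)$ with a bounded orbit on the building fixes a point and is therefore bounded; since $\Omega$ is unbounded it fixes no point, and, being open, it contains (via the Iwahori–Bruhat double-coset combinatorics of the affine Tits system) an element acting on an apartment as a nonzero translation. Transporting its axis into the standard apartment by a suitable element of $\Omega$ yields $t_0\in\Omega$ with $\nu_0:=\nu(t_0)\neq 0$. Openness also provides the opposite deep pieces $U_{-a,r}$, hence Weyl representatives $m_a$ as products of root-group elements lying in $\Omega$; these normalise $S$, so each conjugate $w\,t_0\,w^{-1}$ (with $w$ a product of such $m_a$) again lies in $\Omega$ and has translation vector $w\nu_0$. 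Because the Weyl orbit $Wa$ of any root is symmetric ($s_b(b)=-b$ shows $-b\in Wa$) and spans the root space, for each fixed $a$ there is $w\in W$ with $\delta_a(w\,t_0\,w^{-1})=a(w\nu_0)<0$. This supplies the contracting element demanded in the previous paragraph, and we conclude $U_a(L)\subset\Omega$ for all $a$, whence $H(L)^+\subset\Omega$.

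The main obstacle is precisely this extraction step. Since $L$ is only Henselian and discretely valued, hence possibly not locally compact, one cannot argue by compactness or by extracting convergent subsequences from an unbounded orbit; the passage from ``unbounded'' to ``contains a genuinely hyperbolic (translating) element'', as opposed to an unbounded family of elliptic ones such as a single root group, must instead be read off from the combinatorics of the affine Weyl group and the finiteness of its spherical part. The second delicate point is the propagation from one root to all of them, which is why it matters that openness already captures the opposite deep pieces $U_{-a,r}$ and therefore the Weyl representatives inside $\Omega$. Once these two points are secured, the filtration-shifting mechanism closes the argument uniformly, the anisotropic case being trivial.
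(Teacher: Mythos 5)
The paper does not prove this proposition at all: it is quoted verbatim from Kaletha--Prasad (Proposition 2.2.14) and used as a black box, so there is no in-paper argument to compare yours against. Judged on its own terms, your skeleton --- reduce to showing $U_a(L)\subset\Omega$ for each relative root, extract a deep piece $U_{a,r_a}\subset\Omega$ from openness, then contract the filtration by conjugating with a translating element of $\Omega$ --- is indeed the standard strategy. But two of your steps are not merely unproved; one of them is false as stated.

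First, the claim that ``openness also provides the opposite deep pieces $U_{-a,r}$, hence Weyl representatives $m_a$ as products of root-group elements lying in $\Omega$'' cannot be right: the representative $m_a(u)=u'\,u\,u''$ attached to $u\in U_{a,r}\smallsetminus\{1\}$ has $u',u''\in U_{-a,-r}$, so the deeper you take $u$, the \emph{shallower} the opposite factors, and they leave $\Omega$ rather than staying in it. If openness alone produced Weyl representatives, every bounded open subgroup (a deep congruence subgroup, say) would contain them, which is absurd. This breaks your propagation step, since you invoke these $m_a\in\Omega$ before any full root group has been secured. (The step is repairable: for any root $a$ with $a(\nu_0)\neq 0$, either $t_0$ or $t_0^{-1}$ already contracts $U_a$, and only after obtaining the full $U_{\pm b}(L)$ for such $b$ do you get honest representatives $m_b\in\Omega$ with which to move $\nu_0$ off the remaining root hyperplanes --- but that bootstrapping has to be written down and uses irreducibility of the relative root system in an essential way.) Second, and more seriously, the production of an element $t_0\in\Omega$ normalizing the chosen apartment and acting on it by a nonzero translation $\nu_0$ is exactly the hard content of the proposition, and you only gesture at it (``via the Iwahori--Bruhat double-coset combinatorics'', ``transporting its axis into the standard apartment by a suitable element of $\Omega$'' --- why would that conjugator lie in $\Omega$?). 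Unboundedness only tells you that $\Omega$ fixes no point of the building; passing from that to a hyperbolic element of $\Omega$ whose axis you control, over a merely Henselian and hence possibly non--locally-compact field, is the step you yourself identify as the main obstacle, and it is left unproved. As it stands, the argument is a plausible outline of the kind of proof given in Kaletha--Prasad, not a proof.
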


On va donc montrer que l'on est bien dans le cadre de validité de cette proposition.
Pour cela, on a besoin de montrer quelques lemmes.
\medskip

Commençons par le lemme suivant bien connu dont on rappelle la preuve.

\begin{lem}\label{DecompLie}
     Soit $H$ un groupe réductif sur un corps infini $L$ et $T$ un tore maximal de $H$. Il existe $h_1,..,h_n \in H(L)$ tel que $\mathrm{Lie}(H)=\sum_{i=1}^n {}^{h_i}\mathrm{Lie}(T)$.
\end{lem}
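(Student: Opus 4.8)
The plan is to reduce to an algebraically closed field and then read the statement off the root space decomposition. Write $\mathfrak{h} := \mathrm{Lie}(H)$ and $\mathfrak{t} := \mathrm{Lie}(T)$, and let $V := \sum_{h \in H(L)} {}^{h}\mathfrak{t}$ be the $L$-linear span of all $H(L)$-conjugates of $\mathfrak{t}$. Since $\mathfrak{h}$ is finite-dimensional, $V$ is already the sum of finitely many of the subspaces ${}^{h_i}\mathfrak{t}$, so it suffices to prove $V = \mathfrak{h}$. By construction $V$ is $\mathrm{Ad}(H(L))$-stable. Now $H$, being connected reductive, is unirational over any field, and since $L$ is infinite this forces $H(L)$ to be Zariski-dense in $H$. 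The condition ``$\mathrm{Ad}(h)\,V \subseteq V$'' on $h$ is Zariski-closed --- it is the vanishing of the off-diagonal matrix coefficients of $\mathrm{Ad}(h)$ in a basis adapted to $V$ --- so, holding on the dense set $H(L)$, it holds on all of $H$. Hence $V_{\overline{L}} := V \otimes_L \overline{L}$ is $\mathrm{Ad}(H(\overline{L}))$-stable inside $\mathfrak{h}_{\overline{L}}$ and contains $\mathfrak{t}_{\overline{L}}$. Comparing dimensions, it is enough to show that over $\overline{L}$ the smallest $\mathrm{Ad}(H(\overline{L}))$-stable subspace containing $\mathfrak{t}_{\overline{L}}$ is all of $\mathfrak{h}_{\overline{L}}$.

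Over $\overline{L}$ the torus splits and we have $\mathfrak{h}_{\overline{L}} = \mathfrak{t}_{\overline{L}} \oplus \bigoplus_{\alpha} \mathfrak{g}_{\alpha}$. Fix a root $\alpha$, its root subgroup $u_\alpha \colon \GG_a \to H_{\overline{L}}$, and set $X_\alpha := du_\alpha(1)$, a generator of $\mathfrak{g}_\alpha$. For $H_0 \in \mathfrak{t}_{\overline{L}}$ the assignment $s \mapsto \mathrm{Ad}(u_\alpha(s))\,H_0$ is a polynomial curve with values in the stable space $V_{\overline{L}}$; over the infinite field $\overline{L}$ the span of the values of a vector-valued polynomial coincides with the span of its coefficients (Vandermonde), so every coefficient lies in $V_{\overline{L}}$. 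Its degree-one coefficient is the formal derivative $[X_\alpha, H_0] = -\,d\alpha(H_0)\,X_\alpha \in \mathfrak{g}_\alpha$. Thus, provided $H_0$ can be chosen with $d\alpha(H_0) \neq 0$, we get $\mathfrak{g}_\alpha \subseteq V_{\overline{L}}$; running over all $\alpha$ then gives $V_{\overline{L}} = \mathfrak{h}_{\overline{L}}$, as wanted. Conceptually, this says that the conjugates of $\mathfrak{t}$ sweep out the regular semisimple locus of $\mathfrak{h}_{\overline{L}}$, which is dense, and a linear subspace containing a dense subset is everything.

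The hard part is exactly this last choice: one needs a single $H_0 \in \mathfrak{t}_{\overline{L}}$ lying outside all the hyperplanes $\ker d\alpha$ at once --- equivalently, a regular semisimple element of $\mathfrak{t}$. Since there are only finitely many roots and $\overline{L}$ is infinite, such an $H_0$ exists as soon as each differential $d\alpha$ is nonzero, for then the finite union of the $\ker d\alpha$ cannot exhaust $\mathfrak{t}_{\overline{L}}$. This nondegeneracy of the $d\alpha$ is automatic in characteristic zero but is the genuinely delicate point in small characteristic; it is where the content of the lemma really sits and where one expects a hypothesis on the characteristic (or on the isogeny type of $H$) to be needed. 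Everything else --- the unirationality/density reduction and the polynomial-span bookkeeping --- I expect to be routine.
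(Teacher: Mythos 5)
Your first half coincides with the paper's proof: both arguments introduce $V=\sum_{h\in H(L)}{}^{h}\mathrm{Lie}(T)$, use finite-dimensionality to extract finitely many $h_i$, and use unirationality of $H$ over the infinite field $L$ (density of $H(L)$, closedness of the stability condition) to upgrade $\mathrm{Ad}(H(L))$-stability of $V$ to $\mathrm{Ad}(H)$-stability. You then diverge at the decisive step: the paper concludes by citing the implication $(i)\Rightarrow(iv)$ of SGA3, Exp.\ XIII, Th.\ 5.1 to get that $(h,t)\mapsto\mathrm{Ad}(h)(t)$ is dominant from $H\times\mathbf{Lie}(T)$ onto $\mathbf{Lie}(H)$, whereas you try to establish that same dominance by hand via the root space decomposition over $\overline{L}$. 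Your computation correctly yields $\mathfrak{g}_\alpha\subseteq V_{\overline{L}}$ for every root $\alpha$ whose differential $d\alpha$ is nonzero on $\mathrm{Lie}(T)_{\overline{L}}$, but you leave that nonvanishing unproved; as you suspect, it cannot be proved in general. For $H=\mathrm{SL}_2$ the positive root is $\alpha=2\chi$ with $\chi$ a generator of $X^*(T)$, so $d\alpha=0$ in characteristic $2$. The proposal is therefore incomplete as written.

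That said, the obstruction you hit is not an artifact of your method: it is a genuine failure of the statement itself in small characteristic. For $\mathrm{SL}_2$ over an infinite field of characteristic $2$ one has $\mathrm{Lie}(T)=\{\mathrm{diag}(a,-a)\}=\{\mathrm{diag}(a,a)\}=L\cdot I$, which is central in $\mathfrak{sl}_2$; hence ${}^{h}\mathrm{Lie}(T)=\mathrm{Lie}(T)$ for every $h$ and the sum is one-dimensional, while $\dim\mathfrak{sl}_2=3$. So the lemma, and the dominance of $(h,t)\mapsto\mathrm{Ad}(h)(t)$ to which both you and the paper reduce it, require a hypothesis guaranteeing that $\mathrm{Lie}(T)$ contains a regular element of $\mathrm{Lie}(H)$ (equivalently that $\mathrm{Lie}(T)$ is a Cartan \emph{subalgebra}, not merely the Lie algebra of a Cartan subgroup); this is automatic in characteristic $0$ but fails here. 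In other words, your diagnosis that the whole content of the lemma sits in the nonvanishing of the $d\alpha$ is exactly right, and the paper's appeal to SGA3 does not dispense with that condition --- the justification given there (``Cartan subgroups of a reductive group are the maximal tori'') addresses the group-level notion, not the Lie-algebra-level nondegeneracy that the dominance actually needs. To salvage the argument one must either impose a restriction on the characteristic (or on the isogeny type of $H$), or replace $\mathrm{Lie}(T)$ by a subspace that does contain a regular element.
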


\begin{proof}
    Dans la suite, on utilise le gras pour désigner le schéma vectoriel sous-jacent à un espace vectoriel. Considérons l'application :
    \begin{align*}
        H\times \mathbf{Lie}(T)& \overset{\varphi}{\rightarrow} \mathbf{Lie}(H)\\
        (h,t)&\mapsto \mathrm{ad}(h)(t)
    \end{align*}

    Elle est dominante d'après l'implication $(i)\implies (iv)$ de \sga{Exp. XIII, Théorème 5.1.} puisque les sous-groupes de Cartan dans un groupe réductif sont exactement les tores maximaux.
    
    En conséquence, la $H$-enveloppe de $\mathbf{Lie}(T)$, - c'est à dire le plus petit sous-schéma vectoriel de $\mathbf{Lie}(H)$ contenant $\mathbf{Lie}(T)$ sur lequel $H$ agit -, est exactement $\mathbf{Lie}(H)$.

    Notons $E:=\sum_{h\in H(L)}{}^h \mathrm{Lie}(T)$. C'est la $H(L)$-enveloppe de $\mathrm{Lie}(T)$. Montrons alors que $\mathbf{E}$ est $H$-stable. Par définition, $\mathbf{E}$ est $H(L)$-stable. Comme la $H$-stabilité est une condition fermée et que $H(L)$ est dense dans $H$ (puisque $H$ est unirationnel), on a la $H$-stabilité de $\mathbf{E}$ comme voulu.
    
    Par conséquent, $\mathbf{E}=\mathbf{Lie}(H)$, et donc $E=\mathrm{Lie}(H)$. Comme $\mathrm{Lie}(H)$ est de dimension finie, la somme définissant $E_0$ contient un nombre fini de termes. Ceci prouve le résultat.
\end{proof}

Montrons maintenant que $G_{v,i}(K_v)\cap \overline{RG(K)}$ est ouvert pour tous les $G_{v,i}$.

\begin{lem} \label{RGOuvert}
    Soit $v\in \Sigma$. Le sous-groupe $\overline{RG(K)}\cap G(K_v)$ est ouvert dans $G(K_v)$\\ (et donc $\overline{RG(K)}$ est ouvert dans $G(K_\Sigma)$ quand $\Sigma$ est fini).\\
    En particulier, pour tout $i\in I_v$, $G_{v,i}(K_v)\cap \overline{RG(K)}$ est un sous-groupe ouvert de $G_{v,i}(K_v)$.
\end{lem}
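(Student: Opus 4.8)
Le plan est de montrer que le sous-groupe $N_v:=\overline{RG(K)}\cap G(K_v)$ contient un voisinage ouvert de $1$ dans $G(K_v)$ : comme $RG(K)$ est un sous-groupe de $G(K)$ (c'est la classe de $R$-équivalence de l'élément neutre), son adhérence $\overline{RG(K)}$ est un sous-groupe de $G(K_\Sigma)$, donc $N_v$ en est un de $G(K_v)$, et l'existence d'un tel voisinage entraînera aussitôt son ouverture. Pour le produire, on fixe un tore maximal $T$ de $G$ sur $K$ et l'on applique le lemme \ref{DecompLie} : il fournit $h_1,\dots,h_n\in G(K)$ tels que $\mathrm{Lie}(G)=\sum_{i=1}^n {}^{h_i}\mathrm{Lie}(T)$. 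L'idée est d'en déduire une submersion vers $G$ de source munie de bonnes propriétés d'approximation. Plutôt que $T$ lui-même, on travaillera avec un $K$-tore quasi-trivial $Q$ (donc \emph{rationnel}, étant produit de restrictions à la Weil $R_{L/K}\GG_m$, ouvertes dans des espaces affines) muni d'une surjection de $K$-tores $q:Q\twoheadrightarrow T$, cf.\ \cite{CTS} ; la différentielle $dq$ est alors surjective. On considèrera le $K$-morphisme
\[
\nu:Q^n\rightarrow G,\qquad (x_1,\dots,x_n)\mapsto \prod_{i=1}^n h_i\,q(x_i)\,h_i^{-1},
\]
qui vérifie $\nu(e)=1$ en $e=(1,\dots,1)$, et dont la différentielle en $e$ vaut, par la règle de Leibniz, $d\nu_e(Y_1,\dots,Y_n)=\sum_i {}^{h_i}\!\big(dq(Y_i)\big)$, d'image $\sum_i {}^{h_i}\mathrm{Lie}(T)=\mathrm{Lie}(G)$. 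Ainsi $d\nu_e$ est surjective et $\nu$ est lisse en $e$.

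Je me placerais ensuite sur le corps hensélien $K_v$. Comme $\nu$ est lisse en $e$ et que $K_v$ est hensélien valué, le théorème des fonctions implicites (cf.\ \cite{GGMB}) fournira un voisinage ouvert $\Omega$ de $1$ dans $G(K_v)$ contenu dans $\nu\big(Q(K_v)^n\big)$, tout $g\in\Omega$ se relevant en un point $(x_1,\dots,x_n)\in Q(K_v)^n$ aussi proche de $e$ que voulu. Il restera à voir que $\Omega\subset N_v$, c'est-à-dire que pour un tel $g$ on a $(g,1,1,\dots)\in\overline{RG(K)}$.

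La difficulté principale réside précisément dans cette dernière étape : un élément de $\overline{RG(K)}$ s'obtient par approximation \emph{diagonale} par des points de $RG(K)$, donc il faut, pour toute partie finie $S\subset\Sigma$ contenant $v$, approcher $g$ en $v$ et $1$ en chaque $w\in S\setminus\{v\}$ \emph{simultanément} par un \emph{même} élément de $RG(K)$. C'est ici que le passage au tore rationnel $Q$ est crucial : il contourne à la fois l'échec possible de l'approximation faible sur le tore $T$ et la question de la $R$-trivialité. En effet, $Q^n$ étant rationnel, $Q^n(K)$ est dense dans $\prod_{w\in S}Q^n(K_w)$ (approximation faible pour les variétés rationnelles, conséquence du théorème d'Artin--Whaples pour $K$), ce qui permet de choisir $(y_1,\dots,y_n)\in Q(K)^n$ proche de $(x_1,\dots,x_n)$ en $v$ et proche de $e$ en chaque $w\in S\setminus\{v\}$ ; par continuité de $\nu$ en chaque place, $\nu(y)$ approche alors $(g,1,\dots)$. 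Enfin, chaque $y_i\in Q(K)$ est $R$-équivalent à $1$ dans $Q$ (car $Q$ est rationnel et $K$ infini), et l'image d'une chaîne de courbes rationnelles la reliant à $1$ par le $K$-morphisme $x\mapsto h_i\,q(x)\,h_i^{-1}$ montrera que $h_i\,q(y_i)\,h_i^{-1}\in RG(K)$, d'où $\nu(y)\in RG(K)$ puisque $RG(K)$ est un sous-groupe. Ceci établira $(g,1,\dots)\in\overline{RG(K)}$, donc $\Omega\subset N_v$, et l'ouverture de $N_v$.

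Les deux assertions restantes en découleront formellement. Lorsque $\Sigma$ est fini, $\prod_v N_v$ sera ouvert dans $G(K_\Sigma)$ et contenu dans le sous-groupe $\overline{RG(K)}$ (qui contient chaque facteur $N_v$), lequel sera donc ouvert. Quant au dernier point, $G_{v,i}(K_v)\cap\overline{RG(K)}=G_{v,i}(K_v)\cap N_v$ sera l'intersection de l'ouvert $N_v$ de $G(K_v)$ avec le sous-groupe $G_{v,i}(K_v)$, donc un sous-groupe ouvert de $G_{v,i}(K_v)$.
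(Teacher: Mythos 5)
Votre démonstration suit essentiellement la même voie que celle de l'article : astuce de Raghunathan via le lemme \ref{DecompLie}, morphisme produit depuis une puissance d'un tore quasi-trivial dominant un tore maximal, lissité en l'élément neutre, section locale fournie par \cite{GGMB} sur le corps hensélien, puis approximation faible pour le tore quasi-trivial ($K$-rationnel) afin d'approcher diagonalement par des éléments de $RG(K)$. Seule précision à apporter : la surjectivité de $dq$ n'est pas automatique pour une surjection de tores quelconque en caractéristique positive (penser à $x\mapsto x^p$ sur $\GG_m$) ; il faut, comme dans l'article, prendre une résolution flasque $1\to S\to E\to T\to 1$ dont le noyau $S$ est un tore, donc lisse, ce qui garantit la lissité de la projection et la surjectivité de sa différentielle.
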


\pagebreak

\begin{proof}
    On utilise la technique de Raghunathan (qui provient de \cite[1.2]{RagTrick}). Considérons un $K$-tore maximal $T$ de $G$. D'après \ref{DecompLie}, il existe $g_1,..g_n$ tel que \linebreak $\mathrm{Lie}(G)=\sum_{i=1}^n {}^{g_i}\mathrm{Lie}(T)$. Prenons une suite exacte de tores $1\rightarrow S \rightarrow E \overset{\pi}{\rightarrow} T\rightarrow 1$ où $E$ est quasi-trivial (par ex. une résolution flasque de $T$).
    \medskip

    On peut donc considérer le morphisme (seulement de schémas !) :  
    $$\begin{aligned}
         f: E^n & \longrightarrow G \\
        (x_i) & \longmapsto {}^{g_1}\pi(x_1)\cdot ... \cdot {}^{g_n}\pi(x_n)
      \end{aligned}$$
    
    On a alors le diagramme commutatif suivant :
    \[\begin{tikzcd}
    	{\mathrm{Lie}(E^n)} & {\mathrm{Lie}(G)} \\
    	& {\mathrm{Lie}(T^n)}
    	\arrow["{\mathrm{Lie}(f)}", from=1-1, to=1-2]
    	\arrow["{\mathrm{Lie}(\pi^n)}"', two heads, from=1-1, to=2-2]
    	\arrow["{(x_i)\mapsto \sum_{i=1}^n{}^{g_i}x_i}"', two heads, from=2-2, to=1-2]
    \end{tikzcd}\]
    où l'on sait d'une part que $\mathrm{Lie}(E)\rightarrow \mathrm{Lie}(T)$ est surjectif puisque $\pi$ est lisse car $S$ l'est ; et d'autre part $\mathrm{Lie}(T^n)\rightarrow \mathrm{Lie}(G)$ est surjectif puisque $\mathrm{Lie}(G)=\bigoplus_{i=1}^n {}^{g_i}\mathrm{Lie}(T)$. On en déduit alors que $\mathrm{Lie}(f)$ l'est. Cela montre que $f$ est lisse au voisinage de l'élément neutre.
    
    D'après \cite[3.1.2 Lemme]{GGMB}, pour tout $v\in \Sigma$, il existe un ouvert $\Omega_v\subset G(K_v)$ tel que $f^{-1}(\Omega_v)\rightarrow \Omega_v$ admette une section. Donc $\Omega_v\subset f(E(K_v)^n)$.
    \medskip
    
    Comme $E$ est quasi-trivial, il est $K$-rationnel (cf. \cite[Proposition 12.64.]{Milne}). Par conséquent, d'après \cite[Proposition 2.1.]{RmqApproxFaibleCT}, $E(K)$ est dense dans $\prod_{v\in \Sigma}E(K_v)$. On en déduit que $f(E(K)^n)$ est également dense dans $\prod_{v\in \Sigma}f(E(K_v)^n)$. Donc comme \linebreak $f(E(K)^n)\subset RG(K)$, $\overline{RG(K)}$ contient $\prod_{v\in \Sigma}f(E(K_v)^n)$ et en particulier $\prod_{v\in \Sigma}\Omega_v$. 
    \medskip
    
    Comme $\overline{RG(K)}\cap G(K_v)$ contient l'ouvert non vide $\Omega_v$, c'est un sous-groupe ouvert de $G(K_v)$.
\end{proof}

On se propose ensuite de montrer que $G_{v,i}(K_v)\cap \overline{RG(K)}$ est non borné pour un éventuel $G_{v,i}$ isotrope sur $K_v$.
Pour cela, on va s'aider d'un lemme sur les tores :

\begin{lem}\label{ToreDensité}
    Soit $T$ un $K$-tore. On a $RT(K_\Sigma)\subset \overline{RT(K)}$.
\end{lem}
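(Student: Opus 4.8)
The plan is to reduce everything to a quasi-trivial torus, where weak approximation is available, by combining a flasque resolution with the Colliot-Thélène--Sansuc description of $R$-equivalence on tori.

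First I would fix a flasque resolution $1\to S\to E\xrightarrow{\pi} T\to 1$ of $T$, with $E$ quasi-trivial and $S$ flasque; such a resolution exists and has already been used in the proof of Lemme \ref{RGOuvert}. The key input is the theorem of Colliot-Thélène--Sansuc (\cite{CTS}) asserting that, for such a resolution and any field $L$, one has $RT(L)=\pi(E(L))$, the image of the map induced by $\pi$ on rational points (the inclusion $\pi(E(L))\subseteq RT(L)$ coming from the rationality of $E$, and the reverse one from the flasqueness of $S$). Applying this over each $K_v$ gives $RT(K_v)=\pi(E(K_v))$, and hence, passing to products,
$$RT(K_\Sigma)=\prod_{v\in\Sigma}RT(K_v)=\prod_{v\in\Sigma}\pi(E(K_v))=\pi(E(K_\Sigma)),$$
where $\pi$ also denotes the coordinatewise map $E(K_\Sigma)\to T(K_\Sigma)$ and I use that the image of a product map is the product of the images.

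Next I would exploit that $E$, being quasi-trivial, is $K$-rational (\cite[Proposition 12.64.]{Milne}), so that by weak approximation for rational varieties (\cite[Proposition 2.1.]{RmqApproxFaibleCT}) the group $E(K)$ is dense in $E(K_\Sigma)=\prod_{v\in\Sigma}E(K_v)$. Since $\pi$ is a morphism of $K$-schemes, the induced map $E(K_\Sigma)\to T(K_\Sigma)$ is continuous for the adic topologies, so it carries the dense subset $E(K)$ to a dense subset of its image; concretely, $\pi(E(K_\Sigma))=\pi(\overline{E(K)})\subseteq\overline{\pi(E(K))}$.

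Finally, applying the Colliot-Thélène--Sansuc identity once more, this time over $K$ itself, gives $\pi(E(K))=RT(K)$. Chaining the three facts then yields $RT(K_\Sigma)=\pi(E(K_\Sigma))\subseteq\overline{\pi(E(K))}=\overline{RT(K)}$, which is the claim. The only genuine content beyond bookkeeping is the $R$-equivalence computation on tori, namely the equality $RT(L)=\pi(E(L))$; the remaining ingredients, the continuity of $\pi$ and weak approximation on the rational torus $E$, are both already in play in the preceding lemma, so I expect no real obstacle here.
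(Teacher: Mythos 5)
Your proof is correct and follows essentially the same route as the paper: a flasque resolution $1\to S\to E\to T\to 1$, the identification $RT(L)=\pi(E(L))$ of Colliot-Thélène--Sansuc, weak approximation on the quasi-trivial (hence $K$-rational) torus $E$, and continuity of $\pi$ for the adic topology. The only difference is that you spell out the intermediate identities more explicitly than the paper does; there is nothing to object to.
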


\begin{proof}
    Prenons une résolution flasque $1\rightarrow S\rightarrow E \rightarrow T\rightarrow 1$ de $T$. On sait que $E(K)$ est dense dans $E(K_\Sigma)$ par quasi-trivialité. Comme l'image de $E(K)$ dans $T(K)$ (resp. de $E(K_\Sigma)$ dans $T(K_\Sigma)$) est $RT(K)$ (resp. $RT(K_\Sigma)$), on a $RT(K_\Sigma)\subset \overline{RT(K)}$ (car $E(K_\Sigma)\rightarrow T(K_\Sigma)$ est continu pour la topologie adique d'après \cite[3.1.(ii)]{GGMB}).
\end{proof}

\begin{coro} \label{RGNonBorné}
    $G_{v,i}$ est $K_v$-isotrope si et seulement si $\overline{RG(K)}\cap G_{v,i}(K_v)$ est non borné.
\end{coro}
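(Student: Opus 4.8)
The plan is to prove both implications separately, using the two lemmas just established (\ref{RGOuvert} and \ref{ToreDensité}) for the two directions.

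First I would treat the direction ``$G_{v,i}$ isotrope $\Rightarrow$ $\overline{RG(K)}\cap G_{v,i}(K_v)$ non borné''. Since $G_{v,i}$ is $K_v$-isotrope, it contains a nontrivial $K_v$-split torus $T'$, and thus a one-parameter subgroup giving $\GG_m \hookrightarrow G_{v,i}$ defined over $K_v$. The images of elements $\lambda(t)$ for $t \in K_v^\times$ with $v(t) \to \pm\infty$ form an unbounded subset of $G_{v,i}(K_v)$ for the adic topology. The key point is then to show these lie in $\overline{RG(K)}$. Here I would invoke Lemme \ref{ToreDensité}: a suitable maximal torus $T$ of $G$ containing (a conjugate of) this split torus satisfies $RT(K_\Sigma) \subset \overline{RT(K)} \subset \overline{RG(K)}$. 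Since a split torus is $R$-trivial (every $K_v$-point is $R$-equivalent to $1$, as $\GG_m$ is rational), the unbounded split directions land in $RT(K_v) \subset RT(K_\Sigma)$, hence in $\overline{RG(K)}$, forcing $\overline{RG(K)}\cap G_{v,i}(K_v)$ to be unbounded.

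For the converse ``$\overline{RG(K)}\cap G_{v,i}(K_v)$ non borné $\Rightarrow$ $G_{v,i}$ isotrope'', I would argue contrapositively. If $G_{v,i}$ is $K_v$-anisotrope, then $G_{v,i}(K_v)$ is compact for the adic topology (this is the standard fact that the group of $K_v$-points of an anisotropic reductive group over a henselian discretely valued field is bounded). Every subset of a bounded group is bounded, so $\overline{RG(K)}\cap G_{v,i}(K_v)$ would be bounded, contradicting the hypothesis. This establishes the equivalence.

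The main obstacle I anticipate is making the isotropic direction fully rigorous: I must ensure that the unbounded elements produced by the split torus genuinely lie in the closure $\overline{RG(K)}$ via Lemme \ref{ToreDensité}, which requires choosing the maximal torus $T$ of $G$ over $K$ compatibly so that its base change to $K_v$ contains the relevant split part of $G_{v,i}$, and then checking the $R$-equivalence statement for split directions. The anisotropic direction relies on the compactness/boundedness of anisotropic $K_v$-points, which is where I would need to cite the appropriate boundedness result for the henselian discretely valued field $K_v$; this is the only genuinely external input, and pinning down the correct reference for the non-complete (merely henselian) case is the delicate part.
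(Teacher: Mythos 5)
Your proposal is correct and follows essentially the same route as the paper: the isotropic direction via a split $\GG_m$ in $G_{v,i}$, its $R$-triviality and Lemme \ref{ToreDensité}, and the converse via boundedness of the points of an anisotropic group over a henselian discretely valued field, for which the paper cites \cite[Theorem 2.2.9]{kaletha_prasad_2023} (boundedness, not compactness, being the right notion over a merely henselian $K_v$). The compatibility issue you flag is resolved exactly as you anticipate: one embeds an isotropic torus $T_i\subset G_{v,i}$ in a maximal torus $T$ of $D(G)_{K_v}$, uses weak approximation for tori to find $g\in D(G)(K_v)$ with $T':=gTg^{-1}$ defined over $K$, and uses the normality of $G_{v,i}$ in $D(G)_{K_v}$ to ensure $gT_ig^{-1}$ remains an isotropic torus of $G_{v,i}$.
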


\begin{proof}
    Le sens réciproque est évident d'après \cite[Theorem 2.2.9]{kaletha_prasad_2023}. Regardons le sens direct.

    Prenons $T_i\subset G_{v,i}$ un tore isotrope de $G_{v,i}$. Il est inclus dans un tore maximal $T$ de $D(G)_{K_v}$. Comme $D(G)$ est défini sur $K$, par approximation faible des tores (cf. par ex. la preuve de \cite[Lemma 2.]{Gu}), il existe $g\in D(G)(K_v)$ tel que $T':=g T g^{-1}$ soit défini sur $K$. Notons $T'_i := g T_i g^{-1}$. Comme $G_{v,i}$ est distingué dans $D(G)_{K_v}$, $T'_i$ est un tore de $G_{v,i}$ qui est d'ailleurs isotrope puisque $T_i$ l'est. Prenons un $\GG_m$ inclus dans $T'_i$. Comme $\GG_m$ est déployé, il est $R$-trivial. Par conséquent, on a la suite d'inclusions d'après le lemme \ref{ToreDensité} :
    $$K_v^\times = \GG_m(K_v) \subset RT_i'(K_v)\subset RT_i'(K_\Sigma) \subset RT'(K_\Sigma) \subset \overline{RT'(K)}.$$

    Comme $T'(K_\Sigma)$ est fermé dans $G(K_\Sigma)$, la notation $\overline{RT'(K)}$ désigne le même objet, que l'on se place dans $T'(K_\Sigma)$ ou bien dans $G(K_\Sigma)$. Comme on a évidemment $RT'(K)\subset RG(K)$, on a $\overline{RT'(K)}\subset \overline{RG(K)}$. Mais donc, $K_v^\times$ appartient à $\overline{RG(K)}\cap G_{v,i}(K_v)$, on en déduit que ce dernier est non borné comme voulu !
\end{proof}

On a donc enfin prouvé les lemmes nécessaires à notre théorème :

\begin{thm}\label{AdherenceRG}
    Soit $G$ un $K$-groupe réductif. On a :
    $$G(K_\Sigma)^+ = D(G)(K_\Sigma)^+ \subset \overline{RD(G)(K)}\subset \overline{RG(K)}\subset \overline{G(K)}\subset G(K_\Sigma).$$
\end{thm}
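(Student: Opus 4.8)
The plan is to establish the chain from left to right, observing that the content is entirely concentrated in the single inclusion $D(G)(K_\Sigma)^+ \subset \overline{RD(G)(K)}$, while everything else is formal. The rightmost inclusion $\overline{G(K)} \subset G(K_\Sigma)$ is the definition of the closure, and $\overline{RG(K)} \subset \overline{G(K)}$ follows from $RG(K) \subset G(K)$ and monotonicity of closures. Likewise $\overline{RD(G)(K)} \subset \overline{RG(K)}$ reduces to $RD(G)(K) \subset RG(K)$: any $R$-equivalence to the identity taking place inside $D(G)$ a fortiori takes place inside $G$, since composing the witnessing rational curves with the closed immersion $D(G) \hookrightarrow G$ keeps them in $G$. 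Finally, the leftmost equality $G(K_\Sigma)^+ = D(G)(K_\Sigma)^+$ is checked factor by factor: for each $v$ the relative root subgroups of $G_{K_v}$ coincide with those of $D(G)_{K_v}$ (Borel--Tits), so $G(K_v)^+ = D(G)(K_v)^+$, and one takes the product over $v \in \Sigma$.

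It remains to prove $D(G)(K_\Sigma)^+ \subset \overline{RD(G)(K)}$. I would apply the preliminary results to the semisimple group $G' := D(G)$, noting that $D(G') = G'$, so the $K_v$-almost simple factors of $D(G')_{K_v} = G'_{K_v}$ are exactly the $G_{v,i}$ already fixed. First I reduce to these factors: since the relative root subgroups of $G'_{K_v}$ are distributed among the $G_{v,i}$, the group $G'(K_v)^+$ is generated by the subgroups $G_{v,i}(K_v)^+$, $i \in I_v$. If $G_{v,i}$ is $K_v$-anisotropic it has no $K_v$-root subgroups, so $G_{v,i}(K_v)^+ = \{1\}$ and contributes nothing. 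If $G_{v,i}$ is $K_v$-isotropic, then Lemma \ref{RGOuvert} (applied to $G'$) shows that $\overline{RG'(K)} \cap G_{v,i}(K_v)$ is open in $G_{v,i}(K_v)$, while Corollary \ref{RGNonBorné} shows it is unbounded; Proposition \ref{Prasad}, applied to the $K_v$-almost simple group $G_{v,i}$, then yields $G_{v,i}(K_v)^+ \subset \overline{RG'(K)} \cap G_{v,i}(K_v)$. Combining the two cases gives $G'(K_v)^+ \subset \overline{RG'(K)}$ for every $v$, where $G'(K_v)^+$ is viewed inside $G(K_\Sigma)$ through the coordinate embedding at $v$.

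The last and most delicate point is to pass from this coordinatewise statement to the full product $G'(K_\Sigma)^+ = \prod_v G'(K_v)^+$, which a priori contains elements with infinitely many nontrivial coordinates when $\Sigma$ is infinite. Here I would exploit that $\overline{RG'(K)}$ is a closed subgroup of $G(K_\Sigma)$: the subgroup generated by the coordinate copies is the restricted product $\bigoplus_v G'(K_v)^+$ of finite-support elements, which lies in $\overline{RG'(K)}$ by the previous paragraph. Since finite-support elements are dense in a product carrying the product topology (a basic neighbourhood of $(y_v)$ constrains only finitely many coordinates, so truncating $(y_v)$ to those coordinates produces a finite-support element of $\bigoplus_v G'(K_v)^+$ inside it), the closure of $\bigoplus_v G'(K_v)^+$ already contains $\prod_v G'(K_v)^+$; as $\overline{RG'(K)}$ is closed, we obtain $\prod_v G'(K_v)^+ \subset \overline{RG'(K)}$, that is $D(G)(K_\Sigma)^+ \subset \overline{RD(G)(K)}$. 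I expect this density-in-the-product step, rather than the application of Prasad's theorem, to be the main obstacle, since it is precisely where the possible infinitude of $\Sigma$ must be absorbed and where the closedness of the relevant subgroups under the adic product topology is essential.
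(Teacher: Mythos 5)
Your proof is correct and follows essentially the same route as the paper: the content is concentrated in applying Proposition \ref{Prasad} to the isotropic factors $G_{v,i}$ via Lemma \ref{RGOuvert} and Corollaire \ref{RGNonBorné}, then generating $D(G)(K_v)^+$ from the $G_{v,i}(K_v)^+$ and running the argument for $G'=D(G)$, the remaining inclusions being formal. The only visible difference is that you spell out the passage from the coordinate copies $G'(K_v)^+$ to the full product $\prod_{v}G'(K_v)^+$ via density of finite-support elements in the closed subgroup $\overline{RD(G)(K)}$, a step the paper leaves implicit when $\Sigma$ is infinite.
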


\begin{proof}
    Prenons $v\in \Sigma$. On a que $G_{v,i}(K_v)\cap \overline{RG(K)}$ est sous-groupe ouvert (d'après le lemme \ref{RGOuvert}) non borné (d'après le corollaire \ref{RGNonBorné}) de $G_{v,i}(K_v)$ pour tout $G_{v,i}$ isotrope. Ceci implique alors que $G_{v,i}(K_v)^+\subset \overline{RG(K)}$ d'après la proposition \ref{Prasad}. 
    
    Observons ensuite d'après \cite[Theorem 21.51.]{Milne}, le morphisme naturel \linebreak $\prod_{i\in I_v} G_{v,i}\rightarrow D(G)_{K_v}$ est une isogénie. D'après \cite[Corollaire 6.3.]{HomAbstraits}, il envoie surjectivement $\prod_{i\in I_v} G_{v,i}(K_v)^+$ sur $D(G)(K_v)^+$. Autrement dit, les $G_{v,i}(K_v)^+$ engendrent $D(G)(K_v)^+$. Par ailleurs, notons que $D(G)(K_\Sigma)^+$ et $G(K_\Sigma)^+$ sont les mêmes groupes dans $G(K_\Sigma)$ grâce à \cite[Corollaire 6.3.]{HomAbstraits} appliqué à $D(G)\rightarrow G$. 
    
    On conclut donc des deux paragraphes précédents que $G(K_\Sigma)^+\subset \overline{RG(K)}$. En appliquant ce que l'on vient de faire pour $G=D(G)$, on trouve $D(G)(K_\Sigma)^+ \subset \overline{RD(G)(K)}$. Il suffit alors d'utiliser que $RD(G)(K)\subset RG(K)\subset G(K)$ et que les inclusions passent à l'adhérence pour en déduire le théorème.
\end{proof}

\begin{rmq}
    De toute évidence, $RG(K)$ et donc $\overline{RG(K)}$ est inclus dans $\prod_{v\in \Sigma}RG(K_v)$ (un produit quelconque de fermés est fermé). Par conséquent, si $G$ est semi-simple simplement connexe, le théorème précédent dit que, si pour tout $v\in \Sigma$, $G_{K_v}$ est strictement isotrope, alors $\prod_{v\in \Sigma}RG(K_v)=G(K_\Sigma)^+ = \overline{RG(K)}$ (d'après \cite[Théorème 7.2.]{BourbakiGille}). Dans le cas où on a un $G_{v,i}$ anisotrope, on ne sait pas si $RG_{v,i}(K_v)\subset \overline{RG(K)}$ ; cela impliquerait l'égalité $\prod_{v\in \Sigma}RG(K_v)=\overline{RG(K)}$ en toute généralité (puisque dans ce cas, $G_{K_v}=\prod_{i\in I_v} G_{v,i}$, cf. \cite[Theorem 24.3.]{Milne}). 
    \medskip

    Il y a toutefois un cas où l'on peut conclure que l'égalité est effectivement réalisée :
    \medskip
    
    Pour un groupe de type $^{1}A_n$, c'est-à-dire de la forme $G:=\mathrm{SL}_1(D)$, où $D$ est une algèbre à division de dimension finie sur $K$, on a bien $RG(K_v)\subset \overline{RG(K)}$. En effet, \linebreak $RG(K)=[D^\times,D^\times]$ d'après \cite{Voskresenskii}. De même, en posant $D_v:=D\otimes_KK_v$, \linebreak $RG(K_v)=[D_v^\times,D_v^\times]$. Par conséquent, le fait que $[D_v^\times,D_v^\times]\subset \overline{[D^\times,D^\times]}$ (puisque $D^\times$ vérifie l'approximation faible) donne le résultat.
\end{rmq}

On a également la proposition suivante en complément :

\begin{prop}\label{ToresRG}
    Soit $T$ un $K$-tore de $G$. On a $RT(K_\Sigma)\subset \overline{RG(K)}$. En particulier, si $T$ est $R$-trivial (par ex. si $T$ est déployé), alors on a $T(K_\Sigma)\subset \overline{RG(K)}$.

    Par ailleurs, pour $T$ un $K_\Sigma$-tore inclus dans $G_{K_\Sigma}$ (c'est à dire la donnée de tores dans chaque $G_{K_v}$), il existe $g\in G(K_\Sigma)$ tel que l'on ait $gRT(K_\Sigma)g^{-1}\subset \overline{RG(K)}$. En particulier, si $T$ est $R$-trivial (par ex. un tore déployé), alors on a $gRT(K_\Sigma)g^{-1}\subset \overline{RG(K)}$.
\end{prop}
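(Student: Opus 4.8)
Le plan est de traiter d'abord le cas d'un $K$-tore, où tout découle du lemme \ref{ToreDensité}, puis d'en déduire le cas d'un $K_\Sigma$-tore par conjugaison place par place suivie d'un passage à la limite. Pour la première partie, comme $T$ est un sous-$K$-tore de $G$, toute chaîne de $R$-équivalence reliant l'élément neutre à un point de $T(K)$ dans $T$ en est encore une dans $G$ ; on a donc l'inclusion $RT(K)\subset RG(K)$, d'où $\overline{RT(K)}\subset \overline{RG(K)}$. En combinant avec le lemme \ref{ToreDensité}, on obtient
$$RT(K_\Sigma)\subset \overline{RT(K)}\subset \overline{RG(K)}.$$
Si $T$ est $R$-trivial, alors $RT_v(K_v)=T_v(K_v)$ pour chaque $v$, donc $RT(K_\Sigma)=T(K_\Sigma)$, ce qui fournit l'inclusion annoncée $T(K_\Sigma)\subset \overline{RG(K)}$.

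Pour la seconde partie, on plonge, pour chaque $v\in \Sigma$, le tore $T_v$ dans un tore maximal $M_v$ de $G_{K_v}$ ; puis, par approximation faible des tores (comme dans la preuve du corollaire \ref{RGNonBorné}, via \cite[Lemma 2.]{Gu}), on choisit $g_v\in G(K_v)$ tel que $T'_v:=g_v M_v g_v^{-1}$ soit défini sur $K$. On pose $g:=(g_v)_v\in G(K_\Sigma)$ et $T''_v:=g_v T_v g_v^{-1}\subset (T'_v)_{K_v}$. Comme la conjugaison par $g_v$ préserve la $R$-équivalence, on a
$$g\,RT(K_\Sigma)\,g^{-1}=\prod_{v\in \Sigma}RT''_v(K_v)\subset \prod_{v\in \Sigma}RT'_v(K_v).$$
L'idée est alors d'appliquer la première partie coordonnée par coordonnée. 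Soit $\iota_v : G(K_v)\hookrightarrow G(K_\Sigma)$ le plongement plaçant $1$ sur les autres facteurs. Comme $1\in RT'_w(K_w)$ pour tout $w$, on a $\iota_v\big(RT'_v(K_v)\big)\subset RT'_v(K_\Sigma)$, et la première partie appliquée au $K$-tore $T'_v\subset G$ donne $RT'_v(K_\Sigma)\subset \overline{RG(K)}$. Ainsi $\iota_v\big(RT''_v(K_v)\big)\subset \overline{RG(K)}$ pour tout $v$.

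Il reste à recoller ces informations. Comme $RG(K)$ est un sous-groupe de $G(K)$ (la classe de $R$-équivalence de l'élément neutre), $\overline{RG(K)}$ est un sous-groupe fermé de $G(K_\Sigma)$. Pour $(y_v)_v\in \prod_{v}RT''_v(K_v)$ et toute partie finie $S\subset \Sigma$, le produit partiel $\prod_{v\in S}\iota_v(y_v)$, bien défini car à supports disjoints, appartient à $\overline{RG(K)}$ comme produit fini d'éléments de ce groupe ; lorsque $S$ croît vers $\Sigma$, ces produits partiels convergent vers $(y_v)_v$ pour la topologie produit (chaque coordonnée est finalement stationnaire), de sorte que $(y_v)_v\in \overline{RG(K)}$ par fermeture. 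On conclut $g\,RT(K_\Sigma)\,g^{-1}\subset \overline{RG(K)}$, et le cas $R$-trivial, où $RT(K_\Sigma)=T(K_\Sigma)$, en découle.

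La principale difficulté réside dans le cas où $\Sigma$ est infini : on ne peut en général pas conjuguer simultanément tous les $T_v$ sur un unique $K$-tore, et il faut donc réduire, grâce au plongement $\iota_v$ et à la première partie, à des contributions place par place, puis reconstituer un élément arbitraire du produit comme limite — pour la topologie produit — de produits partiels finis qui restent tous dans le sous-groupe fermé $\overline{RG(K)}$.
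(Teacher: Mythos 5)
Votre preuve est correcte et suit essentiellement la même démarche que celle de l'article : le lemme \ref{ToreDensité} combiné à l'inclusion $RT(K)\subset RG(K)$ (et à la fermeture de $T(K_\Sigma)$ dans $G(K_\Sigma)$) pour la première partie, puis la conjugaison place par place par approximation faible des tores et la réduction aux $K$-tores $T'_v$ pour la seconde. Vous explicitez seulement deux points laissés implicites dans l'article — le passage par un tore maximal pour appliquer l'approximation faible, et le recollement des contributions locales comme limites de produits partiels finis dans le sous-groupe fermé $\overline{RG(K)}$ lorsque $\Sigma$ est infini — ce qui est bienvenu mais ne change pas l'argument.
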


\begin{proof}
    Soit $T$ un $K$-tore de $G$. On sait déjà d'après le lemme \ref{ToreDensité} que \linebreak $RT(K_\Sigma)\subset \overline{T(K)}$. Comme $T(K_\Sigma)$ est fermé dans $G(K_\Sigma)$, la notation $\overline{RT(K)}$ désigne le même objet, que l'on se place dans $T(K_\Sigma)$ ou bien dans $G(K_\Sigma)$. Comme on a évidemment $RT(K)\subset RG(K)$, on a $\overline{RT(K)}\subset \overline{RG(K)}$. D'où $RT(K_\Sigma)\subset \overline{RG(K)}$.

    Prenons désormais $T$ un $K_\Sigma$-tore de $G_{K_\Sigma}$. On écrit $T=\prod_{v\in \Sigma}T_v$ tel que pour tout $v\in \Sigma$, $T_v$ est un $K_v$-tore. Prenons $v\in \Sigma$. Par approximation faible des tores (cf. par ex. la preuve de \cite[Lemma 2.]{Gu}), il existe $g_v\in G(K_v)$ tel que $T'_v:=g_vT_vg_v^{-1}$ soit défini sur $K$.
    
    Observons alors, d'après le début de la preuve, les inclusions suivantes :
    $$g_vRT_v(K_v)g_v^{-1}=RT'_v(K_v)\subset RT'_v(K_\Sigma) \subset \overline{RG(K)}.$$
    D'où $gRT(K_\Sigma)g^{-1}\subset \overline{RG(K)}$ en posant $g=(g_v)_{v\in \Sigma}$.
\end{proof} 

\begin{rmq}
    On ignore en général si $\overline{RG(K)}$ (ou $\overline{G(K)}$) est un sous-groupe distingué de $G(K_\Sigma)$.
\end{rmq}

Terminons cette partie avec le lemme général suivant. 

\begin{lem}\label{Adherence}
    Soit $H$ un groupe topologique, $E$ une partie de $H$ et $U$ un sous-groupe ouvert de $H$.
    \begin{enumerate}
        \item L'ensemble $EU:=\{eu \mid (e,u)\in E\times U\}$ est ouvert et fermé dans $H$.
        \item On a $EU=\overline{E}U$, où $\overline{E}$ est l'adhérence de $E$ dans $H$.
    \end{enumerate}
\end{lem}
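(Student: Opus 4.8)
Le plan est de traiter d'abord le point (1), puis d'en déduire le point (2) presque immédiatement.

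Pour (1), j'écrirais $EU=\bigcup_{e\in E}eU$ comme une réunion de classes à gauche de $U$. Le caractère ouvert est alors clair : chaque $eU$ est ouvert puisque $U$ l'est et que la translation à gauche par $e$ est un homéomorphisme de $H$ ; une réunion d'ouverts étant ouverte, $EU$ est ouvert. Pour le caractère fermé, j'utiliserais le fait classique qu'un sous-groupe ouvert d'un groupe topologique est automatiquement fermé : les classes à gauche $gU$ partitionnent $H$ et sont toutes ouvertes, de sorte que le complémentaire $H\setminus EU$, qui est la réunion des classes $gU$ ne rencontrant pas $E$, est lui aussi ouvert. Ainsi $EU$ est à la fois ouvert et fermé.

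Pour (2), l'inclusion $EU\subset \overline{E}\,U$ est immédiate puisque $E\subset \overline{E}$. Réciproquement, comme $1\in U$ on a $E\subset EU$, et comme $EU$ est fermé d'après (1), on obtient $\overline{E}\subset EU$ en passant à l'adhérence. En multipliant à droite par $U$ et en utilisant $UU=U$ (car $U$ est un sous-groupe), il vient
$$\overline{E}\,U\subset EU\cdot U=E(UU)=EU,$$
d'où l'égalité recherchée.

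Il n'y a ici aucun réel obstacle : le seul point à ne pas négliger est le fait que tout sous-groupe ouvert d'un groupe topologique est également fermé, qui fournit à la fois le caractère fermé de $EU$ dans (1) et le passage de $E$ à son adhérence dans (2).
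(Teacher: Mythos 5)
Votre preuve est correcte et suit essentiellement la même démarche que celle de l'article : vous explicitez directement que $EU$ est une réunion de classes à gauche de $U$ (donc ouvert et, par partition en ouverts, fermé), là où l'article invoque la discrétude de $H/U$ via Bourbaki et écrit $EU=p^{-1}(p(E))$. Pour le point (2), votre argument ($\overline{E}\subset EU$ par fermeture, puis multiplication par $U$) est une variante immédiate de la chaîne $EU\subset\overline{E}U\subset\overline{EU}=EU$ de l'article.
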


\begin{proof}
D'après \cite[Chapitre III, \S, 5., Proposition 14.]{Bourbaki3_3-4}, $H/U$ vu en tant qu'espace topologique homogène est discret. Notons $p:H\rightarrow H/U$ la projection. En particulier, $p(E)$ est ouvert et fermé dans $H/U$. Par conséquent, $EU=p^{-1}(p(E))$ est ouvert et fermé dans $H$ par continuité de $p$.

Le second point provient du précédent. En effet, on a alors : $EU\subset \overline{E}U\subset \overline{EU} = EU$.
\end{proof}

Comme cela est vu en partie \ref{conclusion}, ce lemme nous permet de faire le pont entre $\overline{G(K)}$ et le double quotient obtenu par les méthodes de recollements.

\section{Résultats principaux}\label{conclusion}

Reprenons maintenant le contexte de l'énoncé \ref{QuestionBFF}, c'est-à-dire $R$ semi-local de Dedekind, $K$ son corps de fractions, $\Rt$ et $\Kt$, etc. Par ailleurs, toutes les définitions de \cite{Article1} se généralisent à $\Kt=\prod_{\mm\in \Specm(R)}\Kt_{\mm}$ en considérant tout facteur par facteur.

Rappelons notamment qu'un sous-groupe $H$ de $G(\Kt)$ est dit global s'il est ouvert et qu'il contient $G(\Kt)^+$. Un tel groupe est dit également conforme si son action sur $\ImmBT(G_{\Kt})$ préserve les types des facettes (cf. \cite[Définition 2.1.]{Article1}). 
\medskip

Commençons par récolter des informations relatives au double quotient.

\begin{lem}\label{DoubleQuotientSimplifie}
    Soit $\Gc$ un schéma en groupes localement de présentation finie et séparé sur $\Rt$. Supposons que $G:=\Gc_{\Kt}$ soit réductif. Prenons $H$ un sous-groupe global de $G(\Kt)$, un appartement $\Ac$ de $\ImmBT(G_{\Kt})$ et $\Cc$, une chambre dans $\Ac$. Supposons que $H_{(\Ac,\Cc)}\subset \Gc(\Rt)$.
    \begin{enumerate}    
        \item On a $H\subset G(\Kt)^+\,\Gc(\Rt)$.
        \item  Si de plus $\Gc$ est défini sur $R$, et donc $G$ sur $K$, pour $g\in G(\Kt)$, on a également $G(K)\,g\, \Gc(\Rt)=\overline{G(K)}\,g\:\Gc(\Rt)$, $g\,G(\Kt)^+\,\Gc(\Rt)\subset G(K)\,g\:\Gc(\Rt)$, et $g H \subset G(K)\,g\:\Gc(\Rt)$. 
    \end{enumerate}
\end{lem}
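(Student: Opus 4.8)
La stratégie est de traiter d'abord (1) par transitivité forte de l'action de $G(\Kt)^+$ sur l'immeuble, puis d'en déduire (2) en combinant le lemme \ref{Adherence} avec le théorème \ref{AdherenceRG}. Pour (1), je partirais d'un élément $h\in H$ et je regarderais la paire $(h\Ac,h\Cc)$. Le point clé est que $G(\Kt)^+$ agit de façon fortement transitive sur $\ImmBT(G_{\Kt})$, c'est-à-dire transitivement sur les paires (appartement, chambre contenue dans cet appartement) : il existe donc $u\in G(\Kt)^+$ tel que $u\Ac=h\Ac$ et $u\Cc=h\Cc$. Alors $u^{-1}h$ stabilise la paire $(\Ac,\Cc)$, et comme $u\in G(\Kt)^+\subset H$ et $h\in H$, on a $u^{-1}h\in H_{(\Ac,\Cc)}\subset\Gc(\Rt)$ par hypothèse. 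D'où $h=u\,(u^{-1}h)\in G(\Kt)^+\,\Gc(\Rt)$, ce qui établit (1). Il est essentiel de disposer ici de la transitivité sur les \emph{paires} (et non seulement sur les chambres), puisque l'hypothèse fait intervenir à la fois $\Ac$ et $\Cc$.

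Pour (2), je noterais d'abord que $\Gc(\Rt)=\prod_{\mm}\Gc(\Rt_{\mm})$ est ouvert dans $G(\Kt)$ (un fixateur de facette est ouvert, et le produit est fini car $R$ est semi-local), de sorte que tout conjugué $g\,\Gc(\Rt)\,g^{-1}$ l'est également. La première égalité s'obtient alors en appliquant le lemme \ref{Adherence} au groupe topologique $G(\Kt)$, à la partie $E=G(K)$ et au sous-groupe ouvert $U=g\,\Gc(\Rt)\,g^{-1}$ : on obtient $G(K)\,g\,\Gc(\Rt)\,g^{-1}=\overline{G(K)}\,g\,\Gc(\Rt)\,g^{-1}$, puis on multiplie à droite par $g$ pour conclure $G(K)\,g\,\Gc(\Rt)=\overline{G(K)}\,g\,\Gc(\Rt)$.

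Pour la deuxième inclusion, j'utiliserais que $G(\Kt)^+$ est distingué dans $G(\Kt)$ (il est engendré par les sous-groupes de racines, que la conjugaison permute), d'où $g\,G(\Kt)^+=G(\Kt)^+\,g$ et donc $g\,G(\Kt)^+\,\Gc(\Rt)=G(\Kt)^+\,g\,\Gc(\Rt)$. Le théorème \ref{AdherenceRG}, appliqué à l'ensemble de valuations $\Sigma=\Specm(R)$ et aux corps $\Kt_{\mm}$, donne $G(\Kt)^+\subset\overline{G(K)}$ ; en combinant avec la première égalité, $G(\Kt)^+\,g\,\Gc(\Rt)\subset\overline{G(K)}\,g\,\Gc(\Rt)=G(K)\,g\,\Gc(\Rt)$, ce qui fournit $g\,G(\Kt)^+\,\Gc(\Rt)\subset G(K)\,g\,\Gc(\Rt)$. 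Enfin, la dernière inclusion est immédiate en combinant (1) et la précédente : $g\,H\subset g\,G(\Kt)^+\,\Gc(\Rt)\subset G(K)\,g\,\Gc(\Rt)$.

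L'obstacle principal me paraît être, dans (1), la justification rigoureuse de la transitivité forte de $G(\Kt)^+$ sur les paires (appartement, chambre) de $\ImmBT(G_{\Kt})$, ainsi que l'identification précise du stabilisateur $H_{(\Ac,\Cc)}$ (au sens de \cite{Article1}), afin de garantir que $u^{-1}h$ tombe effectivement dans le sous-groupe supposé inclus dans $\Gc(\Rt)$. Une fois ce point géométrique acquis, la partie (2) n'est plus qu'une manipulation formelle reposant sur trois ingrédients déjà disponibles : l'ouverture de $\Gc(\Rt)$ (via le lemme \ref{Adherence}), la normalité de $G(\Kt)^+$ dans $G(\Kt)$, et l'inclusion $G(\Kt)^+\subset\overline{G(K)}$ issue du théorème \ref{AdherenceRG}.
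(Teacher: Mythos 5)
Your proof is correct and follows essentially the same route as the paper: for point (1) the paper simply invokes \cite[Lemme 2.8.]{Article1} for the decomposition $H = G(\Kt)^+\,H_{(\Ac,\Cc)}$ of a global subgroup, which is precisely the strong-transitivity argument you sketch (so the "obstacle" you flag is exactly what that citation outsources), and for point (2) it uses the same three ingredients — Lemma \ref{Adherence}, normality of $G(\Kt)^+$ in $G(\Kt)$, and the inclusion $G(\Kt)^+\subset\overline{G(K)}$ from Theorem \ref{AdherenceRG}. One small correction: the openness of $\Gc(\Rt)$ in $G(\Kt)$ does not come from $\Gc$ being a facet fixator (it is only assumed separated and locally of finite presentation here); the paper deduces it from \cite[3.5.1 Lemme.]{GMBAdeles}.
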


\begin{proof}
    ${}$
    \begin{enumerate}
        \item Notons que $G(\Kt)^+\,\Gc(\Rt)$ est un sous-groupe de $G(\Kt)$ puisque $G(\Kt)^+$ est distingué dans $G(\Kt)$ (cf. \cite[6.1.]{HomAbstraits}).
        On a donc $H=G(\Kt)^+\,H_{(\Ac,\Cc)}\subset G(\Kt)^+\,\Gc(\Rt)$ d'après \cite[Lemme 2.8.]{Article1}.

        \item D'après \ref{Adherence}, on a $G(K)\,g\:\Gc(\Rt)=\overline{G(K)g}\:\Gc(\Rt)=\overline{G(K)}\:g\:\Gc(\Rt)$. En effet, il suffit de voir que $\Gc(\Rt)$ est un sous-groupe ouvert de $G(\Kt)$. C'est bien le cas car les $\Gc(\Rt_{\mm})$ sont ouverts dans les $G(\Kt_{\mm})$ d'après \cite[3.5.1 Lemme.]{GMBAdeles}.

        Ceci étant, on peut utiliser le théorème \ref{AdherenceRG} qui nous dit que $G(\Kt)^+\subset \overline{G(K)}$. En particulier, $$g\:G(\Kt)^+\,\Gc(\Rt) =G(\Kt)^+\:g\:\Gc(\Rt) \subset \overline{G(K)}\:g\:\Gc(\Rt)=G(K)\:g\:\Gc(\Rt).$$
        D'où le résultat d'après la première partie du lemme.
    \end{enumerate}
\end{proof}

On en déduit donc :

\begin{prop}\label{GroupeDeClasses}
    Reprenons le contexte du lemme précédent ($\Gc$ supposé défini sur $R$). \linebreak Lorsque cela a du sens (par exemple quand $G(K)\Gc(\Rt)$ est un sous-groupe de $G(\Kt)$), on note $c'(\Gc):=G(\Kt)/G(K)\,\Gc(\Rt)$. Supposons que $D(Z(\Kt))\subset H$, où $Z$ est un sous-groupe de Levi de $G_{\Kt}$.
    \begin{enumerate}
        \item $H$ et $G(K)\,H$ sont des sous-groupes distingués de $G(\Kt)$ de quotient abélien.\\ On note $c_H(G):=G(\Kt)/G(K)\,H$ le quotient de $G(\Kt)$ par $G(K)\,H$.
        \item Si $H_{(\Ac,\Cc)} \subset \Gc(\Rt)$ (resp. $H = G(\Kt)^+\,\Gc(\Rt)$), alors $G(K)\,\Gc(\Rt)$ est un sous-groupe distingué de $G(\Kt)$ contenant $G(K)\,H$ (resp. est égal à $G(K)\,H$) et de quotient abélien. D'où une flèche surjective (resp. bijective) $c_H(G)\rightarrow c'(\Gc)$.
        De plus, on a une bijection canonique : $$c(\Gc):=G(K)\backslash G(\Kt)/\Gc(\Rt) \overset{\sim}{\rightarrow} G(\Kt)/G(K)\,\Gc(\Rt)=:c'(\Gc).$$
    \end{enumerate}
\end{prop}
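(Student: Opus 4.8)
The plan is to prove the two assertions in order, reducing everything to the single fact that the relevant subgroups contain the commutator subgroup $[G(\Kt),G(\Kt)]$. For (1) the target is thus to show $[G(\Kt),G(\Kt)]\subseteq H$: once this holds, $H$ is automatically normal (any subgroup containing the commutator subgroup is normal), $G(\Kt)/H$ is abelian, and the same conclusions follow for $G(K)\,H$ since it too contains $[G(\Kt),G(\Kt)]$. To obtain the inclusion I would invoke the structural decomposition $G(\Kt)=Z(\Kt)\,G(\Kt)^+$, which follows from the Bruhat decomposition: $Z$ contains a minimal Levi, and $G(\Kt)$ is generated by a minimal Levi together with the root groups, the latter lying in $G(\Kt)^+$. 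Working modulo the normal subgroup $G(\Kt)^+$, the quotient is the image of $Z(\Kt)$, whose commutators land in $D(Z)(\Kt)$ because $Z(\Kt)/D(Z)(\Kt)$ embeds into the abelian group $(Z/D(Z))(\Kt)$. Hence $[G(\Kt),G(\Kt)]\subseteq G(\Kt)^+\,D(Z)(\Kt)\subseteq H$, using $G(\Kt)^+\subseteq H$ (globality) and the hypothesis $D(Z(\Kt))\subseteq H$.

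For the first statements of (2) I would first record that $N:=G(K)\,\Gc(\Rt)$ is a subgroup: Lemma \ref{DoubleQuotientSimplifie}(2) taken at $g=1$ gives $H\subseteq G(K)\,\Gc(\Rt)$, whence $G(K)\,\Gc(\Rt)=G(K)\,H\,\Gc(\Rt)$, which is a subgroup because $G(K)\,H$ is normal by (1). It is then itself normal with abelian quotient, being the preimage of a subgroup of the abelian quotient $G(\Kt)/G(K)\,H$; this yields the surjection $c_H(G)\to c'(\Gc)$. In the case $H=G(\Kt)^+\,\Gc(\Rt)$ one also has $\Gc(\Rt)\subseteq H$, so $G(K)\,\Gc(\Rt)=G(K)\,H$ and the map is a bijection.

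The delicate point is the canonical bijection $c(\Gc)\cong c'(\Gc)$. In general a double coset $G(K)\,g\,\Gc(\Rt)$ is strictly smaller than the coset $g\,N$, so the surjection $c(\Gc)\to c'(\Gc)$, $G(K)g\Gc(\Rt)\mapsto gN$, need not be injective for an arbitrary normal $N$; the plan is therefore to prove the sharp equality $G(K)\,g\,\Gc(\Rt)=g\,N$ for every $g$, equivalently $G(K)\,(g\,\Gc(\Rt)\,g^{-1})=N$. Writing $\Gc':=g\Gc g^{-1}$, Lemma \ref{Adherence} applied to the open subgroup $\Gc'(\Rt)$ gives $G(K)\,\Gc'(\Rt)=\overline{G(K)}\,\Gc'(\Rt)$, and by Theorem \ref{AdherenceRG} one has $\overline{G(K)}\supseteq G(\Kt)^+$, so this set contains $G(\Kt)^+\,\Gc'(\Rt)$. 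The crux is that $P:=G(\Kt)^+\,\Gc(\Rt)$ is normal in $G(\Kt)$: this follows from (1), because under the present hypotheses $P=H\,\Gc(\Rt)$ (using $H=G(\Kt)^+\,H_{(\Ac,\Cc)}$ and $H_{(\Ac,\Cc)}\subseteq\Gc(\Rt)$), which is the preimage of a subgroup of the abelian quotient $G(\Kt)/H$. Normality of $P$ gives $G(\Kt)^+\,\Gc'(\Rt)=g\,P\,g^{-1}=P\supseteq\Gc(\Rt)$; combined with $N=\overline{G(K)}\,\Gc(\Rt)$ and $\overline{G(K)}\supseteq G(\Kt)^+$, this forces $\overline{G(K)}\,\Gc'(\Rt)=\overline{G(K)}\,\Gc(\Rt)=N$.

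Putting these together yields $G(K)\,g\,\Gc(\Rt)=N g=gN$, so each double coset is exactly one coset of $N$ and the natural map $c(\Gc)\to c'(\Gc)$ is a bijection. I expect this last step — pinpointing the normality of $P=G(\Kt)^+\,\Gc(\Rt)$ as the precise reason the double cosets coincide with the cosets of $N$ — to be the main obstacle, the remaining inclusions being formal manipulations with the lemmas already at our disposal.
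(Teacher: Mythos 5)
Your proposal follows essentially the same route as the paper: point (1) rests on the containment $D(G(\Kt))\subset G(\Kt)^+\,D(Z(\Kt))\subset H$ (which the paper simply quotes from \cite[Lemme 1.5.]{Article1} rather than re-deriving from $G(\Kt)=Z(\Kt)\,G(\Kt)^+$ as you do), and the canonical bijection $c(\Gc)\cong c'(\Gc)$ is obtained, exactly as in the paper, from the normality of $G(\Kt)^+\,\Gc(\Rt)$ combined with $G(\Kt)^+\subset\overline{G(K)}$ (théorème \ref{AdherenceRG}) and the lemme \ref{Adherence}; your conjugation formulation $G(K)\,g\,\Gc(\Rt)\,g^{-1}=N$ and the paper's direct manipulation $G(K)\,g\,\Gc(\Rt)=\Gc(\Rt)\,G(K)\,g$ are two phrasings of the same computation.

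One step as written does not follow and should be repaired: you bound the commutators of $Z(\Kt)$ by $D(Z)(\Kt)$ (the $\Kt$-points of the derived group scheme) and then conclude $G(\Kt)^+\,D(Z)(\Kt)\subset H$ \emph{from the hypothesis} $D(Z(\Kt))\subset H$; but $D(Z)(\Kt)$ can be strictly larger than the abstract commutator subgroup $D(Z(\Kt))$ (this distinction is precisely the Whitehead-group phenomenon), so the hypothesis does not give $D(Z)(\Kt)\subset H$. The fix is immediate and already implicit in your own argument: commutators of elements of $Z(\Kt)$ lie by definition in $D(Z(\Kt))$, so the correct chain is $D(G(\Kt))\subset G(\Kt)^+\,D(Z(\Kt))\subset H$, which is exactly the statement of \cite[Lemme 1.5.]{Article1} used by the paper.
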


\begin{proof}
${}$
    \begin{enumerate}
        \item D'après \cite[Lemme 1.5.]{Article1}, on a $D(G(\Kt))=G(\Kt)^+\,D(Z(\Kt))$. Par conséquent, $D(G(\Kt))=G(\Kt)^+\,D(Z(\Kt))\subset H \subset G(K)\,H$. Comme l'image de $H$ et de $G(K)H$ dans $G(\Kt)^{\mathrm{ab}}$ sont des groupes distingués (car abéliens), il en est de même pour $H$ et $G(K)\,H$, et leurs quotients par $G(\Kt)$ sont bien sûr abéliens.

        \item  D'après le lemme \ref{DoubleQuotientSimplifie}, on a $H\subset G(\Kt)^+\,\Gc(\Rt)$. On observe alors :
        $$D(G(\Kt))\subset H \subset G(\Kt)^+\,\Gc(\Rt)\subset \overline{G(K)}\,\Gc(\Rt) = G(K)\,\Gc(\Rt).$$ 
        On conclut alors comme précédemment.
        \medskip
        
        Pour le dernier point, il suffit d'observer que, étant donné $g\in G(\Kt)$, on a :
        $$\overline{G(K)} \left (g\,G(\Kt)^+\,\,\Gc(\Rt)\right) = \overline{G(K)} \left(G(\Kt)^+\,g\,\Gc(\Rt)\right) = \overline{G(K)}\,G(\Kt)^+\,g\,\Gc(\Rt) = \overline{G(K)}\,g\,\Gc(\Rt).$$

        D'où finalement :
        $$\resizebox{\columnwidth}{!}{$\displaystyle
        G(K)\,g\,\Gc(\Rt)=\overline{G(K)}\,g\,\Gc(\Rt) = \overline{G(K)}\,g\left(\Gc(\Rt)\,\,\,G(\Kt)^+\right) = \left(\Gc(\Rt)\,\,G(\Kt)^+\right)\overline{G(K)}\,g=\Gc(\Rt)\overline{G(K)}\,g=\Gc(\Rt)\,G(K)\,g$}$$
        puisque $G(\Kt)^+\Gc(\Rt)$ est un sous-groupe distingué de $G(\Kt)$ (car contient $D(G(\Kt))$) et $G(K)\,g\,\Gc(\Rt)=\overline{G(K)}\,g\,\Gc(\Rt)$ d'après le lemme \ref{DoubleQuotientSimplifie}.
    \end{enumerate}
\end{proof}

\begin{lem}\label{MajorationDoubleQuotient}
    Soient $G$ un $\Kt$-groupe réductif, $S$ un $\Kt$-tore déployé de $G$ et $Z:=Z_{G_{\Kt}}(S)$ \linebreak ($Z$ désigne donc cette fois un sous-groupe de Levi de $G$ non nécessairement minimal). \linebreak
    Notons $p:Z(\Kt)\rightarrow (Z/S)(\Kt)$ la projection canonique.
    Prenons également $H$ un sous-groupe global de $G(\Kt)$ tel que $D(Z(\Kt))\subset H$.

    \begin{enumerate}
        \item Le sous-groupe $H\cap Z(\Kt)$ est global dans $Z(\Kt)$.\\
        De plus, $p$ est ouvert et $p(H\cap Z(\Kt))$ est global dans $(Z/S)(\Kt)$.
        \item On a : $$(Z/S)(\Kt)/p(H\cap Z(\Kt)) \overset{\sim}{\leftarrow} Z(\Kt)/S(\Kt)\,(H\cap Z(\Kt)) \twoheadrightarrow c_H(G).$$
        \item Si de plus $S$ est défini sur $K$, alors $Z$ également, et on a : 
        \begin{alignat*}{2}
            c_{p(H\cap Z(\Kt))}(Z/S) & = (Z/S)(\Kt)/(p(H\cap Z(\Kt))\,(Z/S)(K))\\
            & \overset{\sim}{\leftarrow} Z(\Kt)/((H\cap Z(\Kt))\,Z(K)\,S(\Kt)) = c_{H\cap Z(\Kt)}(Z)\twoheadrightarrow c_H(G).
        \end{alignat*}
    \end{enumerate}
\end{lem}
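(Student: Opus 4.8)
The plan is to handle the three items in order, the substance being concentrated in the surjection onto $c_H(G)$ in (2).

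For (1), $H\cap Z(\Kt)$ is open in $Z(\Kt)$ because $H$ is open and $Z(\Kt)$ carries the induced topology, and it contains $Z(\Kt)^+$ because the root subgroups of the Levi $Z=Z_G(S)$ are among those of $G$, whence $Z(\Kt)^+\subset G(\Kt)^+\subset H$; so it is global. That $p$ is open follows from the smoothness of the $S$-torsor $Z\to Z/S$ via \cite[3.1.2 Lemme]{GGMB}, exactly as in the proof of Lemma \ref{RGOuvert}, and $p$ is surjective since $S$ is split, so that $H^1(\Kt_\mm,S)=0$ by Hilbert 90. Consequently $p(H\cap Z(\Kt))$ is open as the image of an open set under an open map; and since $p$ identifies the root subgroups of $Z$ with those of $Z/S$, one gets $(Z/S)(\Kt)^+=p(Z(\Kt)^+)\subset p(H\cap Z(\Kt))$, so $p(H\cap Z(\Kt))$ is global.

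For (2), the left isomorphism is induced by $p$, whose kernel is $S(\Kt)$ and which is onto; here $H\cap Z(\Kt)\supset D(Z(\Kt))$ and $S$ is central in $Z$, so all subgroups in sight are normal and the quotients are honest groups. For the surjection I would first prove the structural identity $G(\Kt)=Z(\Kt)\,G(\Kt)^+$: picking opposite parabolics $P=ZU$, $P^-=ZU^-$ with Levi $Z$, the big cell $U^-ZU$ is open and dense with $U(\Kt_\mm),U^-(\Kt_\mm)\subset G(\Kt_\mm)^+$, so its points lie in the subgroup $G(\Kt_\mm)^+Z(\Kt_\mm)$ (a subgroup since $G(\Kt_\mm)^+$ is normal, by \cite[6.1.]{HomAbstraits}); as $\Kt_\mm$ is infinite and $G$ unirational, every element of $G(\Kt_\mm)$ is a product of two points of the big cell, and taking products over $\mm$ gives $G(\Kt)=Z(\Kt)G(\Kt)^+$. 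Since $G(\Kt)^+\subset H$, the composite $Z(\Kt)\hookrightarrow G(\Kt)\to c_H(G)$ is then surjective, and it remains to check $S(\Kt)(H\cap Z(\Kt))\subset G(K)H$, the factor $H\cap Z(\Kt)\subset H$ being clear.

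The hard part is the inclusion $S(\Kt)\subset G(K)H$: since $S$ is only a $\Kt$-torus, weak approximation cannot be applied to it directly. Here I would use that $c_H(G)$ is abelian (Proposition \ref{GroupeDeClasses}) together with Proposition \ref{ToresRG}, which (as $S$ is split, hence $R$-trivial) provides $g\in G(\Kt)$ with $gS(\Kt)g^{-1}\subset\overline{RG(K)}\subset\overline{G(K)}$; since $H$ is open, Lemma \ref{Adherence} gives $\overline{G(K)}=\overline{G(K)}H=G(K)H$, so $gsg^{-1}\in G(K)H$ for every $s\in S(\Kt)$, and abelianness of the quotient forces the class of $s$ to equal that of $gsg^{-1}$, namely the trivial one. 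This yields the factorization and hence the surjection of (2). Finally, for (3), with $S$ defined over $K$ the Levi $Z$ descends to $K$ and $p$ is also surjective on $K$-points ($H^1(K,S)=0$); applying the approximation results inside the reductive $K$-group $Z$, Proposition \ref{ToresRG} gives $S(\Kt)\subset\overline{Z(K)}$, and openness of $H\cap Z(\Kt)$ with Lemma \ref{Adherence} yields $S(\Kt)\subset Z(K)(H\cap Z(\Kt))$, which is exactly the identity $c_{H\cap Z(\Kt)}(Z)=Z(\Kt)/((H\cap Z(\Kt))Z(K)S(\Kt))$. The left isomorphism is again induced by $p$, now using $p(Z(K))=(Z/S)(K)$, and the surjection onto $c_H(G)$ is the one from (2), the inclusion $Z(K)(H\cap Z(\Kt))\subset G(K)H$ being immediate.
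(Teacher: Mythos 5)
Your proof is correct and follows essentially the same route as the paper's: Hilbert 90 plus the third isomorphism theorem for the two isomorphisms, and the combination of $G(\Kt)=G(\Kt)^+\,Z(\Kt)$, Proposition \ref{ToresRG} and Lemma \ref{Adherence} for the surjections onto $c_H(G)$. The only differences are that you supply details the paper leaves implicit — the big-cell proof of $G(\Kt)=G(\Kt)^+\,Z(\Kt)$, and the use of normality (or abelianness) of $G(\Kt)/G(K)\,H$ to eliminate the conjugating element $g$ furnished by Proposition \ref{ToresRG} before concluding $S(\Kt)\subset G(K)\,H$ — and in (1) you establish globality of $H\cap Z(\Kt)$ via $Z(\Kt)^+\subset G(\Kt)^+\subset H$, where the paper instead asserts that $Z(\Kt)^+$ is trivial (which presupposes $Z$ minimal); your variant is the one that covers the non-minimal Levi allowed by the statement.
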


\begin{proof}
${}$
    \begin{enumerate}
        \item Comme $Z(\Kt)$ et $(Z/S)(\Kt)$ n'ont pas de sous-groupes de racines (car n'ont pas de cocaractères non centraux), on a $Z(\Kt)^+$ et $(Z/S)(\Kt)^+$ qui sont triviaux. Par conséquent, un sous-groupe de $Z(\Kt)$ (ou $(Z/S)(\Kt)$) est global si et seulement s'il est ouvert.
         
        Le sous-groupe $H\cap Z(\Kt)$ est bien sûr ouvert dans $Z(\Kt)$ puisque ce dernier est muni de la topologie induite par celle de $G(\Kt)$ et $H$ est ouvert dans $G(\Kt)$.

        Par ailleurs, $p$ est ouvert d'après \cite[3.1.2 Lemme]{GGMB} puisque $Z\rightarrow Z/S$ est lisse (car son noyau $S$ est lisse). Par conséquent, $p(H\cap Z(\Kt))$ est ouvert dans $(Z/S)(\Kt)$.

        \item Le théorème 90 de Hilbert montre que $(Z/S)(\Kt)=Z(\Kt)/S(\Kt)$. L'isomorphisme est donc une conséquence du troisième théorème d'isomorphisme (\cite[\S4.6.Théorème 4.b)]{BourbakiAlgebre1}).

        Par ailleurs, d'après la proposition \ref{ToresRG}, on a $S(\Kt)\subset \overline{G(K)}\,H=G(K)\,H$. Donc $(H\cap Z(\Kt)\,S(\Kt)\subset G(K)\,H$. La surjectivité $Z(\Kt)\twoheadrightarrow c_H(G)$ vient du fait que $G(\Kt)=G(\Kt)^+\,Z(\Kt)$ et que $G(\Kt)^+\subset H$. Il suffit donc de quotienter par \linebreak $(H\cap Z(\Kt))\,S(\Kt)$ pour obtenir la flèche surjective voulue.

        \item Enfin, le théorème 90 de Hilbert et le troisième théorème d'isomorphisme donne encore l'isomorphisme. Il suffit ensuite de voir que $$S(\Kt)\subset (H\cap Z(\Kt))\,\overline{Z(K)} = (H\cap Z(\Kt))\,Z(K)$$
        d'après le lemme \ref{Adherence} et la proposition \ref{ToresRG} pour obtenir l'égalité \linebreak $Z(\Kt)/((H\cap Z(\Kt))\,Z(K)\,S(\Kt)) = c_{H\cap Z(\Kt)}(Z)$. Enfin, la flèche surjective se construit comme précédemment en observant que $(H\cap Z(\Kt))\,Z(K)\subset H\,G(K)$.
    \end{enumerate}
\end{proof}

Faisons ensuite le pont entre la cohomologie galoisienne et la cohomologie étale par le lemme simple suivant :

\begin{lem}\label{GaloisToEtale}
    Soit $\Gc$ un schéma en groupes lisse sur $\Rt$. Notons $G:=\Gc_{\Kt}$. Rappelons que $\Galnr:= \prod_{\mm\in \Specm(R)}\Galnr_{\mm}$ agit naturellement sur $G(\Kt)=\prod_{\mm\in \Specm(R)}G(\Kt_{\mm})$. On a :
    $$\Ker\left ( H^1_{\textrm{\upshape ét}}(\Rt,\Gc)\rightarrow H^1_{\textrm{\upshape ét}}(\Kt,G) \right ) = \Ker \left ( H^1(\Galnr,\Gc(\Rtnr)) \rightarrow H^1(\Galnr,G(\Ktnr)) \right).$$
\end{lem}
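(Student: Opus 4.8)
Le plan est de se ramener à un seul idéal maximal, d'y identifier la cohomologie étale de la base hensélienne locale à la cohomologie galoisienne non ramifiée des \emph{points entiers}, puis de recoller ; la seule vraie subtilité est que la décomposition naïve du $H^1$ d'un produit est fausse, mais que celle du \emph{noyau} en cause survit.

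On commence par la réduction au cas local. Comme $R$ est semi-local, $\Specm(R)$ est fini, de sorte que $\Spec(\Rt)=\coprod_{\mm}\Spec(\Rt_\mm)$ est une réunion disjointe finie et $H^1_{\textrm{\upshape ét}}(\Rt,\Gc)=\prod_\mm H^1_{\textrm{\upshape ét}}(\Rt_\mm,\Gc)$, de même sur $\Kt$. Le noyau de gauche vaut donc $\prod_\mm \Ker\!\left(H^1_{\textrm{\upshape ét}}(\Rt_\mm,\Gc)\to H^1_{\textrm{\upshape ét}}(\Kt_\mm,G)\right)$, puisque le noyau d'un produit d'applications d'ensembles pointés est le produit des noyaux. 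Côté galoisien, un $1$-cocycle continu à valeurs dans le produit fini $\prod_\mm \Gc(\Rtnr_\mm)$ se décompose composante par composante, de même pour les cobords, d'où $H^1(\Galnr,\Gc(\Rtnr))=\prod_\mm H^1(\Galnr,\Gc(\Rtnr_\mm))$ et un noyau de droite égal à $\prod_\mm \Ker\!\left(H^1(\Galnr,\Gc(\Rtnr_\mm))\to H^1(\Galnr,G(\Ktnr_\mm))\right)$. Il suffit donc de traiter chaque facteur $\mm$.

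On fixe $\mm$. L'anneau $\Rt_\mm$ est un anneau de valuation discrète hensélien, d'hensélisé strict $\Rtnr_\mm$ (de corps résiduel $\kappa_\mm^s$ séparablement clos), et $\Galnr_\mm=\Gal(\Ktnr_\mm/\Kt_\mm)\cong\Gal(\kappa_\mm^s/\kappa_\mm)$. Comme $\Gc$ est lisse, tout $\Gc$-torseur sur l'anneau local strictement hensélien $\Rtnr_\mm$ est trivial (un morphisme lisse et surjectif vers un anneau local strictement hensélien admet une section), donc $H^1_{\textrm{\upshape ét}}(\Rtnr_\mm,\Gc)$ est trivial d'après le lemme \ref{Torseurs}. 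Le revêtement pro-étale galoisien $\Spec(\Rtnr_\mm)\to\Spec(\Rt_\mm)$, de groupe $\Galnr_\mm$, fournit alors par descente une bijection canonique $H^1_{\textrm{\upshape ét}}(\Rt_\mm,\Gc)\cong H^1(\Galnr_\mm,\Gc(\Rtnr_\mm))$ (tout torseur, étant de présentation finie, est trivialisé sur une couche non ramifiée finie, d'où un cocycle continu). De même $H^1_{\textrm{\upshape ét}}(\Kt_\mm,G)=H^1(\Gal(\Kt_\mm^s/\Kt_\mm),G(\Kt_\mm^s))$. Sous ces identifications, le morphisme de restriction à la fibre générique devient le composé
\[
H^1(\Galnr_\mm,\Gc(\Rtnr_\mm))\xrightarrow{\ \alpha_\mm\ }H^1(\Galnr_\mm,G(\Ktnr_\mm))\xrightarrow{\ \mathrm{inf}\ }H^1(\Gal(\Kt_\mm^s/\Kt_\mm),G(\Kt_\mm^s)),
\]
où $\alpha_\mm$ est induit par l'inclusion $\beta\colon\Gc(\Rtnr_\mm)\hookrightarrow G(\Ktnr_\mm)$ (injective car $\Gc$ est séparé, cf. les conventions) et $\mathrm{inf}$ est l'inflation le long de $\Gal(\Kt_\mm^s/\Kt_\mm)\twoheadrightarrow\Galnr_\mm$. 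Comme $G(\Ktnr_\mm)=G(\Kt_\mm^s)^{\Gal(\Kt_\mm^s/\Ktnr_\mm)}$ par descente galoisienne, la suite exacte d'inflation–restriction non abélienne montre que $\mathrm{inf}$ est injective, d'où $\Ker(\mathrm{inf}\circ\alpha_\mm)=\Ker(\alpha_\mm)$, c'est-à-dire $\Ker\!\left(H^1_{\textrm{\upshape ét}}(\Rt_\mm,\Gc)\to H^1_{\textrm{\upshape ét}}(\Kt_\mm,G)\right)=\Ker(\alpha_\mm)$.

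Il reste à identifier $\Ker\!\left(H^1(\Galnr,\Gc(\Rtnr_\mm))\to H^1(\Galnr,G(\Ktnr_\mm))\right)$ avec $\Ker(\alpha_\mm)$, et c'est ici qu'intervient l'injectivité de $\beta$. L'inflation $H^1(\Galnr_\mm,-)\to H^1(\Galnr,-)$ est injective (le facteur complémentaire $\Galnr':=\prod_{\mm'\neq\mm}\Galnr_{\mm'}$ agit trivialement) et envoie $\Ker(\alpha_\mm)$ dans le noyau de gauche. Réciproquement, on prendrait $[c]$ dans ce noyau, représenté par un cocycle $c$ à valeurs dans $\Gc(\Rtnr_\mm)$ tel que $\beta\circ c$ soit un cobord $\gamma\mapsto m^{-1}\gamma(m)$ pour un $m\in G(\Ktnr_\mm)$ ; pour $\gamma\in\Galnr'$, l'action sur $G(\Ktnr_\mm)$ étant triviale, on obtient $\beta(c(\gamma))=1$, donc $c(\gamma)=1$ par injectivité de $\beta$, et la relation de cocycle force alors $c$ à provenir par inflation d'un cocycle sur $\Galnr_\mm$ appartenant à $\Ker(\alpha_\mm)$. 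Les deux noyaux coïncident, et le rassemblement des facteurs donne le lemme. L'obstacle principal est l'identification locale du troisième paragraphe (trivialité des torseurs lisses sur un local strictement hensélien, plus l'argument de descente pro-étale donnant comme coefficient $\Gc(\Rtnr_\mm)$ et non les points de la fibre spéciale) ; la décomposition en produit du noyau n'est quant à elle sauvée que parce que $\beta$ est injective, ce qui annule la contribution a priori gênante de $\Galnr'$ qui empêcherait une décomposition naïve du $H^1$ d'un produit.
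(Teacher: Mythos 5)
Votre preuve est correcte et suit essentiellement la même démarche que celle de l'article : réduction facteur par facteur, identification de $H^1_{\textrm{\upshape ét}}(\Rt_\mm,\Gc)$ avec $H^1(\Galnr_\mm,\Gc(\Rtnr_\mm))$ via la trivialité des torseurs sous un groupe lisse sur l'hensélisé strict, puis injectivité de l'inflation $H^1(\Galnr_\mm,G(\Ktnr_\mm))\rightarrow H^1(\Gal(\Kt_\mm^s/\Kt_\mm),G(\Kt_\mm^s))$ par la suite inflation--restriction. Vous justifiez de surcroît explicitement la réduction facteur par facteur du côté galoisien (en utilisant l'injectivité de $\Gc(\Rtnr_\mm)\hookrightarrow G(\Ktnr_\mm)$ pour neutraliser la contribution du facteur complémentaire de $\Galnr_\mm$ dans $\Galnr$), point que l'article traite comme évident.
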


\begin{proof}
    De toute évidence, l'égalité peut se montrer facteur par facteur. Autrement dit, on peut se ramener au cas où $\Rt$ est un anneau de valuation discrète hensélien.
    Observons que  $H^1(\Gamma,G(\Kt^s))=H^1_{\text{ét}}(\Kt,G)$ d'après \cite[VIII, Corollaire 2.3.]{SGA4} (où $\Gamma$ désigne le groupe de Galois absolu de $\Kt$). Par ailleurs, $H^1(\Galnr,\Gc(\Rtnr))$ vaut $H^1_{\text{ét}}(\Rt,\Gc)$. Cela est une conséquence de \cite[2.9.2.(2)]{GilleSemisimpleSchémas} et du fait que $H^1_{\text{ét}}(\Rtnr,\Gc)=1$ puisque $H^1_{\text{ét}}(\Rtnr,\Gc)\cong H^1_{\text{ét}}(\kappa^s,\Gc)$ d'après \sga{XXIV, Proposition 8.1.}.

    Observons ensuite que l'application naturelle $H^1(\Galnr,\Gc(\Rtnr)) \rightarrow H^1(\Gamma,G(\Kt^s))$ se factorise par $H^1(\Galnr,G(\Ktnr))$. D'après la suite exacte inflation-restriction (\cite[I.\S5.8.a)]{CohoGalois}), on a l'injection $H^1(\Galnr,G(\Ktnr))\rightarrow H^1(\Gamma,G(\Kt^\mathrm{s}))$. Ceci permet de conclure.
\end{proof}

Dans la preuve précédente, on a également montré que $H^1(\Galnr,\Gc(\Rtnr))=H^1_{\text{ét}}(\Rt,\Gc)$. En fait, tout $\Rt$-torseur sur $\Gc$ provient d'un unique cocycle dans $Z^1(\Galnr,\Gc(\Rtnr))$ (cf. \cite[Lemme 2.2.1.]{GilleSemisimpleSchémas} et \cite[2.9. Calculs galoisiens.]{GilleSemisimpleSchémas}). Comme dans la fin de la section \cite[4.]{Article1}, on définit alors le tordu ${}^z\Gc$ de $\Gc$ par un cocycle $z\in Z^1(\Galnr,\Gc(\Rtnr))$ comme étant le tordu au travers du torseur que $z$ définit.
\medskip

On peut enfin en déduire le théorème suivant :

\begin{thm}\label{ThmStabilisateur}
    Soit $G$ un groupe réductif sur $K$ tel que $G(\Ktnr)$ est conforme. L'application suivante est injective :
    $$H^1_{\normalfont \text{ét}}(R,\Gc) \rightarrow H^1_{\normalfont \text{ét}}(K,G)$$

    \noindent où $\Gc$ est un $R$-schéma en groupes stabilisateur d'une facette de $G$.
\end{thm}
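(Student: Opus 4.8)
Le plan est d'appliquer le découpage du théorème~\ref{DecoupagePb} avec $\Kt=\Kh$ (donc $\Rt=\Rh$), puis d'exploiter les techniques d'approximation de la section précédente. Un schéma en groupes stabilisateur d'une facette étant lisse et séparé, on peut travailler en cohomologie étale et utiliser les recollements. D'après le lemme de torsion~\ref{TwistEtFibresKer}, l'injectivité annoncée équivaut à la trivialité du noyau de $H^1_{\text{ét}}(R,\Gc')\rightarrow H^1_{\text{ét}}(K,G')$ pour tout tordu intérieur $\Gc'=\Gc^\Xf$ (avec $[\Xf]\in H^1_{\text{ét}}(R,\Gc)$ et $G':=\Gc'_{K}$). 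Il faut d'abord établir le point~(1) de la stratégie : un tel $\Gc'$ demeure un schéma en groupes stabilisateur d'une facette, de fibre générique un tordu intérieur $G'$ de $G$, et la conformité se transmet à $G'(\Ktnr)$. En effet, la torsion intérieure agit sur l'immeuble par des automorphismes préservant les types des facettes, or la conformité n'est qu'une condition de préservation des types, donc invariante par torsion intérieure.

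On fixe un tel $\Gc'$ et on applique la proposition~\ref{CritereEgaliteKer} : la trivialité du noyau sur $R$ résultera de la conjonction de (a)~la trivialité du noyau de $H^1_{\text{ét}}(\Rt,\Gc')\rightarrow H^1_{\text{ét}}(\Kt,G')$ (le cas complet) et de (b)~la trivialité du double quotient $\Gc'(\Rt)\backslash G'(\Kt)/G'(K)$. Pour~(a), le lemme~\ref{GaloisToEtale} ramène l'énoncé à la trivialité du noyau de $H^1(\Galnr,\Gc'(\Rtnr))\rightarrow H^1(\Galnr,G'(\Ktnr))$, c'est-à-dire au cas complet conforme déjà traité dans~\cite{Article1} ; l'hypothèse de conformité y est indispensable, comme le montrent les contre-exemples de type~${}^2A_3$ évoqués dans l'introduction.

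Le cœur de la preuve est le point~(b). Le schéma stabilisateur $\Gc'$ fournit, via~\cite[Lemme 2.8.]{Article1}, un sous-groupe global conforme $H$ de $G'(\Kt)$ ainsi qu'un couple $(\Ac,\Cc)$ tels que $H_{(\Ac,\Cc)}\subset \Gc'(\Rt)$. Le lemme~\ref{DoubleQuotientSimplifie} donne alors $H\subset G'(\Kt)^+\,\Gc'(\Rt)$ et l'égalité $G'(K)\,g\,\Gc'(\Rt)=\overline{G'(K)}\,g\,\Gc'(\Rt)$ pour tout $g\in G'(\Kt)$. Le théorème d'approximation~\ref{AdherenceRG} fournit l'inclusion décisive $G'(\Kt)^+\subset\overline{G'(K)}$, de sorte que $G'(K)\,\Gc'(\Rt)$ absorbe $G'(\Kt)^+$. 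On identifie ensuite le double quotient au quotient abélien $c'(\Gc')$ grâce à la proposition~\ref{GroupeDeClasses}, puis on ramène son étude, via le lemme~\ref{MajorationDoubleQuotient}, à un sous-groupe de Levi $Z=Z_{G'}(S)$ et à son quotient $Z/S$ par un tore déployé maximal $S$ ; la trivialité résiduelle se lit alors au niveau torique grâce à la proposition~\ref{ToresRG}. On conclut que le double quotient est trivial.

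L'obstacle principal est le point~(b) lorsque $G$ est $K$-anisotrope tandis qu'il devient $\Kt_\mm$-isotrope en certaines places : le groupe $G(K)$ est alors trop petit pour qu'une approximation faible élémentaire atteigne $G(\Kt)^+$. C'est précisément là qu'intervient l'innovation du théorème~\ref{AdherenceRG}, fondé sur le théorème de Prasad (cf. la proposition~\ref{Prasad}), qui garantit $G(\Kt)^+\subset\overline{RG(K)}\subset\overline{G(K)}$ même dans ce cas. La trivialité du double quotient en découle dans toutes les situations, ce qui, joint au cas complet, achève la démonstration de l'injectivité.
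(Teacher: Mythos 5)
Votre squelette (découpage via le théorème~\ref{DecoupagePb}, stabilité par torsion intérieure, cas hensélien via le lemme~\ref{GaloisToEtale} et \cite{Article1}, approximation via le théorème~\ref{AdherenceRG}) coïncide avec celui du papier, mais l'étape décisive --- la trivialité du double quotient --- présente une lacune réelle. Vous introduisez \og un sous-groupe global conforme $H$ de $G'(\Kt)$ \fg{} avec $H_{(\Ac,\Cc)}\subset\Gc'(\Rt)$ et en déduisez $H\subset G'(K)\,\Gc'(\Rt)$ ; mais cela ne donne la trivialité de $\Gc'(\Rt)\backslash G'(\Kt)/G'(K)$ que si $H=G'(\Kt)$ tout entier, ce que vous n'établissez jamais. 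C'est précisément ici que l'hypothèse de conformité de $G(\Ktnr)$ doit servir : elle entraîne que $G(\Kt)$ lui-même est conforme, donc que le stabilisateur $G(\Kt)_{\Cc}$ d'une chambre $\Cc$ contenant la facette fixe $\Cc$ (préservation des types) et donc la facette, d'où $G(\Kt)_{\Cc}\subset\Gc(\Rt)$ ; on applique alors le lemme~\ref{DoubleQuotientSimplifie} avec $H=G(\Kt)$ et on obtient directement $G(\Kt)=G(K)\,\Gc(\Rt)$. Dans votre rédaction, la conformité n'est invoquée que pour la stabilité par torsion, pas là où elle est indispensable.

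Votre tentative de combler ce manque par le détour $c'(\Gc')\twoheadleftarrow c_{H\cap Z(\Kt)}(Z)$ via la proposition~\ref{GroupeDeClasses} et le lemme~\ref{MajorationDoubleQuotient}, conclue par la proposition~\ref{ToresRG}, ne fonctionne pas pour un $G$ réductif général : la proposition~\ref{ToresRG} ne donne que $RT(\Kt)\subset\overline{RG(K)}$ (à conjugaison près), ce qui n'entraîne $T(\Kt)\subset\overline{G(K)}$ que si le tore est $R$-trivial (cas induit, par exemple adjoint quasi-déployé) ; le quotient torique résiduel n'a aucune raison d'être trivial en général. Ce mécanisme est utilisé dans l'article pour d'autres théorèmes (cas réductif sur $R$, où $\mathcal{Z}(\Rt)=Z(\Kt)^1$ fournit l'inclusion manquante, et cas adjoint quasi-déployé), mais pas pour celui-ci. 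Signalons enfin que votre justification de la transmission de la conformité au tordu (\og la torsion intérieure préserve les types \fg) est trop rapide : l'argument du papier est que $\Xf$ se trivialise sur $\Rtnr$, de sorte que $G^{\Xf_K}_{\Ktnr}\cong G_{\Ktnr}$ et que $(G^{\Xf_K})(\Ktnr)\cong G(\Ktnr)$ agit sur le même immeuble, la stabilité de la classe des stabilisateurs résultant alors de \cite[Proposition 4.14.(3)]{Article1}.
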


\begin{proof}
    D'après le théorème \ref{DecoupagePb}, il suffit de montrer que la classe des groupes de Bruhat-Tits étudiée est stable par torsion intérieure, que le double quotient est trivial pour tout élément de cette classe, et que la trivialité du noyau de l'application est réalisée sur $\Rt$.
    \medskip

    Regardons la stabilité. Pour $\Xf$, un $\Gc$-torseur sur $R$, le tordu $\Gc^\Xf$ est un schéma en groupes de fibre générique $G^{\Xf_K}$. Ce dernier groupe est d'ailleurs isomorphe sur $\Kt^{\mathrm{n.r.}}$ à $G_{\Kt^{\mathrm{n.r.}}}$. En particulier, $(G^{\Xf_K})(\Ktnr)\cong G(\Ktnr)$ est conforme. Considérons maintenant $\Xf_{\Rt}$. Il provient d'un unique cocycle $z\in Z^1(\Galnr,\Gc(\Rtnr))$. D'après \cite[Proposition 4.14.(3)]{Article1}, le tordu ${}^z \Gc_{\Rt}$ est un schéma en groupes stabilisateur d'une facette de $(\Gc^\Xf)_{\Kt}$. Par définition, $\Gc$ l'est donc. Ceci conclut la stabilité.
    \medskip
    
    Le double quotient est trivial. En effet, prenons $\Cc$, une chambre de $\ImmBT(G_{\Kt})$ contenant la facette de l'énoncé du théorème. Comme $G(\Ktnr)$ est conforme, $G(\Kt)$ est donc $\Kt$-conforme. Par conséquent, $G(\Kt)_{\Cc}$ fixe $\Cc$ et donc la facette. Donc $G(\Kt)_{\Cc} \subset \Gc(\Rt)$. D'après le lemme \ref{DoubleQuotientSimplifie}, $G(\Kt)\subset G(K)\Gc(\Rt)$.
    \medskip

    Enfin, montrons que la trivialité du noyau est réalisée sur $\Rt$. D'après le lemme \ref{GaloisToEtale}, l'énoncé sous forme de cohomologie étale est équivalent à un énoncé sous forme de cohomologie galoisienne sur $\Galnr$. Le résultat découle alors de \cite[Corollaire 4.7.]{Article1} puisque $G(\Ktnr)$ est conforme.
\end{proof}

Du théorème \ref{ThmStabilisateur}, on en déduit :

\begin{coro}\label{ThmInjSSSC}
    Soit $G$ un groupe semi-simple simplement connexe quasi-déployé sur $\Ktnr$ (c'est-à-dire, tel que, pour tout $\mm\in \Spec(R)$, $G_{\Kt^{\mathrm{n.r.}}_{\mm}}$ est quasi-déployé). L'application suivante est injective :
    $$H^1_{\normalfont \text{ét}}(R,\Gc) \rightarrow H^1_{\normalfont \text{ét}}(K,G)$$

    \noindent où $\Gc$ est un schéma en groupes stabilisateur d'une facette de $G$ (ou encore parahorique de $G$, les deux coïncident).
\end{coro}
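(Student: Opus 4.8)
The plan is to obtain this statement as a direct consequence of Theorem~\ref{ThmStabilisateur}, whose single hypothesis is that $G(\Ktnr)$ be conforme. Everything therefore reduces to two points: first, that for a simply connected semisimple group the $R$-schémas en groupes stabilisateur d'une facette coincide with the parahoric ones (so that the two formulations of the statement are genuinely the same assertion), and second, that the hypothesis ``$G(\Ktnr)$ conforme'' holds automatically once $G$ is simply connected and quasi-déployé sur $\Ktnr$. Granting these two facts, Theorem~\ref{ThmStabilisateur} applies with no modification and yields the injectivity of $H^1_{\textrm{\upshape ét}}(R,\Gc)\to H^1_{\textrm{\upshape ét}}(K,G)$.

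For the coincidence of the two classes of group schemes I would invoke the connectedness of facet stabilizers for simply connected groups: working factor by factor over the $\Rt_\mm$ (equivalently, after base change to $\Rtnr_\mm$), the scheme-theoretic stabilizer of a facette in $\ImmBT(G_{\Ktnr_\mm})$ is already connected for a simply connected semisimple group, hence equals its neutral component, which is the parahorique. This is part of the Bruhat--Tits dictionary in the residually quasi-split setting, and it is exactly the coincidence already exploited in Theorem~\ref{CasParfaitSSSC}. Consequently the phrases ``$\Gc$ stabilisateur d'une facette'' and ``$\Gc$ parahorique'' describe one and the same $R$-groupe, which justifies the parenthetical remark in the statement.

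The substance of the argument is the verification that $G(\Ktnr)$ is conforme. Since we are taking the full group of points, being global is automatic (it is open in itself and contains $G(\Ktnr)^+$), so conforme reduces to the action on $\ImmBT(G_{\Ktnr})$ being type-preserving. Here the simple connectedness of $G$ is decisive: because $\pi_1(G)=1$, the Kottwitz homomorphism---whose kernel is precisely the subgroup of $G(\Ktnr_\mm)$ acting without permuting the types of the facettes---is identically trivial, so the entire group preserves types. Equivalently, the translation part attached to a maximal torus lands in the coroot lattice when $\pi_1(G)=1$, while $G(\Ktnr_\mm)^+$ is type-preserving because it sits inside the parahoric subgroups; together they exhaust $G(\Ktnr_\mm)$. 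The residual quasi-split hypothesis is what places us in the régime where the building, its type function, and the parahoric group schemes behave as in the classical theory factor by factor over the $\Ktnr_\mm$, and where the Galois action of $\Galnr$ carries type-preservation down to $\Kt$ in the manner used in the proof of Theorem~\ref{ThmStabilisateur}.

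I expect the main obstacle to be precisely this last verification---controlling type-preservation in the non-split but residually quasi-split régime and checking that it descends correctly through $\Galnr$---rather than the appeal to Theorem~\ref{ThmStabilisateur} itself, which is purely formal once its hypothesis is met. Indeed, as soon as $G(\Ktnr)$ is known to be conforme, nothing further is required: the stability of the relevant class of Bruhat--Tits group schemes under inner twisting, the triviality of the double quotient $\Gc(\Rt)\backslash G(\Kt)/G(K)$, and the triviality of the kernel over $\Rt$ are all supplied internally by Theorem~\ref{ThmStabilisateur} (through the découpage of Theorem~\ref{DecoupagePb}).
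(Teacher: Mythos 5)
Your proposal is correct and follows essentially the same route as the paper: reduce to Théorème~\ref{ThmStabilisateur} and verify that $G(\Ktnr)$ is conforme, the coincidence of facet stabilizers and parahorics coming from connectedness of fibres in the simply connected case. The paper simply cites \cite[5.2.10.(i)]{BT2} for the type-preservation and \cite[5.2.9.]{BT2} for the connectedness where you sketch the (equivalent, modern) argument via triviality of the Kottwitz homomorphism when $\pi_1(G)=1$; only be aware that in general the kernel of the Kottwitz homomorphism is not \emph{exactly} the type-preserving subgroup, though this does not affect the simply connected case.
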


\begin{proof}
    D'après le théorème \ref{ThmStabilisateur}, il suffit de voir que $G(\Ktnr)$ est conforme. C'est une conséquence de \cite[5.2.10.(i)]{BT2}. Le résultat \cite[5.2.9.]{BT2} assure également que $\Gc$ est à fibres connexes.
\end{proof}

\pagebreak

Cela permet d'en déduire le théorème \ref{CasParfaitSSSC} :

\begin{proof}[{Démonstration du théorème \ref{CasParfaitSSSC}}]
    D'après le corollaire précédent, il suffit de montrer que tout groupe semi-simple simplement connexe sur un corps valué hensélien à corps résiduel parfait est quasi-déployé sur l'extension maximale non ramifiée. C'est en effet le cas d'après \cite[5.1.1.]{BT2}.
\end{proof}

\begin{rmq}
    Il est raisonnable de se demander si le corollaire s'étend à des groupes semi-simples simplement connexes non quasi-déployés sur $\Ktnr$. Cela est une question délicate qui nécessite une étude séparée qui va être réalisée dans un article ultérieur.
\end{rmq}

Ce résultat donne en particulier le cas semi-simple simplement connexe du théorème de Nisnevich-Guo. Avec un peu plus d'efforts, on peut également prouver le cas réductif :

\begin{thm}
    Soit $G$ un groupe réductif sur $R$. L'application suivante est injective :
    $$H^1_{\normalfont \text{ét}}(R,G) \rightarrow H^1_{\normalfont \text{ét}}(K,G).$$
\end{thm}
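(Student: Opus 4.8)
Le plan est de ramener l'énoncé au théorème \ref{ThmStabilisateur}, appliqué au schéma en groupes $\Gc:=G$ lui-même. Il suffit alors d'en vérifier les deux hypothèses : que $G$ est un schéma en groupes stabilisateur d'une facette de $G_K$, et que $G(\Ktnr)$ est conforme.

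D'abord, je montrerais que $G$, réductif sur $R$, est un schéma en groupes stabilisateur d'une facette de $G_K$ au sens de la définition \ref{defGroupesBT}. En effet, pour tout idéal maximal $\mm$, le schéma $G_{\Rh_\mm}$ est réductif sur l'anneau de valuation discrète complet $\Rh_\mm$, et coïncide donc avec le schéma en groupes de Bruhat-Tits d'un sommet hyperspécial de $\ImmBT(G_{\Kh_\mm})$ (cf. \cite[Lemme 5.1.]{Article1}) ; or un sommet est en particulier une facette. Ainsi $G$ est bien un schéma en groupes stabilisateur d'une facette, et il est automatiquement lisse et séparé puisque les groupes réductifs sont affines et lisses.

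Le point central sera d'établir que $G(\Ktnr)$ est conforme, seule hypothèse non immédiate de \ref{ThmStabilisateur}. Je procéderais facteur par facteur. Fixons $\mm\in \Specm(R)$ ; comme $R\subset \Rt_\mm$, le groupe $G_{\Rtnr_\mm}$ est réductif sur l'hensélisé strict $\Rtnr_\mm$, dont le corps résiduel $\kappa_\mm^s$ est séparablement clos. Or un schéma en groupes réductif sur un anneau local strictement hensélien est déployé : il admet un tore maximal déployé (le groupe fondamental étale de la base étant trivial), ce qui entraîne son caractère déployé (cf. \sga{Exp. XXII}). Par conséquent, $G_{\Ktnr_\mm}$ est déployé, donc a fortiori quasi-déployé, et la conformité de $G(\Ktnr_\mm)$ en résulte via \cite[5.2.10.(i)]{BT2}, exactement comme dans la preuve du corollaire \ref{ThmInjSSSC}. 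Ceci valant pour chaque $\mm$, le produit $G(\Ktnr)=\prod_{\mm\in\Specm(R)}G(\Ktnr_\mm)$ est conforme.

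Les deux hypothèses du théorème \ref{ThmStabilisateur} étant alors vérifiées, on en déduit l'injectivité de $H^1_{\textrm{\upshape ét}}(R,G)\rightarrow H^1_{\textrm{\upshape ét}}(K,G)$. Le principal obstacle attendu est la vérification de la conformité : dans le cas semi-simple simplement connexe du corollaire \ref{ThmInjSSSC} elle découle directement du caractère quasi-déployé, alors que pour un groupe réductif général il faut s'assurer que l'argument de \cite[5.2.10.(i)]{BT2} s'applique encore en présence d'un centre et d'un quotient torique. Le fait que $G$ soit ici déployé, et non seulement quasi-déployé, sur $\Ktnr$ — conséquence immédiate de sa structure réductive sur $R$ — devrait néanmoins rendre cette vérification accessible par les mêmes résultats de Bruhat et Tits.
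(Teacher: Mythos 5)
Votre réduction au théorème \ref{ThmStabilisateur} bute sur un obstacle réel : l'hypothèse de conformité de $G(\Ktnr)$ est en général fausse pour un groupe réductif qui n'est pas semi-simple simplement connexe, même s'il est déployé. Le résultat \cite[5.2.10.(i)]{BT2} que vous invoquez porte précisément sur les groupes simplement connexes ; pour un groupe réductif général, l'action de $G(\Ktnr)$ sur l'immeuble ne préserve pas les types des facettes. L'exemple standard est $G=\mathrm{GL}_n$ (ou $\mathrm{PGL}_n$) sur $R=\ZZ_p$ : l'élément $\mathrm{diag}(p,1,\dots,1)$ permute cycliquement les types de sommets de l'immeuble de type $\widetilde{A}_{n-1}$. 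Le défaut de conformité est mesuré par l'image de $G(\Ktnr)$ dans le groupe des automorphismes du diagramme de Dynkin affine (essentiellement via le morphisme de Kottwitz), image qui est non triviale dès que le groupe fondamental algébrique de $G$ ne l'est pas. Votre dernière phrase identifie d'ailleurs le point sensible (\emph{en présence d'un centre et d'un quotient torique}), mais la réponse est négative : l'argument ne s'étend pas, et le caractère déployé sur $\Ktnr$ n'y change rien. Vos deux premières étapes (un groupe réductif sur $R$ est le modèle d'un sommet hyperspécial, et il est déployé sur $\Rtnr$ donc sur $\Ktnr$) sont en revanche correctes.

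C'est précisément pour contourner cet obstacle que la preuve du papier ne passe pas par le théorème \ref{ThmStabilisateur}, mais revient au découpage du théorème \ref{DecoupagePb} avec un traitement direct du double quotient : on choisit un $R$-tore déployé maximal $\mathcal{S}\subset G$ et le sous-groupe de Levi minimal associé $\mathcal{Z}$, on établit $D(Z(\Kt))\subset Z(\Kt)^1=\mathcal{Z}(\Rt)\subset G(\Rt)$ grâce à \cite[Lemme 5.1.]{Article1} et à Hilbert 90, puis on conclut $G(\Kt)=G(\Kt)^+\,G(\Rt)=G(K)\,G(\Rt)$ par les lemmes \ref{MajorationDoubleQuotient} et \ref{DoubleQuotientSimplifie} — le théorème \ref{AdherenceRG} remplaçant la conformité comme ingrédient clé. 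Le cas de $\Rt$ se ramène ensuite, via le lemme \ref{GaloisToEtale}, à \cite[Proposition 5.4.]{Article1} (énoncé propre aux points hyperspéciaux) et non au \cite[Corollaire 4.7.]{Article1} utilisé dans la preuve du théorème \ref{ThmStabilisateur}. Pour réparer votre approche, il faudrait donc soit démontrer une variante du théorème \ref{ThmStabilisateur} sans hypothèse de conformité pour les sommets hyperspéciaux, soit adopter l'argument direct ci-dessus.
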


\begin{proof}
    Comme précédemment, grâce au théorème \ref{DecoupagePb}, il suffit de montrer que la classe des groupes réductifs sur $R$ est stable par torsion intérieure, que le double quotient est trivial pour tout élément de cette classe, et que la trivialité du noyau de l'application est réalisée sur $\Rt$.
    \medskip

    La stabilité vient du fait qu'être réductif est local pour la topologie étale (cf. \sga{Exp. XIX, Définition 2.7.}).
    \medskip

    Montrons maintenant que le double-quotient est trivial. Comme $G$ est réductif sur $R$, on peut prendre un $R$-tore déployé maximal $\mathcal{S}$ (de fibre générique $S$) et $\mathcal{Z}$ (de fibre générique $Z$) le sous-groupe de Levi minimal associé (et $Z$ est un sous-groupe de Levi minimal de $G_K$ pour $S$). Le quotient $\mathcal{Z}/\mathcal{S}$ est alors un modèle réductif de $Z/S$ et $$(\mathcal{Z}/\mathcal{S})(\Rt)=(\mathcal{Z}/\mathcal{S})(\Kt)=(Z/S)(\Kt)$$ d'après \cite[Lemme 5.1.]{Article1}, de telle sorte que Hilbert $90$ donne que $\mathcal{Z}(\Rt)\,S(\Kt)=Z(\Kt)$. Or, \cite[Lemme 5.1.]{Article1} donne que $\mathcal{Z}(\Rt)=Z(\Kt)^1$. Donc : $$D(Z(\Kt))\subset D(Z)(\Kt) \subset Z(\Kt)^1 =\mathcal{Z}(\Rt) \subset G(\Rt)\subset G(\Kt)^+\,G(\Rt).$$ D'après le lemme \ref{MajorationDoubleQuotient} et la décomposition obtenue, on a donc $G(\Kt)= G(\Kt)^+\,G(\Rt)$. Mais le lemme \ref{DoubleQuotientSimplifie} donne que $G(\Kt)^+\,G(\Rt)\subset G(K) \,G(\Rt)$. D'où finalement $G(\Kt)=G(K)\,G(\Rt)$ comme voulu.
    \medskip

    Enfin, montrons le cas hensélien. D'après \cite[Lemme 5.1.]{Article1}, on peut revenir à l'étude de points hyperspéciaux. D'après le lemme \ref{GaloisToEtale}, comme précédemment, on se ramène à un énoncé sous forme de cohomologie galoisienne sur $\Galnr$. Le résultat est alors une conséquence de \cite[Proposition 5.4.]{Article1}.
\end{proof}

\pagebreak

Le cas particulier où la facette est une chambre donne également un résultat positif :

\begin{thm}\label{ThmChambres}
    Soit $G$ un groupe réductif sur $K$. Soit $\Gc$, un $R$-schéma en groupes stabilisateur d'une chambre de $G$. Considérons l'application suivante :
    $$H^1_{\normalfont \text{ét}}(R,\Gc) \rightarrow H^1_{\normalfont \text{ét}}(K,G).$$
    \begin{enumerate}
        \item L'application est de noyau trivial.
        \item Si de plus $G_{\Kt}$ est résiduellement quasi-déployé, alors l'application est injective.
    \end{enumerate}
\end{thm}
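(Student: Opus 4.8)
La stratégie est d'appliquer le dévissage du théorème \ref{DecoupagePb} (pour l'injectivité, point (2)) et de la proposition \ref{CritereEgaliteKer} (pour la trivialité du noyau, point (1)), qui ramènent tout à trois ingrédients pour la classe des schémas en groupes stabilisateurs d'une chambre : la stabilité par torsion intérieure, la trivialité du double quotient ${\Gc'(\Rt)\backslash \Gc'(\Kt)/\Gc'(K)}$, et un énoncé sur $\Rt$ (trivialité du noyau pour (1), injectivité complète pour (2)). L'observation décisive est que, contrairement au cas d'une facette quelconque traité au théorème \ref{ThmStabilisateur}, la stabilité et le double quotient se traitent de manière uniforme et \emph{sans hypothèse} pour une chambre ; l'hypothèse résiduellement quasi-déployée de (2) ne servira donc qu'à renforcer l'énoncé sur $\Rt$, en passant de la trivialité du noyau à l'injectivité.

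Pour la stabilité, je reprendrais l'argument du théorème \ref{ThmStabilisateur} : si $\Xf_{\Rt}$ provient d'un cocycle $z\in Z^1(\Galnr,\Gc(\Rtnr))$, le tordu ${}^z\Gc_{\Rt}$ reste un stabilisateur de facette d'après \cite[Proposition 4.14.(3)]{Article1}, et comme une chambre est une facette maximale, son tordu est encore une chambre. Pour le double quotient, j'appliquerais le lemme \ref{DoubleQuotientSimplifie} avec $H=G(\Kt)$ tout entier (qui est trivialement global). Comme $\Cc$ est une chambre, le stabilisateur $H_{(\Ac,\Cc)}$ de la paire $(\Ac,\Cc)$ stabilise $\Cc$, donc est contenu dans $\Gc(\Rt)$, qui est précisément le stabilisateur de $\Cc$ — et ce \emph{sans recourir à la conformité}, justement parce que la facette considérée est la chambre elle-même. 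Le lemme \ref{DoubleQuotientSimplifie} fournit alors $G(\Kt)=G(\Kt)^+\,\Gc(\Rt)\subset G(K)\,\Gc(\Rt)$, d'où la trivialité du double quotient ; le même raisonnement s'applique mot pour mot à chaque tordu intérieur $\Gc'$, qui est lui aussi un stabilisateur de chambre d'un groupe réductif.

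Pour le point (1), la proposition \ref{CritereEgaliteKer} (la trivialité du double quotient étant acquise pour les tordus du noyau) met alors en bijection naturelle le noyau de $H^1_{\text{ét}}(R,\Gc)\rightarrow H^1_{\text{ét}}(K,G)$ et celui de $H^1_{\text{ét}}(\Rt,\Gc)\rightarrow H^1_{\text{ét}}(\Kt,G)$ ; il reste à montrer que ce dernier est trivial. Par le lemme \ref{GaloisToEtale}, on se ramène, facteur par facteur, au noyau de $H^1(\Galnr,\Gc(\Rtnr))\rightarrow H^1(\Galnr,G(\Ktnr))$. Un cocycle $z$ de ce noyau s'écrit $z_\sigma=g^{-1}\sigma(g)$ avec $g\in G(\Ktnr)$ ; comme $z_\sigma\in\Gc(\Rtnr)$ stabilise $\Cc$, le translaté $g\Cc$ est stable sous $\Galnr$, et le point crucial est qu'il définit encore une chambre de $\ImmBT(G_{\Kt})$. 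La transitivité de $G(\Kt)$ sur les chambres de son immeuble donne alors $g\Cc=h\Cc$ pour un $h\in G(\Kt)$, c'est-à-dire $g\in G(\Kt)\,\Gc(\Rtnr)$ ; ainsi $z$ est un cobord à valeurs dans $\Gc(\Rtnr)$, donc trivial, ce qui établit (1). Pour le point (2), l'hypothèse résiduellement quasi-déployée entraîne que $G$ est quasi-déployé sur $\Ktnr$, donc que $G(\Ktnr)$ est conforme d'après \cite[5.2.10.(i)]{BT2} (cf. \cite[Définition 3.4.]{Article1}) ; une chambre étant une facette, le théorème \ref{ThmStabilisateur} s'applique directement et fournit l'injectivité, l'énoncé sur $\Rt$ devenant l'injectivité complète via \cite[Corollaire 4.7.]{Article1}.

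Le point délicat est la dernière étape du point (1) : il faut justifier que le translaté $g\Cc$, fixé par $\Galnr$, est bien une chambre de $\ImmBT(G_{\Kt})$, et que $G(\Kt)$ agit transitivement sur ces chambres, \emph{sans disposer de la conformité} (le seul endroit où l'hypothèse de (2) était décisive). C'est exactement ce qui sépare les chambres, facettes maximales, des facettes de dimension inférieure qui produisent les contre-exemples de \cite[5.2.13.]{BT2} ; et c'est là que la descente à la Bruhat-Tits sur un corps $\Ktnr$ éventuellement non quasi-déployé doit être maniée avec soin, car le repliement de l'immeuble sous l'action de $\Galnr$ pourrait a priori abaisser la dimension de la facette obtenue.
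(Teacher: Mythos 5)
Votre dévissage général est bien celui du texte (théorème \ref{DecoupagePb} et proposition \ref{CritereEgaliteKer}, lemme \ref{DoubleQuotientSimplifie} pour le double quotient, lemme \ref{GaloisToEtale} pour passer à la cohomologie galoisienne), et votre traitement du double quotient est correct : pour une chambre, $G(\Kt)_{(\Ac,\Cc)}\subset G(\Kt)_{\Cc}=\Gc(\Rt)$ sans aucune hypothèse de conformité. En revanche, votre argument de stabilité pour le point (1) est un non-sequitur : « une chambre est une facette maximale, donc son tordu est encore une chambre » ignore que la maximalité d'une facette de $\ImmBT(G_{\Kt})$ est une maximalité \emph{parmi les facettes $\Galnr$-invariantes} de $\ImmBT(G_{\Ktnr})$, et que la torsion par $z$ modifie précisément l'action de $\Galnr$. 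Sans l'hypothèse résiduellement quasi-déployée, la facette $\widetilde{\Cc}$ au-dessus de $\Cc$ n'est pas une chambre de $\ImmBT(G_{\Ktnr})$, et rien n'empêche a priori ${}^z\Cc$ de cesser d'être maximale dans l'immeuble tordu. Le texte contourne l'obstacle en inversant l'ordre de vos étapes : il établit \emph{d'abord} la trivialité du noyau sur $\Rt$ via le lemme \ref{GaloisToEtale} et \cite[Corollaire 4.8.]{Article1} --- c'est exactement l'énoncé dont vous esquissez une preuve et dont vous reconnaissez vous-même que l'étape cruciale (le fait que $g\Cc$ descende en une chambre) reste à justifier ---, puis en déduit la stabilité : un tordu par un élément du noyau est trivial sur $\Rt$, donc isomorphe à $\Gc_{\Rt}$ sur $\Rt$, et être stabilisateur d'une chambre se vérifie sur $\Rt$.

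Votre point (2) est quant à lui incorrect. La chaîne « résiduellement quasi-déployé $\Rightarrow$ quasi-déployé sur $\Ktnr$ $\Rightarrow$ $G(\Ktnr)$ conforme » échoue au second cran : \cite[5.2.10.(i)]{BT2} ne donne la conformité que pour les groupes simplement connexes (c'est ainsi qu'il est utilisé au corollaire \ref{ThmInjSSSC}). Si votre réduction au théorème \ref{ThmStabilisateur} était valable, elle fournirait l'injectivité pour \emph{tout} stabilisateur de facette d'un groupe résiduellement quasi-déployé, ce qui contredit le contre-exemple de \cite[5.2.13.]{BT2} et le théorème \ref{ThmStabQSplit} (groupes adjoints quasi-déployés sur $\Kt$, donc résiduellement quasi-déployés, à noyau non trivial pour certains stabilisateurs de facettes). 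Le texte donne pour (2) un argument qui n'utilise la conformité nulle part : le cocycle de torsion est à valeurs dans $G(\Ktnr)_{\widetilde{\Cc}}$, donc laisse $\widetilde{\Cc}$ invariante d'après \cite[Proposition 4.13.(1)]{Article1}, et c'est l'hypothèse résiduellement quasi-déployée qui garantit que $\widetilde{\Cc}$ est une vraie chambre de $\ImmBT(G_{\Ktnr})$, donc reste maximale après torsion, de sorte que ${}^z\Gc_{\Rt}$ est encore un stabilisateur de chambre par \cite[Proposition 4.14.]{Article1}.
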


\begin{proof}
    ${}$
    \begin{enumerate}
        \item D'après la proposition \ref{CritereEgaliteKer}, il faut montrer que tordre $\mathcal{G}$ par un élément d'une classe dans $\Ker \left( H^1_{\normalfont \text{ét}}(R,\Gc) \rightarrow H^1_{\normalfont \text{ét}}(K,G) \right)$ donne toujours un schéma en groupes stabilisateur d'une chambre, puis que la trivialité du double quotient et du noyau sur $\Rt$ est réalisée pour ces groupes.
        \medskip

        La trivialité du noyau sur $\Rt$ se ramène encore à de la cohomologie galoisienne sur $\Galnr$ d'après le lemme \ref{GaloisToEtale}. On conclut alors en utilisant \cite[Corollaire 4.8.]{Article1}.
        \medskip

        De ceci, on en déduit également la stabilité, car être un schéma en groupes stabilisateur d'une chambre se vérifie sur $\Rt$. Or, par trivialité sur noyau sur $\Rt$, tout tordu de $\Gc$ par un élément de $\Ker \left( H^1_{\normalfont \text{ét}}(R,\Gc) \rightarrow H^1_{\normalfont \text{ét}}(K,G) \right)$ est isomorphe sur $\Rt$ à $\Gc_{\Rt}$.
        \medskip
        
        Enfin, le double quotient $\Gc(\Rt)\backslash G(\Kt) / G(K)$ est trivial d'après le lemme \ref{DoubleQuotientSimplifie}. En effet, comme $G(\Kt)_{(\Ac,\Cc)}\subset G(\Kt)_{\Cc}=\Gc(\Rt)$ le point (2) donne $G(\Kt)\subset G(K) \Gc(\Rt)$.
        
        \item Toujours grâce au théorème \ref{DecoupagePb}, il suffit de montrer que la classe des groupes sur $R$ que l'on considère est stable par torsion intérieure, que le double quotient est trivial pour tout élément de cette classe, et que la trivialité du noyau de l'application est réalisée sur $\Rt$.
        \medskip

        Considérons $\Cc$, la $\Kt$-chambre associée à $\Gc$ et la $\Galnr$-chambre $\widetilde{\Cc}$ correspondante. Prenons $\Xf$, un $\Gc$-torseur sur $R$ et un cocycle $z \in Z^1(\Galnr,G(\Ktnr)_{\widetilde{\Cc}})$ correspondant à $\Xf_{\Rt}$. Le point crucial à observer est que tordre par $z$ rend toujours $\Galnr$-invariant $\widetilde{\Cc}$ d'après \cite[Proposition 4.13.(1)]{Article1}. Puisque $G_{\Kt}$ est résiduellement quasi-déployé, $\widetilde{\Cc}$ est une $\Ktnr$-chambre. Cela signifie donc que ${}^z G_{\Kt}$ est également résiduellement quasi-déployé, que ${}^z\Cc$ est une $\Kt$-chambre de $\ImmBT({}^z G_{\Kt})$, et que ${}^z \Gc_{\Rt}$ est un schéma en groupes stabilisateur de ${}^z\Cc$ d'après \cite[Proposition 4.14.]{Article1}. Il en est donc de même pour $\Gc^\Xf$.
        \medskip
    
        La trivialité du double quotient et l'injectivité dans le cas de $\Rt$ se fait alors comme précédemment.
    \end{enumerate}
\end{proof}

Terminons enfin cet article en calculant de manière exacte le noyau :
$$\Ker \left(H^1_{\normalfont \text{ét}}(R,\Gc) \rightarrow H^1_{\normalfont \text{ét}}(K,G) \right)$$

\noindent pour les $K$-groupes $G$ semi-simples adjoints et quasi-déployés sur $\Kt$, et où $\Gc$ est un schéma en groupes stabilisateur d'une facette de $G$ ou un schéma en groupes parahorique de $G$. 
\medskip

\pagebreak

Commençons donc par montrer que le double quotient $\Gc(\Rt)\backslash G(\Kt)/G(K)$ est trivial :

\begin{lem}
    Soit $G$ un groupe semi-simple adjoint sur $K$ et quasi-déployé sur $\Kt$. \linebreak On a $G(\Kt)=\overline{RG(K)}$. En particulier, $G(\Kt)=G(K)\,\Gc(\Rt)$ pour n'importe quel schéma en groupes $\Gc$ localement de présentation finie et séparé sur $R$ tel que $\Gc_K = G$.
\end{lem}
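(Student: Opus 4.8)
The plan is to establish $G(\Kt)=\overline{RG(K)}$ first, the ``en particulier'' then being a short density argument. Throughout I use that $\overline{RG(K)}$ is a \emph{subgroup} of $G(\Kt)$: indeed $RG(K)$ is a normal subgroup of $G(K)$, and the closure of a subgroup of a topological group is again a subgroup. Since $\overline{RG(K)}\subset G(\Kt)$ holds automatically, only the inclusion $G(\Kt)\subset\overline{RG(K)}$ is at stake. Theorem \ref{AdherenceRG} already yields $G(\Kt)^{+}\subset\overline{RG(K)}$, so the task reduces to absorbing a maximal torus into $\overline{RG(K)}$ together with a Bruhat-type decomposition.

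First I would fix $\mm\in\Specm(R)$. As $G_{\Kt_\mm}$ is quasi-déployé, the centraliser $T_\mm:=Z_{G_{\Kt_\mm}}(S_\mm)$ of a maximal split torus $S_\mm$ is a maximal torus, sitting in a Borel $B_\mm=T_\mm U_\mm$ over $\Kt_\mm$. Here the hypothesis that $G$ is \emph{adjoint} is decisive: then $X^{*}(T_\mm)$ is the root lattice, whose basis of simple roots is permuted by the $\ast$-action of the Galois group, so $X^{*}(T_\mm)$ is a module de permutation and $T_\mm$ is a tore quasi-trivial (induit); being rationnel, it is $R$-trivial, whence $RT_\mm(\Kt_\mm)=T_\mm(\Kt_\mm)$. (In the simply connected case $X^{*}(T_\mm)$ would be the réseau des poids and this would fail.) The relative Bruhat decomposition then gives $G(\Kt_\mm)=T_\mm(\Kt_\mm)\,G(\Kt_\mm)^{+}$: the unipotent factors lie in $G(\Kt_\mm)^{+}$ by definition, and representatives of the relative Weyl group may be chosen inside the subgroup engendré by the relative root groups, hence again in $G(\Kt_\mm)^{+}$. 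Taking the product over $\mm$ yields $G(\Kt)=T(\Kt)\,G(\Kt)^{+}$ with $T:=\prod_\mm T_\mm$, an $R$-trivial $\Kt$-tore of $G_{\Kt}$.

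The delicate step — and the one I expect to be the crux — is that $\overline{RG(K)}$ is not known to be distingué, so I cannot simply assert $T(\Kt)\subset\overline{RG(K)}$. Instead I conjugate the whole decomposition. Proposition \ref{ToresRG} provides $g\in G(\Kt)$ with $g\,T(\Kt)\,g^{-1}\subset\overline{RG(K)}$ (using $RT(\Kt)=T(\Kt)$), and $G(\Kt)^{+}$ is distingué dans $G(\Kt)$ (\cite[6.1.]{HomAbstraits}). Conjugating $G(\Kt)=T(\Kt)\,G(\Kt)^{+}$ by $g$ therefore gives
$$G(\Kt)=\bigl(g\,T(\Kt)\,g^{-1}\bigr)\,G(\Kt)^{+}\subset\overline{RG(K)}\cdot\overline{RG(K)}=\overline{RG(K)},$$
both factors lying in the sous-groupe $\overline{RG(K)}$. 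Combined with the automatic reverse inclusion, this proves $G(\Kt)=\overline{RG(K)}$.

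For the last assertion, the equality just obtained gives $\overline{G(K)}\supset\overline{RG(K)}=G(\Kt)$, i.e.\ $G(K)$ est dense dans $G(\Kt)$. Since $\Gc$ is séparé and localement de présentation finie with $\Gc_{K}=G$ réductif, the group $\Gc(\Rt)=\prod_\mm\Gc(\Rt_\mm)$ is an open subgroup of $G(\Kt)=\prod_\mm G(\Kt_\mm)$ (each factor is ouvert, exactly as in the proof of Lemme \ref{DoubleQuotientSimplifie}, cf.\ \cite[3.5.1 Lemme]{GMBAdeles}). Applying Lemme \ref{Adherence}(2) with $E=G(K)$ and $U=\Gc(\Rt)$ then yields $G(K)\,\Gc(\Rt)=\overline{G(K)}\,\Gc(\Rt)=G(\Kt)\,\Gc(\Rt)=G(\Kt)$, as desired.
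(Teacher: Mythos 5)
Your proof is correct and follows essentially the same route as the paper's: both combine $G(\Kt)^{+}\subset\overline{RG(K)}$ from Theorem \ref{AdherenceRG} with Proposition \ref{ToresRG} applied to the induced (hence $R$-trivial) torus $T=Z_{G_{\Kt}}(S)$, then use the decomposition $G(\Kt)=G(\Kt)^{+}\,T(\Kt)$ and the normality of $G(\Kt)^{+}$ to absorb the conjugated torus, and finish the \emph{en particulier} via the openness of $\Gc(\Rt)$ and Lemma \ref{Adherence}. The only cosmetic differences are that you rederive the decomposition from the relative Bruhat decomposition and justify directly that $T$ is induced, where the paper cites Borel--Tits and Bruhat--Tits respectively.
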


\begin{proof}
    D'après le théorème \ref{AdherenceRG} et la proposition \ref{ToresRG}, on a d'une part \linebreak $G(\Kt)^+ \subset \overline{RG(K)}$, et d'autre part l'existence d'un $g\in G(\Kt)$ tel que $g\,T(\Kt)\,g^{-1}\subset \overline{RG(K)}$, où $T$ est le centralisateur d'un $\Kt$-tore déployé maximal de $G_{\Kt}$. En effet, puisque $G$ est adjoint et quasi-déployé sur $\Kt$, le centralisateur $T$ est un tore induit, et donc $R$-trivial (cf. \cite[4.4.16. Proposition.]{BT2}). On conclut alors que $$G(\Kt) = g\,G(\Kt)^+\,T(\Kt)\,g^{-1} = G(\Kt)^+\,g\,T(\Kt)\,g^{-1}\subset \overline{RG(K)}$$ grâce à \cite[6.11.(i) Proposition.]{HomAbstraits}.

    Utilisons ensuite le point (2) de \ref{DoubleQuotientSimplifie} pour en déduire $G(\Kt)=\overline{G(K)}\,\Gc(\Rt)=G(K)\,\Gc(\Rt)$.
\end{proof}

Ceci nous permet donc de se ramener immédiatement au cas hensélien, et donc à de la cohomologie galoisienne :

\begin{coro}\label{ReductionCasHenselien}
    Reprenons les notations du lemme précédent et supposons de plus que $\Gc$ est lisse. On a alors :
    \begin{align*}
        \Ker \left(H^1_{\normalfont \text{ét}}(R,\Gc) \rightarrow H^1_{\normalfont \text{ét}}(K,G) \right)&=\Ker \left(H^1_{\normalfont \text{ét}}(\Rt,\Gc) \rightarrow H^1_{\normalfont \text{ét}}(\Kt,G) \right)\\
        &= \Ker \left(H^1(\Galnr,\Gc(\Rtnr))\rightarrow H^1(\Galnr,G(\Ktnr))\right).
    \end{align*}
\end{coro}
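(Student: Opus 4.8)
On doit établir deux égalités : la première reliant les noyaux sur $R$ et sur $\Rt$, la seconde reliant la cohomologie étale sur $\Rt$ à la cohomologie galoisienne sur $\Galnr$. Cette dernière n'est autre que le contenu du lemme \ref{GaloisToEtale}, applicable à $\Gc_{\Rt}$ puisque $\Gc$ est supposé lisse ; elle est donc acquise sans travail supplémentaire.

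Pour la première égalité, le plan est d'appliquer la proposition \ref{CritereEgaliteKer} avec $*=\text{ét}$ (ce qui est licite, $\Gc$ étant lisse), l'identification des noyaux étant alors réalisée par la flèche de changement de base $H^1_{\text{ét}}(R,\Gc)\rightarrow H^1_{\text{ét}}(\Rt,\Gc)$. Cette proposition ramène l'égalité voulue à la trivialité du double quotient ${\Gc'(\Rt)\backslash \Gc'(\Kt)/\Gc'(K)}$ pour tout $\Gc'=\Gc^\Xf$ de la classe $\mathfrak{C}$, c'est-à-dire pour $[\Xf]$ parcourant $\Ker\left(H^1_{\text{ét}}(R,\Gc)\rightarrow H^1_{\text{ét}}(K,G)\right)$ ; chacun de ces tordus est bien un $R$-schéma en groupes lisse, séparé et localement de présentation finie, comme rappelé avant le lemme \ref{TwistEtFibresKer}.

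Le point crucial, et l'unique véritable obstacle, est de nature conceptuelle : pour $[\Xf]$ dans ce noyau, la classe générique $[\Xf_K]$ est triviale, de sorte que le tordu intérieur $(\Gc^\Xf)_K=G^{\Xf_K}$ est isomorphe à $G$ comme $K$-groupe. La fibre générique de $\Gc^\Xf$ reste donc semi-simple adjointe et quasi-déployée sur $\Kt$, et le lemme précédent s'applique à $\Gc^\Xf$, fournissant $(\Gc^\Xf)(\Kt)=(\Gc^\Xf)(K)\,(\Gc^\Xf)(\Rt)$, soit précisément la trivialité voulue du double quotient. Une fois cette invariance de la fibre générique le long du noyau constatée, la proposition \ref{CritereEgaliteKer} donne la bijection naturelle des noyaux, et la conclusion devient mécanique.
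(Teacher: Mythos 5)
Votre preuve est correcte et suit essentiellement la même démarche que celle de l'article : réduction à la trivialité du double quotient via la proposition \ref{CritereEgaliteKer}, en observant que la fibre générique d'un tordu par un élément du noyau reste isomorphe à $G$ (donc le lemme précédent s'applique), puis le lemme \ref{GaloisToEtale} pour la seconde égalité. Rien à signaler.
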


\begin{proof}
    Utilisons la proposition \ref{CritereEgaliteKer}. Il suffit de montrer que le double quotient associé à un tordu de $\mathcal{G}$ par un élément d'une classe dans $\Ker \left(H^1_{\normalfont \text{ét}}(R,\Gc) \rightarrow H^1_{\normalfont \text{ét}}(K,G) \right)$ est trivial. Cela est en fait évident d'après le lemme précédent car un tel tordu a une fibre générique isomorphe à $G$, par trivialité de l'élément dans $H^1_{\normalfont \text{ét}}(K,G)$. 
    
    La seconde égalité est ensuite une conséquence immédiate du lemme \ref{GaloisToEtale}.
\end{proof}

On peut désormais réutiliser les résultats \cite[Théorème 6.8.]{Article1} et \cite[Théorème 6.15.]{Article1} pour obtenir :

\begin{thm}\label{ThmParahoQSplit}
    Soit $G$ un groupe semi-simple adjoint sur $K$ et quasi-déployé sur $\Kt$. \linebreak On a :
    $$\Ker \left(H^1_{\normalfont \text{ét}}(R,\Gc) \rightarrow H^1_{\normalfont \text{ét}}(K,G)\right)=1$$

    \noindent où $\Gc$ est un schéma en groupes parahorique de $G$.
\end{thm}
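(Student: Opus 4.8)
Le plan est de ramener l'énoncé à la situation hensélienne (cohomologie galoisienne) déjà traitée dans \cite{Article1}, puis d'invoquer les calculs de noyaux correspondants. L'observation clé est qu'un schéma en groupes parahorique est lisse, séparé et localement de présentation finie, de sorte qu'il relève directement du corollaire \ref{ReductionCasHenselien}.

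Je commencerais par vérifier que toutes les hypothèses du corollaire \ref{ReductionCasHenselien} sont satisfaites. Le groupe $G$ est semi-simple adjoint sur $K$ et quasi-déployé sur $\Kt$ par hypothèse, et $\Gc$ est parahorique, donc en particulier lisse. Le lemme précédent garantit alors que $G(\Kt) = G(K)\,\Gc(\Rt)$, c'est-à-dire que le double quotient est trivial ; c'est précisément ce qui rend le corollaire applicable. Son application fournit
$$\Ker\left(H^1_{\text{ét}}(R,\Gc) \rightarrow H^1_{\text{ét}}(K,G)\right) = \Ker\left(H^1(\Galnr,\Gc(\Rtnr)) \rightarrow H^1(\Galnr,G(\Ktnr))\right).$$

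J'exploiterais ensuite la structure de produit. Comme $\Galnr = \prod_{\mm} \Galnr_\mm$ agit composante par composante sur $\Gc(\Rtnr) = \prod_\mm \Gc(\Rtnr_\mm)$ et sur $G(\Ktnr) = \prod_\mm G(\Ktnr_\mm)$, le noyau de droite se décompose en le produit sur $\mm \in \Specm(R)$ des noyaux $\Ker\left(H^1(\Galnr_\mm,\Gc(\Rtnr_\mm)) \rightarrow H^1(\Galnr_\mm,G(\Ktnr_\mm))\right)$. Pour chaque $\mm$, on se trouve exactement dans le cadre hensélien de \cite{Article1} : le groupe $G_{\Kt_\mm}$ est semi-simple adjoint et quasi-déployé, et $\Gc_{\Rt_\mm}$ est un schéma en groupes parahorique. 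Il suffit alors d'invoquer \cite[Théorème 6.8.]{Article1} conjointement avec le calcul explicite des noyaux pour les stabilisateurs de facette de \cite[Théorème 6.15.]{Article1}, qui ensemble montrent que le noyau parahorique est trivial dans chaque facteur. En prenant le produit, le noyau total est trivial.

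Le principal obstacle ne réside pas dans le présent argument mais a déjà été surmonté en amont : la difficulté véritable se trouve dans la trivialité du double quotient (le lemme précédent, qui repose sur le théorème \ref{AdherenceRG} et la proposition de Prasad \ref{Prasad}) ainsi que dans le calcul hensélien de \cite{Article1}. Le seul point demandant du soin ici est la compatibilité de la décomposition facteur par facteur avec la structure parahorique, à savoir que chaque $\Gc_{\Rt_\mm}$ est bien parahorique ; cela découle de la définition même d'un schéma en groupes parahorique sur une base semi-locale (définition \ref{defGroupesBT}, lue composante par composante). Il convient également de constater que le passage au parahorique, plutôt qu'au stabilisateur de facette complet, est exactement ce qui force l'annulation des noyaux éventuellement non triviaux de \cite[Théorème 6.15.]{Article1}, ce qui constitue l'apport spécifique de \cite[Théorème 6.8.]{Article1}.
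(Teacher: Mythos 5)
Votre démonstration suit essentiellement la même route que l'article : trivialité du double quotient par le lemme précédent, réduction au cas hensélien via le corollaire \ref{ReductionCasHenselien}, puis invocation facteur par facteur des calculs de noyaux de \cite{Article1} (où, pour le cas parahorique, \cite[Théorème 6.8.]{Article1} suffit à lui seul, le Théorème 6.15 n'étant requis que pour le cas stabilisateur d'une facette). L'argument est correct et coïncide avec celui, laissé implicite, du texte.
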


\begin{thm}\label{ThmStabQSplit}
    Soit $G$ un groupe semi-simple adjoint sur $K$ et quasi-déployé sur $\Kt$. \linebreak Soit également $\Gc$, un schéma en groupes stabilisateur d'une facette de $G$. Le noyau :
    $$\Ker \left(H^1_{\normalfont \text{ét}}(R,\Gc) \rightarrow H^1_{\normalfont \text{ét}}(K,G)\right)$$
    est de cardinal $2^{\sum_{\mm\in \Specm(R)}k_\mm}$ où, pour tout $\mm\in \Specm(R)$, l'entier $k_\mm$ est majoré par le nombre de facteurs restriction de Weil d'un groupe absolument presque simple de type ${}^2D_n$ (pour $n\geq 4$) ou ${}^2A_{4n+3}$ (pour $n\geq 0$) déployé par une extension non ramifiée dans $G_{\Kt_\mm}$.
\end{thm}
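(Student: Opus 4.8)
The plan is to strip away the global arithmetic using Corollary~\ref{ReductionCasHenselien}, to decompose the remaining semi-local kernel into a product of henselian-local kernels, and then to quote the local computation \cite[Théorème 6.15.]{Article1} on each factor. First I would note that a stabilizer-of-facet scheme $\Gc$ is smooth and separated, so the hypotheses of Corollary~\ref{ReductionCasHenselien} are satisfied: $G$ is semi-simple adjoint over $K$ and quasi-split over $\Kt$, and $\Gc_K=G$. That corollary then identifies
$$\Ker\left(H^1_{\text{ét}}(R,\Gc)\rightarrow H^1_{\text{ét}}(K,G)\right)=\Ker\left(H^1_{\text{ét}}(\Rt,\Gc)\rightarrow H^1_{\text{ét}}(\Kt,G)\right),$$
so it suffices to compute the right-hand kernel.

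Next I would use that $R$ is semi-local, so that $\Specm(R)$ is finite and $\Spec(\Rt)=\coprod_{\mm}\Spec(\Rt_\mm)$ is a finite disjoint union. Étale cohomology sends this finite coproduct to a product, whence $H^1_{\text{ét}}(\Rt,\Gc)=\prod_{\mm}H^1_{\text{ét}}(\Rt_\mm,\Gc)$ and likewise over $\Kt$; since the kernel of a product of morphisms of pointed sets is the product of the kernels, this gives
$$\Ker\left(H^1_{\text{ét}}(\Rt,\Gc)\rightarrow H^1_{\text{ét}}(\Kt,G)\right)=\prod_{\mm\in\Specm(R)}\Ker\left(H^1_{\text{ét}}(\Rt_\mm,\Gc)\rightarrow H^1_{\text{ét}}(\Kt_\mm,G)\right).$$
Each factor is a kernel over a henselian discretely valued field: $\Kt_\mm$ is henselian, $\Rt_\mm$ is a henselian DVR, $G_{\Kt_\mm}$ is semi-simple adjoint and quasi-split, and $\Gc_{\Rt_\mm}$ is a stabilizer-of-facet scheme of $G_{\Kt_\mm}$ (the defining property being read off factor by factor).

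At this stage I would invoke \cite[Théorème 6.15.]{Article1}, which computes exactly this local kernel in the adjoint quasi-split henselian case: it has cardinality $2^{k_\mm}$ for an integer $k_\mm$ bounded by the number of Weil-restriction factors of $G_{\Kt_\mm}$ that are absolutely almost simple of type ${}^2D_n$ ($n\geq 4$) or ${}^2A_{4n+3}$ ($n\geq 0$) and split by an unramified extension. Multiplying the cardinalities over the finitely many $\mm$ then yields
$$\left|\Ker\left(H^1_{\text{ét}}(R,\Gc)\rightarrow H^1_{\text{ét}}(K,G)\right)\right|=\prod_{\mm}2^{k_\mm}=2^{\sum_{\mm}k_\mm},$$
which is the assertion.

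The substantive content is entirely concentrated in the local theorem \cite[Théorème 6.15.]{Article1}, so that the remaining work is essentially bookkeeping; the main obstacle, accordingly, is not in this proof but in that cited local classification. The two points that genuinely deserve care here are, first, that $\Gc_{\Rt_\mm}$ really is a stabilizer-of-facet scheme for $G_{\Kt_\mm}$, so that the local theorem applies verbatim — this rests on the factor-by-factor conventions of the companion paper and on $\Kt_\mm$, $K_\mm^h$ and $\Kh_\mm$ sharing the same residual Galois data $\Galnr_\mm$ — and, second, the finiteness of $\Specm(R)$, which is precisely what legitimizes converting the coproduct of spectra into a product of cohomology sets and the kernel into a product of local kernels.
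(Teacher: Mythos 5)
Your proposal is correct and follows essentially the same route as the paper: reduce to the semi-local henselian kernel via Corollary~\ref{ReductionCasHenselien} (whose hypotheses you rightly verify), split it factor by factor over the finitely many $\mm\in\Specm(R)$, and conclude by the local computation of \cite[Théorème 6.15.]{Article1}. The paper leaves exactly this chain implicit (see also the remark following the theorem, which spells out the same reduction), so there is nothing to add.
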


\begin{rmq}
    Bien entendu, il est possible de calculer explicitement ce noyau en se ramenant à $\Kt$ grâce au corollaire \ref{ReductionCasHenselien} puis en se réduisant au cas absolument presque simple grâce à la compatibilité du noyau au produit et à la restriction de Weil (cf. \cite[Lemme 6.9.]{Article1}) et en utilisant la table \cite[Table 2.]{Article1}.
\end{rmq}

\pagebreak

\appendix

\bibliographystyle{alpha}
\bibliography{bibliography}

\bigskip

\end{document}